\documentclass[12pt,reqno]{amsart}

\usepackage{amslfmt}
\usepackage{packages}
\usepackage{macros}

\begin{document}

\title[Rarefactions, Jumps, and Constants]{%
  When Do Riemann Solutions Consist of \\
  Rarefactions, Jumps, and Constants?
}

\author[Plohr]{Bradley J. Plohr}
\address{%
  Los Alamos, NM 87544
}
\email{bradley.j.plohr@gmail.com}

\author[Schecter]{Stephen Schecter}
\address{%
  Mathematics Department \\
  North Carolina State University \\
  Raleigh, NC 27695
}
\email{schecter@ncsu.edu}

\author[Marchesin]{Dan Marchesin}
\address{%
  Instituto de Matem\'atica Pura e Aplicada \\
  Estrada Dona Castorina 110 \\
  22460 Rio de Janeiro, RJ, Brazil
}
\email{marchesi@impa.br}

\date{\today}

\subjclass{35L65, 35L67}

\keywords{Conservation law, weak solution, self-similar, essential image}


\begin{abstract}
  A solution of a Riemann problem
  for a strictly hyperbolic system of conservation laws
  is traditionally expected to consist of
  rarefaction waves, jump discontinuities, and constant states.
  In this paper,
  we investigate whether a Riemann solution has this structure
  when the solution is only assumed to be measurable and essentially bounded.
  To discriminate continuous and discontinuous features
  in an $L^\infty$ solution,
  we introduce one-sided accumulation sets based on local essential images.
  Supposing that throughout a bounded open interval
  a solution is continuous
  in the essential image (\ESSIM{}) sense,
  we prove that it is a rarefaction wave
  if it is resonant (the characteristic speed equals $x/t$),
  and otherwise it is constant.
  Although an \ESSIM{} discontinuity
  might not be a jump discontinuity,
  we show that all \ESSIM{} accumulation states
  lie on a common Hugoniot locus and have the same speed.
  Anomalies are possible if there are limit points of \ESSIM{} discontinuities,
  but if the set of \ESSIM{} discontinuities is finite,
  then an $L^\infty$ Riemann solution has bounded variation
  and is composed of finitely many
  rarefaction waves, jump discontinuities, and constant states.
\end{abstract}

\maketitle

\section{Introduction}
\label{sec:introduction}

Ever since Riemann's seminal work~\cite{Rie60} on gas dynamics,
self-similar solutions of systems of conservation laws,
which we call Riemann solutions,
have been constructed by assembling rarefaction waves,
jump discontinuities, and constant states.
These features correspond to
mathematical properties of the solution profile.
In a rarefaction wave,
the profile varies continuously,
fanning out along characteristic lines.
A jump discontinuity in the profile propagates at a speed constrained by
the Rankine--Hugoniot condition.
In a constant state, the solution is steady and homogeneous.
If a Riemann solution is piecewise smooth,
then only these solution features occur.
In this paper,
we examine whether a Riemann solution has this structure
under the milder hypothesis that it is measurable and essentially bounded.

Dafermos~\cite{Daf08} considered this question
for strictly hyperbolic and genuinely nonlinear systems.
He assumed that the solution $v$,
as a function of the speed variable $\xi := x/t$,
has bounded variation (BV) and small oscillation.
By leveraging an integral equation satisfied by a solution,
he showed that $v$ is the sum of a jump function and a Lipschitz function.
He showed that the domain of $v$ consists of three disjoint sets:
the complement $\mC$ of the support of $d v/d\xi$, where $v$ is
locally constant;
the set $\mS$ of jump discontinuities of $v$, which satisfy the
Rankine--Hugoniot condition;
and the remaining set $\mW$, in which $v$ varies continuously,
$\xi$ equals a characteristic speed for $v(\xi)$,
and $d v/d\xi$ is a corresponding eigenvector almost everywhere (a.e.).

In his monograph~\cite[\S~9.1]{Daf26},
Dafermos applied much the same strategy to solutions
that are measurable and essentially bounded
instead of BV.
However, we find that extension to such $L^\infty$ solutions requires
different definitions for the sets $\mS$, $\mW$, and $\mC$,
as we explain in
Sec.~\ref{sec:continuity_for_weak_solutions}.

The present paper concerns strictly hyperbolic systems
that are not required to be genuinely nonlinear.
We treat a scale-invariant weak solution $v \in L^\infty$
without assuming that it is a small oscillation solution.
Our definitions of continuity and discontinuity
for members of $L^\infty$
are based on the concept of essential image (abbreviated \ESSIM{}).
Crucially, the following properties hold:
if $v$ is \ESSIM{} continuous at each point of an open interval,
then $v$ is equal a.e.\ to a function $\overline{v}$
that is continuous within this interval in the usual sense;
and at an \ESSIM{} discontinuity point,
$v$ has distinct left and right \ESSIM{} accumulation sets.

Associated to $v$ are \ESSIM{} variants of the sets $\mC$, $\mS$, and $\mW$.
Following Dafermos,
we show that the integral equation satisfied by $v$
implies that $v$ is constant within each connected component of $\mC$.
We extend this result:
within each bounded open interval contained in $\mW$,
$v$ is a BV rarefaction wave
(as generalized to accommodate loss of genuine nonlinearity).
Central to the proof is
locally straightening the rarefaction integral curves
of one of the characteristic families.

We also show that an \ESSIM{} discontinuity point $\xi \in \mS$
has at least two distinct accumulation states;
this result entails that $\mS \sqcup \mW$ is closed and $\mC$ is open.
All accumulation states for $v$ at $\xi \in \mS$
are restricted to lie on a common Hugoniot locus and have the same speed.
Moreover, if $\xi \in \mS$ is isolated,
then $v$ has unique left and right accumulation states at $\xi$,
\ie $v$ has an \ESSIM{} jump discontinuity.

Nonetheless, $v$ can have anomalous features,
such as limit points of $\mS$
or infinitely many connected components of $\mC$ or $\mW$.
For example~\cite{PloSchMar26b},
an infinite sequence of under-compressive shock waves
can accumulate at an \ESSIM{} discontinuity that is not a jump discontinuity.
However, if $\mS$ is finite,
a Riemann solution comprises finitely many
rarefaction waves, jump discontinuities, and constant states.

The system of conservation laws~\eqref{eq:conslaw}
often models a physical process
that exhibits nearly discontinuous features,
such as shock waves, contact discontinuities,
and under-compressive waves,
which are subject to physical principles beyond conservation.
To serve as idealizations of these features,
jump discontinuities must be constrained by admissibility conditions
that reflect these principles.
However, as our purpose is only to explore
the mathematical possibilities for self-similar solutions,
we do not impose any admissibility conditions on discontinuities.

For analysis involving $L^\infty$ functions,
we find it critical to employ \ESSIM{} accumulation sets,
which are local geometric objects that package measure-theoretic information
and respect $L^\infty$ equivalence classes.
While the essential image of a measurable function is a classical concept,
\ESSIM{} accumulation sets are rarely defined;
they have received systematic development
only for real-valued functions~\cite{FelWag98}.
In Sec.~\ref{sec:continuity_for_weak_solutions}
and Appendix~\ref{sec:essential_image},
we explain the \ESSIM{} approach
and compare it to two alternatives
that prove inadequate for $L^\infty$ conservation law theory.

In outline, the paper is as follows.
Section~\ref{section:systems_of_conservation_laws}
recalls definitions from the theory of conservation laws.
In Sec.~\ref{section:integral_equation},
we derive an integral equation satisfied by a self-similar solution;
it facilitates the proof of solution regularity and structure.
In Sec.~\ref{section:continuity}
and Appendix~\ref{sec:essential_image}
we define \ESSIM{} accumulation sets and continuity.
As shown in Sec.~\ref{section:constant_states},
the integral equation implies that a solution is constant
in any interval in which it is \ESSIM{} continuous and non-resonant.
This argument is extended to characterize rarefaction waves
in Sec.~\ref{section:rarefaction_waves},
the principal idea being to straighten the rarefaction integral curves
of the relevant family locally.
The integral equation also constrains
the accumulation states of an \ESSIM{} discontinuity point
to lie on a common Hugoniot locus and have the same speed.
Properties of discontinuities are derived in
Sec.~\ref{section:discontinuities}.
In Sec.~\ref{section:solution_structure},
we determine the structure of a self-similar solution
by partitioning the speed axis into four disjoint sets
corresponding to isolated \ESSIM{} jump discontinuities,
BV rarefaction waves, constant states,
and limit points of \ESSIM{} discontinuities.
Finally, in Sec.~\ref{section:conclusion},
we show that,
if a solution has no such limit points,
then the jump discontinuities, rarefaction waves, and constant states
are finite in number.

\section{Systems of conservation laws}
\label{section:systems_of_conservation_laws}

\subsection{Systems of conservation laws}
\label{sec:systems_of_conservation_laws}

A \emph{system of conservation laws} in one spatial dimension
is a system of partial differential equations of the form
\begin{equation}
  u_t + f(u)_x = 0.
  \label{eq:conslaw}
\end{equation}
Here $x \in \Rset$ is spatial position and $t > 0$ is time;
subscripts indicate partial derivatives.
The \emph{flux function} $f : \bbU \to \Rset^n$
is defined on an open subset
$\bbU \subseteq \Rset^n$, called \emph{state space};
elements of $\bbU$ are called \emph{states}.
The flux function $f$ is assumed to be $C^2$;
its derivative, the \emph{characteristic matrix},
is denoted
\begin{equation}
  A := D f.
  \label{eq:characteristic_matrix}
\end{equation}
The \emph{strictly hyperbolic region} $\bbU_{SH}\subseteq\bbU$
comprises states at which the characteristic matrix has real,
distinct eigenvalues.
We assume that that the convex hull of $\bbU_{SH}$ is contained in $\bbU$.
(This hypothesis is invoked in
Sec.~\ref{sec:integral_equation}.)

We consider a measurable and essentially bounded function $u$ that is
defined on the open upper half-plane in space--time,
$\bbH := \Rset \times (0, \infty)$,
takes values in the strictly hyperbolic region,
and solves system~\eqref{eq:conslaw}
in the weak sense.
More precisely,
we require that:
\begin{enumerate}
  \item[(a)] $u \in L^\infty(\bbH; \Rset^n)$;
  \item[(b)] the essential image of $u$
    (see Def.~\ref{def:essential_image})
    is contained in $\bbU_{SH}$; and
  \item[(c)] $u$ is a weak solution of system~\eqref{eq:conslaw}.
\end{enumerate}

For convenience, we adopt the following terminology:
$u$ is a \emph{state function} if it has properties~(a) and~(b).

\begin{lemma}
  \label{lem:well-defined}
  Suppose that $u$ is a state function.
  Then $f \comp u \in L^\infty(\bbH; \Rset^n)$.
\end{lemma}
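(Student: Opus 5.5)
The plan is to show that $f \comp u$ is measurable and essentially bounded. The key is to replace $u$ by an $L^\infty$-equivalent function whose values lie in a compact subset of $\bbU$, on which the continuous function $f$ is bounded.

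\textbf{Step 1: a compact set containing the essential image.} Let $K$ denote the essential image of $u$. The essential image of a measurable function is closed; I expect this is recorded with Def.~\ref{def:essential_image}. Since $u \in L^\infty$, there is $M$ with $|u| \le M$ a.e., so $K$ lies in the closed ball of radius $M$. Hence $K$ is compact. By hypothesis~(b), $K \subseteq \bbU_{SH} \subseteq \bbU$.

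\textbf{Step 2: $u$ takes values in $K$ almost everywhere.} I expect this to be the main point requiring care. Every point $p \notin K$ has an open neighborhood $B_p$ with $\lambda(u^{-1}(B_p)) = 0$. Since $\Rset^n$ is second countable, countably many such neighborhoods cover $\Rset^n \setminus K$ (Lindel\"of). Therefore $u^{-1}(\Rset^n \setminus K)$ is a countable union of null sets, hence null.

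Fix a point $k_0 \in K$; if $K$ were empty, Step 2 would make $\bbH$ null, which is impossible. Define $\tilde u := u$ where $u \in K$, and $\tilde u := k_0$ elsewhere. Then $\tilde u$ is measurable, equals $u$ a.e., and takes values in $K \subseteq \bbU$. So $f \comp u$ is well defined as the $L^\infty$ class of $f \comp \tilde u$.

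\textbf{Step 3: conclusion.} The function $f \comp \tilde u$ is measurable, being the composition of a continuous (indeed $C^2$) map with a measurable one. It is bounded by $\max_K |f|$, which is finite because $K$ is compact and $f$ is continuous. Finally, changing $u$ on a null set changes $f \comp \tilde u$ only on a null set, so the class $f \comp u \in L^\infty(\bbH;\Rset^n)$ is independent of the representative chosen.
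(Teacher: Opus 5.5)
Your proof is correct and follows the paper's argument. In both, the essential image of $u$ is a compact subset of $\bbU_{SH}$ containing $u(x,t)$ for a.e.\ $(x,t)$, so $f\comp u$ is defined a.e., is measurable by continuity of $f$, and is essentially bounded by the maximum of $\abs{f}$ on that compact set. The only differences are that you reprove inline, via the countable-base covering argument, what the paper cites from Lemma~\ref{lem:essential_image}, and that you make the null-set modification $\tilde u$ explicit.
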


\begin{proof}
  By property~(b) and Lemma~\ref{lem:essential_image},
  $u(x, t) \in \bbU_{SH}$ for almost every $(x, t) \in \bbH$.
  The domain of $f$ contains $\bbU_{SH}$,
  so $f\bigl(u(x, t)\bigr)$ is defined for almost every $(x, t) \in \bbH$.
  As $f$ is continuous,
  $f \comp u$ is measurable.
  By property~(a) and Lemma~\ref{lem:essential_image},
  the essential image of $u$ is compact.
  Because $f$ is bounded on this compact set,
  $f \comp u$ is essentially bounded.
\end{proof}

Let $u$ be a state function.
Then for any \emph{test function} $\phi \in C^1_c(\bbH; \Rset^n)$,
$\phi_t \cdot u$ and $\phi_x \cdot (f\comp u)$ are integrable.
(The dot denotes the Euclidean inner product in $\Rset^n$.)
Therefore, the following definition makes sense.

\begin{definition}
  \label{def:weak_solution}
  A \emph{weak solution}
  of system~\eqref{eq:conslaw}
  is a state function $u$ such that
  \begin{equation}
    \int_0^\infty \int_{-\infty}^\infty
    \bigl[\phi_t\cdot u + \phi_x\cdot f(u)\bigr] \,d x\,d t = 0
    \ \text{for all $\phi \in C^1_c(\bbH; \Rset^n)$.}
    \label{eq:weak_condition}
  \end{equation}
\end{definition}

\begin{remark}
  \label{rem:no_initial_condition}
  We are not concerned with initial conditions
  for system~\eqref{eq:conslaw},
  so we confine the support of the test function $\phi$
  to the space-time region $t > 0$.
  \qed
\end{remark}

\subsection{Self-similar solutions}
\label{sec:self-similar_solutions}

Our focus is on solutions that are scale-invariant.
A function $w : \bbH \to \bbU$ is scale-invariant
if $w(\alpha\,x, \alpha\,t) = w(x, t)$ for all $\alpha > 0$.
In particular, taking $t = 1/\alpha$ shows that $w(x, t) = w(x/t, 1)$.

From a different perspective,
a scale-invariant function $w$ is self-similar:
for all $\alpha > 0$, its profile $w(x, \alpha\,t)$ at time $\alpha\,t$
coincides with the spatially rescaled profile $w(x/\alpha, t)$ at time $t$.

Adapting to bounded measurable functions,
we say that $u \in L^\infty(\bbH; \Rset^n)$ is \emph{self-similar}
provided there exists $v \in L^\infty(\Rset; \Rset^n)$ such that
\begin{equation}
  \text{$u(x, t) = v(x/t)$ for almost every $(x, t) \in \bbH$}.
  \label{eq:u_in_terms_of_v}
\end{equation}
Equivalently,
\begin{equation}
  \text{$u(\xi\,\tau, \tau) = v(\xi)$
  for almost every $(\xi, \tau)\in \Rset \times (0, \infty)$.}
  \label{eq:u_in_terms_of_v_alt}
\end{equation}
In turn, $v$ can be recovered from $u$ as follows.
Let $\chi \in C^1_c\bigl((0, \infty)\bigr)$ have integral equal to~1.
Then $(\xi, \tau) \mapsto u(\xi\,\tau, \tau)\,\chi(\tau)$ is locally integrable
over $\Rset \times (0, \infty)$.
By Fubini's theorem~\cite[Theorem~1.22]{EvaGar15},
the function
\begin{equation}
  \xi \mapsto \int_0^\infty u(\xi\,\tau, \tau)\,\chi(\tau)\,d\tau
  \label{eq:v_in_terms_of_u}
\end{equation}
is measurable on $\Rset$,
and by Eq.~\eqref{eq:u_in_terms_of_v_alt},
it equals $v(\xi)$ for almost every $\xi \in \Rset$.

We define $v$ to be a \emph{reduced state function}
provided $v \in L^\infty(\Rset; \Rset^n)$ and $\essim v \subseteq \bbU_{SH}$.

\begin{lemma}
  \label{lem:self-similar_integral_equality}
  Suppose $u$ is a self-similar state function
  and $v$ is the reduced state function related to $u$ by
  Eq.~\eqref{eq:u_in_terms_of_v}.
  Let $\phi \in C^1_c(\bbH; \Rset^n)$, and for $\xi \in \Rset$ set
  \begin{equation}
    \psi(\xi) := \int_0^\infty \phi(\xi\,\tau, \tau)\,d\tau.
    \label{eq:test_function_map}
  \end{equation}
  Then $\psi \in C^1_c(\Rset; \Rset^n)$ and
  \begin{equation}
    \int_0^\infty \int_{-\infty}^\infty
    \bigl[\phi_t\cdot u + \phi_x\cdot f(u)\bigr] \,d x\,d t
    = \int_{-\infty}^\infty \left\{\frac{d\psi}{d\xi}\cdot
    \left[-\xi\,v + f(v)\right] - \psi\cdot v\right\} d\xi.
    \label{eq:equality_self-similar_integrals}
  \end{equation}
\end{lemma}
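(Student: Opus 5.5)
The plan is to change variables $(x,t) = (\xi\tau, \tau)$ in the left-hand side of Eq.~\eqref{eq:equality_self-similar_integrals}. This map is a $C^1$ diffeomorphism of $\Rset\times(0,\infty)$ onto $\bbH$ with Jacobian $dx\,dt = \tau\,d\xi\,d\tau$. Under this change, $u(\xi\tau,\tau) = v(\xi)$ for almost every $(\xi,\tau)$ by Eq.~\eqref{eq:u_in_terms_of_v_alt}, and likewise $f(u) = f(v)$ almost everywhere.

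\textbf{Regularity of $\psi$.} I would first check that $\psi \in C^1_c(\Rset;\Rset^n)$. Since $\phi$ has compact support in $\bbH$, there are $0<a<b$ and $R>0$ with $\operatorname{supp}\phi \subseteq [-R,R]\times[a,b]$. Then $\phi(\xi\tau,\tau)=0$ unless $\tau\in[a,b]$ and $|\xi|\le R/a$, so $\psi$ vanishes for $|\xi|>R/a$. Differentiation under the integral sign over the compact interval $[a,b]$ is justified because the integrand is $C^1$ in $\xi$ with bounded derivative, and it gives
\[
\psi'(\xi) = \int_0^\infty \tau\,\phi_x(\xi\tau,\tau)\,d\tau ,
\]
which is continuous in $\xi$.

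\textbf{Transforming the integrand.} Next I would rewrite $\phi_t$ in the new variables by setting $\Phi(\xi,\tau) := \phi(\xi\tau,\tau)$. The chain rule gives
\[
\Phi_\tau = \xi\,\phi_x + \phi_t, \qquad \Phi_\xi = \tau\,\phi_x ,
\]
so that
\[
\tau\,\phi_t = \tau\,\Phi_\tau - \xi\,\Phi_\xi .
\]
The transformed left-hand side therefore equals
\[
\int\!\!\int \bigl[(\tau\Phi_\tau - \xi\Phi_\xi)\cdot v(\xi) + \Phi_\xi\cdot f(v(\xi))\bigr]\,d\tau\,d\xi .
\]
The integrand is bounded and compactly supported, so Fubini's theorem allows integrating in $\tau$ first for almost every $\xi$. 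The $\tau$-integrals are evaluated as follows:
\begin{itemize}
\item $\int_0^\infty \Phi_\xi\,d\tau = \psi'(\xi)$, by the same differentiation under the integral sign as above;
\item integration by parts with compact support in $\tau$ gives $\int_0^\infty \tau\,\Phi_\tau\,d\tau = -\int_0^\infty \Phi\,d\tau = -\psi(\xi)$.
\end{itemize}
Substituting these yields the integrand $\psi'\cdot[-\xi v + f(v)] - \psi\cdot v$, which is the right-hand side of Eq.~\eqref{eq:equality_self-similar_integrals}.

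\textbf{Main obstacle.} There is no deep difficulty here. The step needing the most care is the measure-theoretic bookkeeping. One must confirm that the null set on which $u(x,t)\neq v(x/t)$ maps to a null set under the diffeomorphism, and that the composition $f\circ v$ is defined almost everywhere. Both follow from the $C^1$ change-of-variables theorem and Lemma~\ref{lem:well-defined} applied to $v$ via $\essim v\subseteq\bbU_{SH}$. One must also confirm integrability so that Fubini's theorem applies, which holds because the integrand is bounded and compactly supported.
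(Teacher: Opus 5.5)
Your proposal is correct and follows essentially the same route as the paper: the change of variables $(x,t)=(\xi\tau,\tau)$ with Jacobian $\tau$, differentiation under the integral sign to obtain $d\psi/d\xi=\int_0^\infty \tau\,\phi_x(\xi\tau,\tau)\,d\tau$, and the rewriting $\tau\phi_t=\tau\,\frac{d}{d\tau}\phi(\xi\tau,\tau)-\xi\,\tau\phi_x$, followed by integration by parts in $\tau$. Your explicit support bound $|\xi|\le R/a$ and your remarks on null sets and Fubini spell out details the paper leaves implicit.
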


\begin{proof}
  In the integral on the left-hand side of
  Eq.~\eqref{eq:equality_self-similar_integrals},
  change variables of integration from $(x, t) \in \bbH$
  to $(\xi, \tau) := (x/t, t) \in \Rset \times (0, \infty)$,
  replacing $(x, t)$ by $(\xi\,\tau, \tau)$,
  $u(\xi\,\tau, \tau)$ by $v(\xi)$,
  and $d x\,d t$ by $\tau\,d\xi\,d\tau$
  (the Jacobian determinant is $\tau$).
  The result is
  \begin{equation}
    \begin{split}
      & \int_0^\infty \int_{-\infty}^\infty
      \left[\phi_t(\xi\,\tau, \tau) \cdot v(\xi)
        + \phi_x(\xi\,\tau, \tau)
      \cdot f\bigl(v(\xi)\bigr)\right] \tau\,d\xi\,d\tau \\
      & \qquad = \int_{-\infty}^\infty
      \left[\int_0^\infty \tau\,\phi_t(\xi\,\tau, \tau)\,d\tau \cdot v(\xi)
        + \int_0^\infty \tau\,\phi_x(\xi\,\tau, \tau)\,d\tau
      \cdot f\bigl(v(\xi)\bigr)\right] d\xi,
    \end{split}
    \label{eq:weak_integral_transformed}
  \end{equation}
  where we have switched order of integration.
  The two integrals with respect to $\tau$ are functions of $\xi$
  that we relate to the function $\psi : \Rset \to \Rset^n$ defined
  by Eq.~\eqref{eq:test_function_map}.

  First, $\psi \in C^1_c(\Rset; \Rset^n)$.
  Indeed,
  the Leibniz integral rule
  implies that $\psi$ is differentiable and its derivative
  \begin{equation}
    \frac{d\psi}{d\xi} = \int_0^\infty \tau\,\phi_x(\xi\,\tau, \tau)\,d\tau
    \label{eq:derivative_of_psi}
  \end{equation}
  is continuous.
  Moreover, $\psi$ has compact support:
  because $\supp \phi$ is a compact subset of $\bbH$,
  the set $\{\, x/t \,:\, (x, t) \in \supp \phi \,\}$
  is contained in some compact interval $[\xi_L, \xi_R]$;
  and if $\xi < \xi_L$ or $\xi > \xi_R$,
  then $\phi(\xi\,\tau, \tau) = 0$ for all $\tau > 0$,
  so that $\psi(\xi) = 0$.

  By Eq.~\eqref{eq:derivative_of_psi},
  the second integral with respect to $\tau$ inside
  the integral~\eqref{eq:weak_integral_transformed}
  is $d\psi/d\xi$.
  Also,
  \begin{equation}
    \begin{split}
      \int_0^\infty \tau\,\phi_t(\xi\,\tau, \tau)\,d\tau
      & = \int_0^\infty \tau\,\frac{d}{d\tau}\phi(\xi\,\tau, \tau)\,d\tau
      - \xi \int_0^\infty \tau\,\phi_x(\xi\,\tau, \tau)\,d\tau             \\
      & = \tau\,\phi(\xi\,\tau, \tau) \Bigl|_{\tau = 0}^\infty
      -\psi - \xi\,\frac{d\psi}{d\xi}
      = -\psi - \xi\,\frac{d\psi}{d\xi},
    \end{split}
  \end{equation}
  where we have integrated by parts
  and used Eq.~\eqref{eq:derivative_of_psi}
  again.
  Thus, integral~\eqref{eq:weak_integral_transformed}
  coincides with the integral on the right-hand side of
  Eq.~\eqref{eq:equality_self-similar_integrals}.
\end{proof}

Consequently, for a self-similar state function,
condition~\eqref{eq:weak_condition}
amounts to the following condition on its corresponding
reduced state function~\cite[\S~9.1]{Daf26}.

\begin{proposition}
  \label{prop:Self-Similar_Weak_Condition}
  Suppose $u$ is a self-similar state function
  and $v$ is the reduced state function related to $u$ by
  Eq.~\eqref{eq:u_in_terms_of_v}.
  Then $u$ is a weak solution of system~\eqref{eq:conslaw}
  if and only if
  \begin{equation}
    \int_{-\infty}^\infty
    \left\{\frac{d\psi}{d\xi}\cdot
    \left[-\xi\,v + f(v)\right] - \psi\cdot v\right\} d\xi = 0
    \ \text{for all $\psi \in C^1_c(\Rset; \Rset^n)$.}
    \label{eq:self-similar_weak_condition}
  \end{equation}
\end{proposition}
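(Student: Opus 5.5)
The plan is to deduce both implications from Lemma~\ref{lem:self-similar_integral_equality}. The ``only if'' direction additionally needs a way to realize every $\psi \in C^1_c(\Rset; \Rset^n)$ in the form~\eqref{eq:test_function_map}.

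\emph{If.} Suppose~\eqref{eq:self-similar_weak_condition} holds. Take any $\phi \in C^1_c(\bbH; \Rset^n)$ and let $\psi$ be defined by Eq.~\eqref{eq:test_function_map}. Lemma~\ref{lem:self-similar_integral_equality} says that $\psi \in C^1_c(\Rset; \Rset^n)$. It also says that the left-hand side of~\eqref{eq:weak_condition} equals the right-hand side of~\eqref{eq:equality_self-similar_integrals}, and by hypothesis this vanishes. Hence $u$ is a weak solution.

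\emph{Only if.} Suppose $u$ is a weak solution and let $\psi \in C^1_c(\Rset; \Rset^n)$ be arbitrary. Fix a scalar $\chi \in C^1_c\bigl((0,\infty)\bigr)$ with $\int_0^\infty \chi(\tau)\,d\tau = 1$, and define
\[
  \phi(x,t) := \psi(x/t)\,\chi(t).
\]
Then $\phi$ is $C^1$ on $\bbH$. Its support lies in $\{(x,t) : t \in \supp\chi,\ x/t \in \supp\psi\}$. This set is compact in $\bbH$, because $t$ ranges over a compact subset of $(0,\infty)$ and $|x| \le t \max|\xi|$ is bounded. So $\phi \in C^1_c(\bbH; \Rset^n)$. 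Moreover
\[
  \int_0^\infty \phi(\xi\tau,\tau)\,d\tau = \psi(\xi)\int_0^\infty \chi(\tau)\,d\tau = \psi(\xi),
\]
so the function associated to $\phi$ by~\eqref{eq:test_function_map} is exactly the given $\psi$. Applying Lemma~\ref{lem:self-similar_integral_equality} to this $\phi$, the right-hand side of~\eqref{eq:equality_self-similar_integrals} equals the left-hand side, which is zero because $u$ is a weak solution. This gives~\eqref{eq:self-similar_weak_condition}.

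The only real point is the surjectivity of the map $\phi \mapsto \psi$ onto $C^1_c(\Rset;\Rset^n)$. It is handled by the product construction above, and the key check is that the support of $\phi$ is compact in the open half-plane. Everything else is supplied by Lemma~\ref{lem:self-similar_integral_equality}.
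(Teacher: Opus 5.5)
Your proof is correct and follows essentially the same approach as the paper: the ``if'' direction comes directly from Lemma~\ref{lem:self-similar_integral_equality}, and the ``only if'' direction uses the same product test function $\phi(x,t) = \chi(t)\,\psi(x/t)$ with $\int_0^\infty \chi(\tau)\,d\tau = 1$. Your explicit check that $\supp\phi$ is compact in $\bbH$ supplies a detail the paper only asserts.
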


\begin{proof}
  Suppose that $v$ satisfies
  condition~\eqref{eq:self-similar_weak_condition}.
  To demonstrate that $u$ is a weak solution,
  consider any $\phi \in C^1_c(\bbH; \Rset^n)$.
  With $\psi$ defined by Eq.~\eqref{eq:test_function_map},
  the integral in Eq.~\eqref{eq:self-similar_weak_condition}
  vanishes.
  Therefore,
  equality~\eqref{eq:equality_self-similar_integrals}
  implies that condition~\eqref{eq:weak_condition} holds for $\phi$.

  Conversely,
  suppose that $u$ is a weak solution and specify $\psi \in
  C^1_c(\Rset; \Rset^n)$.
  We take $\chi \in C^1_c\bigl((0, \infty)\bigr)$ to have integral equal to~1
  and set $\phi(x, t) := \chi(t)\,\psi(x/t)$;
  then $\phi \in C^1_c(\bbH; \Rset^n)$ and
  Eq.~\eqref{eq:test_function_map}
  holds.
  As the integral in condition~\eqref{eq:weak_condition}
  vanishes for this choice of $\phi$,
  the integral in condition~\eqref{eq:self-similar_weak_condition}
  vanishes for the specified $\psi$.
\end{proof}

\begin{definition}
  \label{def:self-similar_weak_solution}
  A reduced state function $v$
  is a \emph{self-similar weak solution} of system~\eqref{eq:conslaw}
  if condition~\eqref{eq:self-similar_weak_condition}
  holds.
\end{definition}

\subsection{Features in solutions}
\label{sec:features_in_solutions}

The purpose of this section is to motivate and orient our investigation
by describing the principal features in a self-similar weak solution $v$.
To simplify the discussion,
we make the strong regularity assumption that $v$ is piecewise $C^1$.
We identify three features,
namely, smooth rarefaction waves, isolated jump discontinuities, and
constant states,
which traditionally serve as building blocks for constructing Riemann solutions.
However,
in the rest of the paper,
we omit this regularity assumption
and investigate when (generalizations of) these features
occur in $L^\infty$ solutions.

\subsubsection*{Smooth solutions}
Suppose that $v$ is $C^1$ on an open interval $J$.
Integrating by parts in Eq.~\eqref{eq:self-similar_weak_condition}
and using the definition $A := D f$ shows that
\begin{equation}
  \int_{-\infty}^\infty
  \psi\cdot \left[-\xi\,I + A(v)\right] \frac{d v}{d\xi}\,d\xi = 0
  \label{eq:smooth_self-similar_condition}
\end{equation}
for all test functions $\psi \in C^1_c(\Rset; \Rset^n)$ with support in $J$.
As $v$ is $C^1$,
\begin{equation}
  \left[-\xi\,I + A(v)\right] \frac{d v}{d\xi} = 0
  \label{eq:eigenvalue_eigenvector}
\end{equation}
for all $\xi \in J$.
For each point $\xi_\star \in J$,
there are three cases.
\begin{enumerate}
  \item[(1)] $\det \left[-\xi_\star\,I +
    A\bigl(v(\xi_\star)\bigr)\right] \ne 0$:
    For all $\xi$ in an open subinterval of $J$ containing $\xi_\star$,
    the matrix $-\xi\,I + A\bigl(v(\xi)\bigr)$ is invertible,
    hence $d v / d\xi = 0$ throughout that subinterval.
    Consequently,
    $v$ is a \emph{constant state} in this subinterval.

  \item[(2)] $d v / d\xi \ne 0$ at $\xi_\star$:
    For all $\xi$ in a subinterval of $J$ containing $\xi_\star$,
    $d v / d\xi \ne 0$ at $\xi$.
    By Eq.~\eqref{eq:eigenvalue_eigenvector},
    $d v/d\xi$ at $\xi$ is an eigenvector of $A\bigl(v(\xi)\bigr)$
    and $\xi$ is the corresponding eigenvalue.
    As explained in more detail in
    Sec.~\ref{sec:characteristic_speed},
    there exist a neighborhood $\Uopen$ of $u_\star := v(\xi_\star)$
    and $C^1$ maps $\lambda : \Uopen \to \Rset$ and $r : \Uopen
    \to \Rset^n$
    such that $\lambda(u_\star) = \xi_\star$
    and $r(u)$ is an eigenvector of $A(u)$ with eigenvalue $\lambda(u)$
    for all $u \in \Uopen$.
    Necessarily, the \emph{resonance condition}
    \begin{equation}
      \xi = \lambda\bigl(v(\xi)\bigr)
      \label{eq:resonance}
    \end{equation}
    holds for all $\xi$ in a possibly smaller subinterval containing
    $\xi_\star$,
    and $\xi \mapsto v(\xi)$ is a reparametrization of an integral curve of $r$.
    Within this smaller subinterval,
    the solution $v$ is called a \emph{smooth rarefaction wave}.

  \item[(3)] $\det \left[-\xi_\star\,I + A\bigl(v(\xi_\star)\bigr)\right] = 0$
    and $d v / d\xi = 0$ at $\xi_\star$:
    Such a point lies between a constant state
    and either a smooth rarefaction wave
    or another constant state
    (see Prop.~\ref{prop:E}).
\end{enumerate}

\subsubsection*{Discontinuous solutions}
Suppose that $v$ has an isolated jump discontinuity at $\xi_\star$,
in that there exist $C^1$ solutions $v_\ell$ on $(-\infty, \xi_\star)$
and $v_r$ on $(\xi_\star, \infty)$ such that
\begin{equation}
  v(\xi) =
  \begin{cases}
    v_\ell(\xi) & \text{if $\xi < \xi_\star$,} \\
    v_r(\xi)    & \text{if $\xi > \xi_\star$}
  \end{cases}
  \label{eq:single_jump}
\end{equation}
and the limits
\begin{align}
  \text{$u^- := \lim_{\zeta \incr \xi_\star} v_\ell(\zeta)$
  \quad and \quad $u^+ := \lim_{\zeta \decr \xi_\star} v_r(\zeta)$}
  \label{eq:one-sided_limits}
\end{align}
exist with $u^+ \ne u^-$.
Breaking the integral in condition~\eqref{eq:self-similar_weak_condition}
into integrals over $(-\infty, \xi_\star)$ and $(\xi_\star, \infty)$
and integrating by parts reduces it to
\begin{equation}
  -\psi(\xi_\star)\cdot\left\{-\xi_\star\,u^+ + f(u^+)
  - \left[-\xi_\star\,u^- + f(u^-)\right]\right\} = 0.
\end{equation}
Because the value $\psi(\xi_\star)$ is arbitrary,
a function of the form~\eqref{eq:single_jump}
is a weak solution if and only if the \emph{Rankine--Hugoniot condition}
\begin{equation}
  -s\,(u^+ - u^-) + f(u^+) - f(u^-) = 0
  \label{eq:Rankine--Hugoniot}
\end{equation}
holds when $s = \xi_\star$.

\begin{definition}
  \label{def:Rankine--Hugoniot}
  An \emph{R-H jump} is a triple $(u^-, s, u^+)$ with $u^+ \ne u^-$
  such that the
  Rankine--Hugoniot condition~\eqref{eq:Rankine--Hugoniot}
  holds.
  Here $u^-$ is the \emph{left state},
  $u^+$ is the \emph{right state},
  and $s$ is the \emph{propagation speed}.
\end{definition}

\section{Integral equation}
\label{section:integral_equation}

Dafermos~\cite{Daf08,Daf26}
has derived an integral equation satisfied by
self-similar weak solutions
that is a powerful tool for establishing structure and regularity of solutions.

\subsection{Dafermos function}
\label{sec:Dafermos_function}

\begin{definition}
  \label{def:Dafermos_function}
  Suppose $v$ is a reduced state function.
  With $\xi_0 \in \Rset$ arbitrary but fixed,
  the \emph{Dafermos function} $\mD \in L^\infty_\mathrm{loc}(\Rset;
  \Rset^n)$ is defined by
  \begin{equation}
    \mD(\xi) := -\xi\,v(\xi) + f\bigl(v(\xi)\bigr)
    + \int_{\xi_0}^\xi v(\xi')\,d\xi' \ \text{for almost every $\xi \in \Rset$.}
    \label{eq:Dafermos_function}
  \end{equation}
\end{definition}

A measurable function is called \emph{essentially constant}
if it equals a constant function a.e.

\begin{theorem}[Dafermos~\cite{Daf08,Daf26}]\label{th:Dafermos}
  Suppose that $v$ is a self-similar state function.
  Then $v$ is a weak solution of ~\eqref{eq:conslaw} if and only if
  $\mD$ is essentially constant.
\end{theorem}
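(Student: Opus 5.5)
By Proposition~\ref{prop:Self-Similar_Weak_Condition} and Definition~\ref{def:self-similar_weak_solution}, it suffices to show that condition~\eqref{eq:self-similar_weak_condition} holds if and only if $\mD$ is essentially constant. The plan is to rewrite the integral in~\eqref{eq:self-similar_weak_condition} as $\int \frac{d\psi}{d\xi}\cdot\mD\,d\xi$. Then the weak condition says exactly that the distributional derivative of $\mD$ vanishes, and we invoke the classical fact that a locally integrable function with zero distributional derivative is essentially constant.

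\textbf{Step 1: rewrite the integral.} Set $V(\xi) := \int_{\xi_0}^\xi v(\xi')\,d\xi'$. Since $v \in L^\infty$, $V$ is Lipschitz, hence absolutely continuous, with $V' = v$ a.e. For $\psi \in C^1_c(\Rset;\Rset^n)$, integration by parts for absolutely continuous functions gives
\begin{equation*}
  \int_{-\infty}^\infty \frac{d\psi}{d\xi}\cdot V\,d\xi = -\int_{-\infty}^\infty \psi\cdot v\,d\xi ,
\end{equation*}
with no boundary terms because $\psi$ has compact support. Hence the integrand $\frac{d\psi}{d\xi}\cdot[-\xi v + f(v)] - \psi\cdot v$ has integral equal to $\int \frac{d\psi}{d\xi}\cdot\mD\,d\xi$. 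Here $f(v)$ is defined a.e.\ and is in $L^\infty$, by the argument of Lemma~\ref{lem:well-defined} applied to $v$, so $\mD \in L^\infty_{\mathrm{loc}}$ and every integral above is finite. Condition~\eqref{eq:self-similar_weak_condition} therefore becomes
\begin{equation*}
  \int \psi'\cdot\mD\,d\xi = 0 \quad\text{for all } \psi\in C^1_c .
\end{equation*}

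\textbf{Step 2: the easy direction.} If $\mD = c$ a.e., then $\int \psi'\cdot c\,d\xi = 0$, because $\psi$ has compact support.

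\textbf{Step 3: the converse, which is the main step.} Work componentwise, taking $\psi = \eta e_i$ with $\eta$ scalar, so each component $\mD_i$ satisfies $\int \eta'\mD_i = 0$ for all $\eta\in C^1_c(\Rset)$. Fix $\rho\in C^1_c$ with $\int\rho = 1$. Any $\theta \in C_c(\Rset)$ can be written as $\theta = (\int\theta)\rho + \eta'$, where $\eta(\xi) := \int_{-\infty}^\xi [\theta - (\int\theta)\rho]$. This $\eta$ is $C^1$ and compactly supported, because the integrand has zero total integral. Hence
\begin{equation*}
  \int\theta\,\mD_i = \Bigl(\int\theta\Bigr) c_i, \qquad c_i := \int\rho\,\mD_i .
\end{equation*}
So $\int\theta\,(\mD_i - c_i) = 0$ for all $\theta\in C_c$. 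Since $C_c$ is dense in $L^1$ of each bounded interval, this gives $\mD_i = c_i$ a.e.

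The only delicate points are the measurability and integrability bookkeeping, which Lemma~\ref{lem:well-defined} already handles, and this du Bois-Reymond-type lemma, which the construction of $\eta$ settles. Finally, changing $\xi_0$ alters $\mD$ only by an additive constant, so the conclusion does not depend on the choice of $\xi_0$.
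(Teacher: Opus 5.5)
Your proposal is correct and follows the paper's proof. The paper also integrates by parts against the Lipschitz primitive $w(\xi) = \int_{\xi_0}^\xi v(\xi')\,d\xi'$ to turn condition~\eqref{eq:self-similar_weak_condition} into $\int \frac{d\psi}{d\xi}\cdot\mD\,d\xi = 0$ for all $\psi \in C^1_c$; the only difference is that the paper cites the standard fact that a locally integrable function with vanishing distributional derivative is a.e.\ constant, whereas you prove it directly with a valid du Bois-Reymond construction.
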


\begin{proof}
  By Prop.~\ref{prop:Self-Similar_Weak_Condition},
  $v$ is a weak solution
  if and only if Eq.~\eqref{eq:self-similar_weak_condition}
  holds.
  Since $v \in L^\infty(\Rset; \Rset^n)$,
  the function
  \begin{equation}
    w(\xi) := \int_{\xi_0}^\xi v(\xi')\,d\xi'
    \label{eq:integral_term}
  \end{equation}
  is Lipschitz continuous and $d w / d\xi = v$ a.e.
  Let $\psi \in C^1_c(\Rset; \Rset^n)$.
  Then
  \begin{equation}
    -\int_{-\infty}^\infty \psi\cdot v\,d\xi
    = -\int_{-\infty}^\infty \psi\cdot \frac{d w}{d \xi}\,d\xi
    = \int_{-\infty}^\infty \frac{d\psi}{d\xi}\cdot w\,d\xi,
  \end{equation}
  where we have used integration by parts in the second step.
  The integral in condition~\eqref{eq:self-similar_weak_condition}
  can be related to an integral involving the Dafermos function as follows:
  \begin{equation}
    \int_{-\infty}^\infty
    \left\{\frac{d\psi}{d\xi}\cdot [-\xi\,v + f(v)]
    - \psi\cdot v\right\} d\xi = \int_{-\infty}^\infty
    \frac{d\psi}{d\xi}\cdot \left[-\xi\,v + f(v) + w\right] d\xi
    = \int_{-\infty}^\infty \frac{d\psi}{d\xi}\cdot \mD\,d\xi.
  \end{equation}
  Thus,
  condition~\eqref{eq:self-similar_weak_condition}
  holds if and only if
  \begin{equation}
    \int_{-\infty}^\infty \frac{d\psi}{d\xi}\cdot \mD\,d\xi = 0
    \ \text{for all $\psi \in C^1_c(\Rset; \Rset^n)$.}
    \label{eq:Dafermos_self-similar_weak_condition}
  \end{equation}
  If we view $\mD \in L^\infty_\mathrm{loc}(\Rset; \Rset^n)$ as a distribution,
  this equation says that the distributional derivative of $\mD$ vanishes.
  Equivalently, $\mD$ equals a constant $\mD_0$ a.e.~\cite[Theorem~3.4]{Car25}.
\end{proof}

\begin{corollary}[Dafermos~\cite{Daf08,Daf26}]
  \label{co:Dafermos}
  Suppose that $v$ is a reduced state function.
  If $v$ is a self-similar weak solution of system~\eqref{eq:conslaw},
  then
  \begin{equation}
    \text{$-\xi\,v + f(v) = \mF$ a.e.}
    \label{eq:moving_frame_flux_relation}
  \end{equation}
  with $\mF : \Rset \to \Rset^n$ being a Lipschitz continuous function.
\end{corollary}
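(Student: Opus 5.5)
The corollary follows directly from Theorem~\ref{th:Dafermos} together with the Lipschitz continuity of the integral term $w$ from Eq.~\eqref{eq:integral_term}.

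First, since $v$ is a self-similar weak solution, Theorem~\ref{th:Dafermos} shows that the Dafermos function $\mD$ equals some constant $\mD_0 \in \Rset^n$ for almost every $\xi \in \Rset$. Spelled out with Definition~\ref{def:Dafermos_function}, this reads
\begin{equation*}
  -\xi\,v(\xi) + f\bigl(v(\xi)\bigr) + \int_{\xi_0}^\xi v(\xi')\,d\xi' = \mD_0
  \ \text{for almost every $\xi \in \Rset$.}
\end{equation*}

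Next, I will define
\begin{equation*}
  \mF(\xi) := \mD_0 - \int_{\xi_0}^\xi v(\xi')\,d\xi' = \mD_0 - w(\xi)
\end{equation*}
for every $\xi \in \Rset$, with $w$ as in Eq.~\eqref{eq:integral_term}. Rearranging the displayed identity then gives $-\xi\,v + f(v) = \mF$ a.e., which is Eq.~\eqref{eq:moving_frame_flux_relation}.

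It remains to check that $\mF$ is Lipschitz continuous. Since $v \in L^\infty(\Rset;\Rset^n)$, for all $\xi_1, \xi_2$ we have
\begin{equation*}
  \bigl|\mF(\xi_2) - \mF(\xi_1)\bigr|
  = \left|\int_{\xi_1}^{\xi_2} v\,d\xi'\right|
  \le \|v\|_{L^\infty}\,|\xi_2 - \xi_1|.
\end{equation*}
So $\mF$ is Lipschitz with constant $\|v\|_{L^\infty}$, and in particular it is defined everywhere, not just almost everywhere.

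There is no real obstacle here. The only point needing care is that the identity holds a.e. while $\mF$ is a genuine everywhere-defined continuous function. This is handled by defining $\mF$ through $w$ rather than through the left-hand side $-\xi\,v + f(v)$. Lemma~\ref{lem:well-defined}, adapted to $v$, ensures that $f(v)$ is defined and essentially bounded, so the a.e. identity is meaningful.
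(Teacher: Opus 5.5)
Your proposal is correct and follows the paper's proof: Theorem~\ref{th:Dafermos} gives $\mD = \mD_0$ a.e., you set $\mF(\xi) := \mD_0 - \int_{\xi_0}^\xi v(\xi')\,d\xi'$, and Lipschitz continuity follows from $v \in L^\infty(\Rset;\Rset^n)$. Your explicit bound $\abs{\mF(\xi_2) - \mF(\xi_1)} \le \norm{v}_{L^\infty}\abs{\xi_2 - \xi_1}$ just spells out the step the paper states in one line.
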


\begin{proof}
  By Theorem~\ref{th:Dafermos},
  $v$ is a self-similar weak solution of system~\eqref{eq:conslaw}
  if and only if $\mD$ is essentially constant.
  If $\mD = \mD_0$ a.e.,
  define
  \begin{equation}
    \mF(\xi) := \mD_0 - \int_{\xi_0}^\xi v(\xi')\,d\xi'
  \end{equation}
  for all $\xi \in \Rset$.
  Then $-\xi\,v + f(v) = \mF$ a.e.
  Moreover,
  $\mF$ is Lipschitz continuous
  because $v \in L^\infty(\Rset; \Rset^n)$.
\end{proof}

\begin{definition}
  \label{def:moving_frame_flux}
  We refer to the Lipschitz continuous function $\mF$
  associated with a self-similar weak solution $v$
  of system~\eqref{eq:conslaw}
  as the \emph{moving frame flux} of $v$.
\end{definition}

\begin{remark}
  \label{rem:moving_frame_flux}
  To justify this terminology,
  let $u$ be the self-similar state function
  associated to $v$.
  For a fixed $\xi_\star \in \Rset$,
  replace the coordinate $x$ by $y := x - \xi_\star\,t$,
  \ie switch to the frame of reference moving at speed $\xi_\star$.
  Transformed to this frame,
  the solution $u$ becomes $U$, as defined by
  \begin{equation}
    U(y, t) := u(y + \xi_\star\,t, t) = v(y/t + \xi_\star)
  \end{equation}
  for a.e.\ $(y, t) \in \bbH$.
  Moreover,
  $U$ is a solution of the transformed system~\eqref{eq:conslaw}:
  with the flux function $f^{\xi_\star} : \bbU \to \Rset^n$ defined by
  \begin{equation}
    f^{\xi_\star}(U) := -\xi_\star\,U + f(U)
  \end{equation}
  for $U \in \bbU$,
  \begin{equation}
    U_t + f^{\xi_\star}(U)_y = 0.
  \end{equation}
  In other words, $f^{\xi_\star}$ is the flux function in the moving frame.
  We adopt Def.~\ref{def:moving_frame_flux} because
  \begin{equation}
    \text{$\mF(\xi) = f^{\xi_\star}\bigl(v(\xi)\bigr)$ for almost
    every $\xi \in \Rset$.}
    \label{eq:moving_frame_flux_relation_explicit}
  \end{equation}
  \qed
\end{remark}

\subsection{Integral equation}
\label{sec:integral_equation}

Theorem~\ref{th:Dafermos}
says that $v$ is a self-similar weak solution if and only if
\begin{equation}
  \mD(\xi) - \mD(\zeta) = 0 \ \text{for almost every $(\zeta, \xi)
  \in \Rset^2$.}
  \label{eq:D_xi_minus_D_zeta=0}
\end{equation}
We now cast this condition into a form that is more useful to us.

Recall our hypothesis that the convex hull of $\bbU_{SH}$ is contained in $\bbU$:
if $u^-$, $u^+ \in \bbU_{SH}$,
then $(1 - \tau)\,u^- + \tau\,u^+ \in \bbU$ for all $\tau \in [0, 1]$.
As $D f(u) = A(u)$, we find that
\begin{equation}
  \frac{d}{d\tau} f\bigl((1 - \tau)\,u^- + \tau\,u^+\bigr)
  = A\bigl((1 - \tau)\,u^- + \tau\,u^+\bigr)\,(u^+ - u^-).
  \label{eq:derivative}
\end{equation}
Let
\begin{equation}
  \overline{A}(u^-, u^+) := \int_0^1 A\bigl((1 - \tau)\,u^- +
  \tau\,u^+\bigr)\,d\tau
  \label{eq:overline_A}
\end{equation}
be the average of the characteristic matrix $A$ over the line from
$u^-$ to $u^+$.
Then integrating Eq.~\eqref{eq:derivative}
with respect to $\tau$ from $0$ to $1$ shows that
\begin{equation}
  f(u^+) - f(u^-) = \overline{A}(u^-, u^+)\,(u^+ - u^-).
  \label{eq:Taylor}
\end{equation}
Because the characteristic matrix $A$ is $C^1$,
the averaged characteristic matrix $\overline{A}$
is a $C^1$ function of $(u^-, u^+)$.
Note also that $\overline{A}(u, u) = A(u)$.

We introduce the matrix
\begin{equation}
  M(u^-, \xi, u^+) := -\xi\,I + \overline{A}(u^-, u^+).
  \label{eq:matrix_M}
\end{equation}
According to Eq.~\eqref{eq:Taylor}
the Rankine--Hugoniot condition~\eqref{eq:Rankine--Hugoniot}
reads
\begin{equation}
  M(u^-, s, u^+)\,(u^+ - u^-) = 0.
  \label{eq:R-H_with_matrix_M}
\end{equation}

\begin{lemma}[Dafermos~\cite{Daf08,Daf26}]
  \label{lem:matrix}
  For almost every $(\zeta, \xi) \in \Rset^2$,
  \begin{equation}
    \mD(\xi) - \mD(\zeta) =
    M\bigl(v(\zeta), \xi, v(\xi)\bigr) \left[v(\xi) - v(\zeta)\right]
    + \int_{\zeta}^{\xi}
    \left[v(\xi') - v(\zeta)\right] d\xi'.
    \label{eq:D_b_minus_D_a_modified}
  \end{equation}
\end{lemma}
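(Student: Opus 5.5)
This is a direct algebraic computation, carried out on a full-measure set of pairs $(\zeta,\xi)$. First fix a representative of $v$ and the corresponding a.e.-defined $\mD$ from Eq.~\eqref{eq:Dafermos_function}. Let $N\subset\Rset$ be the null set of $\xi$ at which the defining formula fails or $v(\xi)\notin\bbU_{SH}$; the latter is null by Lemma~\ref{lem:essential_image}. Then $N\times\Rset \cup \Rset\times N$ is null in $\Rset^2$, and I will verify Eq.~\eqref{eq:D_b_minus_D_a_modified} for every pair with $\zeta,\xi\notin N$.

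For such a pair, subtract the defining formulas. This gives
\begin{equation*}
  \mD(\xi)-\mD(\zeta) = -\xi\,v(\xi)+\zeta\,v(\zeta) + f\bigl(v(\xi)\bigr)-f\bigl(v(\zeta)\bigr) + \int_\zeta^\xi v(\xi')\,d\xi'.
\end{equation*}
Since $v(\zeta),v(\xi)\in\bbU_{SH}$, the convex-hull hypothesis lets us apply Eq.~\eqref{eq:Taylor} with $u^-=v(\zeta)$ and $u^+=v(\xi)$. This replaces the flux difference by $\overline{A}\bigl(v(\zeta),v(\xi)\bigr)\,[v(\xi)-v(\zeta)]$.

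Next, regroup the linear terms as
\begin{equation*}
  -\xi\,v(\xi)+\zeta\,v(\zeta) = -\xi\,[v(\xi)-v(\zeta)] - (\xi-\zeta)\,v(\zeta).
\end{equation*}
Combining the first piece with the averaged-matrix term gives $M\bigl(v(\zeta),\xi,v(\xi)\bigr)[v(\xi)-v(\zeta)]$ by Eq.~\eqref{eq:matrix_M}. For the second piece, write $(\xi-\zeta)\,v(\zeta)=\int_\zeta^\xi v(\zeta)\,d\xi'$. Absorbing it into the remaining integral yields $\int_\zeta^\xi [v(\xi')-v(\zeta)]\,d\xi'$, which completes the identity. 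This regrouping holds for either order of $\zeta$ and $\xi$, with the usual oriented integral.

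The main obstacle is not the algebra but the measure-theoretic bookkeeping. We must make sure the identity holds for pointwise representatives off a product-null set. We must also check that the segment between $v(\zeta)$ and $v(\xi)$ lies in $\bbU$, where $f$ is $C^2$, which is exactly what the convex-hull assumption provides.
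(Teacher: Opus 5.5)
Your proposal is correct and follows essentially the same route as the paper: subtract the defining formulas, replace the flux difference via Eq.~\eqref{eq:Taylor} using the convex-hull hypothesis, and absorb $(\xi-\zeta)\,v(\zeta)$ into the integral (the paper phrases this as adding the trivial identity $0=(\xi-\zeta)\,u^- - \int_\zeta^\xi u^-\,d\xi'$). Your explicit product-null exceptional set $N\times\Rset \cup \Rset\times N$ is bookkeeping that the paper leaves implicit.
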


\begin{proof}
  Written in terms of the notation $u^- := v(\zeta)$ and $u^+ := v(\xi)$,
  Eq.~\eqref{eq:Dafermos_function}
  reads
  \begin{equation}
    \mD(\xi) - \mD(\zeta)
    = -\xi\,u^+ + f(u^+) - \left[-\zeta\,u^- + f(u^-)\right]
    + \int_{\zeta}^{\xi} v(\xi')\,d\xi'.
  \end{equation}
  Adding the trivial equation
  $0 = (\xi - \zeta)\,u^- - \int_{\zeta}^{\xi} u^-\,d\xi'$
  yields
  \begin{equation}
    \mD(\xi) - \mD(\zeta)
    = -\xi\,(u^+ - u^-) + f(u^+) - f(u^-)
    + \int_{\zeta}^{\xi} [v(\xi') - u^-]\,d\xi'.
  \end{equation}
  By Eq.~\eqref{eq:Taylor},
  \begin{equation}
    \mD(\xi) - \mD(\zeta)
    = \left[-\xi\,I + \overline{A}(u^-, u^+)\right](u^+ - u^-)
    + \int_{\zeta}^{\xi} [v(\xi') - u^-]\,d\xi'.
  \end{equation}
  Using the definition~\eqref{eq:matrix_M}
  and substituting $u^- = v(\zeta)$ and $u^+ = v(\xi)$
  yields Eq.~\eqref{eq:D_b_minus_D_a_modified}.
\end{proof}

The following conjunction of Theorem~\ref{th:Dafermos}
and Lemma~\ref{lem:matrix}
figures prominently in the proofs of
Prop.~\ref{prop:discontinuities}
concerning discontinuities
and Prop.~\ref{prop:constancy}
concerning constant states.
A variant is the basis for the proof of
Theorem~\ref{th:rarefaction_intervals}
concerning rarefaction waves.

\begin{proposition}[Dafermos~\cite{Daf08,Daf26}]
  \label{prop:integral_equation}
  Suppose that $v$ is a self-similar state function.
  Then $v$ is a weak solution of system~\eqref{eq:conslaw}
  if and only if
  \begin{equation}
    M\bigl(v(\zeta), \xi, v(\xi)\bigr) \left[v(\xi) - v(\zeta)\right]
    + \int_{\zeta}^{\xi}
    \left[v(\xi') - v(\zeta)\right] d\xi' = 0
    \ \text{for almost every $(\zeta, \xi) \in \Rset^2$.}
    \label{eq:integral_equation}
  \end{equation}
\end{proposition}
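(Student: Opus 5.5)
The plan is to combine Theorem~\ref{th:Dafermos} with Lemma~\ref{lem:matrix}. The only subtlety is that "essentially constant" must be converted into a statement about almost every pair $(\zeta,\xi)\in\Rset^2$, and back again. By Lemma~\ref{lem:matrix}, the left-hand side of Eq.~\eqref{eq:integral_equation} equals $\mD(\xi)-\mD(\zeta)$ for almost every $(\zeta,\xi)\in\Rset^2$. Hence condition~\eqref{eq:integral_equation} is equivalent to condition~\eqref{eq:D_xi_minus_D_zeta=0}. It therefore suffices to show that \eqref{eq:D_xi_minus_D_zeta=0} holds if and only if $\mD$ is essentially constant, and then apply Theorem~\ref{th:Dafermos}.

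\emph{Forward direction.} Suppose $\mD=\mD_0$ off a null set $N\subset\Rset$. Then $\mD(\xi)-\mD(\zeta)=0$ whenever neither $\zeta$ nor $\xi$ lies in $N$. The exceptional set $(N\times\Rset)\cup(\Rset\times N)$ has two-dimensional Lebesgue measure zero. Thus \eqref{eq:D_xi_minus_D_zeta=0} holds.

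\emph{Reverse direction.} Suppose \eqref{eq:D_xi_minus_D_zeta=0} holds off a null set $E\subset\Rset^2$. The function $(\zeta,\xi)\mapsto\mD(\xi)-\mD(\zeta)$ is measurable on $\Rset^2$, since $\mD$ is measurable. By Fubini--Tonelli applied to the indicator of $E$, for almost every $\zeta$ the slice $E_\zeta=\{\xi:(\zeta,\xi)\in E\}$ has measure zero. Fix one such $\zeta_*$ at which $\mD(\zeta_*)$ is defined. Then $\mD(\xi)=\mD(\zeta_*)$ for almost every $\xi$, so $\mD$ is essentially constant.

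The main point requiring care is this measure-theoretic step. One must make sure that the pointwise representatives used in Lemma~\ref{lem:matrix} are fixed once and for all, so that the identity holds off a single null set in $\Rset^2$. One also needs measurability of the composite integrand $(\zeta,\xi)\mapsto M(v(\zeta),\xi,v(\xi))[v(\xi)-v(\zeta)]$. This follows because $\overline{A}$ is continuous and $v$ is measurable with values a.e.\ in $\bbU_{SH}$, whose convex hull lies in $\bbU$ by hypothesis. The integral term $\int_\zeta^\xi[v(\xi')-v(\zeta)]\,d\xi'$ is measurable in $(\zeta,\xi)$ as well. With these observations, the two equivalences chain together and give the proposition.
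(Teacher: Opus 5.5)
Your proposal is correct and follows essentially the paper's route: the paper obtains the proposition as the conjunction of Theorem~\ref{th:Dafermos} (recast as Eq.~\eqref{eq:D_xi_minus_D_zeta=0}) with Lemma~\ref{lem:matrix}. Your one addition is the Fubini--Tonelli step converting essential constancy of $\mD$ into the almost-every-pair statement and back, which the paper leaves implicit.
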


\section{Continuity}
\label{section:continuity}

\subsection{Continuity for weak solutions}
\label{sec:continuity_for_weak_solutions}

Suppose that $v$ is a self-similar weak solution
of system~\eqref{eq:conslaw}.
Traditionally,
$v$ is assumed to be piecewise smooth
and then is assembled from smooth rarefaction waves,
jump discontinuities that satisfy the Rankine--Hugoniot condition,
and constant states.
The speed axis $\xi = x/t$ has a corresponding partition into sets
where $v$ varies continuously, has discontinuities, or is constant.
We seek such a partition when $v$ is only assumed to be essentially bounded.

Defining this partition for members of $L^\infty$ is challenging.
Measurable functions are regarded as equivalent
if they agree except on a set of measure zero.
A member of $L^\infty(\Rset, \Rset^n)$
is an equivalence class of functions that agree a.e.;
its values at particular points $\xi \in \Rset$
and on sequences of points $\{ \xi_k \}_{k=1}^\infty$ in $\Rset$
are arbitrary.

One may define $\xi$ to be a point of continuity for a member of $L^\infty$
if there exists a representative that is continuous at $\xi$.
Then discontinuity would be defined only by the absence of a
continuous representative,
not by the existence of distinct one-sided accumulation states,
which are indispensable for verifying the Rankine--Hugoniot condition.

Another way of addressing this problem is
to employ \emph{approximate continuity}~\cite[\S~1.7]{EvaGar15}.
However, this approach is inadequate for our purposes because
$v$ can be approximately continuous at every point
even though no continuous function $\overline{v}$ exists
with $v = \overline{v}$ a.e.
(A Volterra-type ``thin spikes'' function~\cite[Chapter~3]{Bru94}
provides an example.)

Instead, we adopt notions of continuity and accumulation sets
based on the concept of \emph{essential image}.
Appendix~\ref{sec:essential_image}
provides definitions of \ESSIM{} accumulation sets
and \ESSIM{} continuity for a measurable function $h : \Rset^p \to \Rset^n$
and proves the basic results we invoke.
In the present section,
we specialize to the case relevant to self-similar weak solutions:
the domain is one-dimensional and we can distinguish one-sided
accumulation sets.

Let $J \subseteq \Rset$ be measurable.
By Defs.~\ref{def:essential_image}
and~\ref{def:essential_image_of_subset},
the essential image of $J$ under $v$,
denoted
\begin{equation}
  \essim v\restrict_J,
\end{equation}
is the set of $u \in \Rset^n$ such that $v^{-1}[\Uopen] \cap J$
has positive Lebesgue measure for each neighborhood $\Uopen$ of $u$.
Notice that the essential image is invariant under equality a.e.,
so that it is well-defined on $L^\infty$.

Lemma~\ref{lem:monotonicity}
and Lemma~\ref{lem:essential_image}
justify Def.~\ref{def:essential_accumulation_sets}:
the \emph{\ESSIM{} accumulation set} of $v$ at $\xi \in \Rset$ is
the intersection of the essential images of intervals shrinking around $\xi$:
\begin{equation}
  \Aessim(v; \xi) := \bigcap_{r > 0} \essim v\restrict_{(\xi - r,\,\xi + r)}.
\end{equation}
For the same reason, we can define one-sided accumulation sets.

\begin{definition}
  \label{def:essential-image_accumulation_set}
  The \emph{left} and \emph{right \ESSIM{} accumulation sets} are
  the intersections of the essential images of shrinking one-sided intervals:
  \begin{equation}
    \Aessim^-(v; \xi) := \bigcap_{r > 0} \essim v\restrict_{(\xi - r,\,\xi)}
    \quad \text{and} \quad
    \Aessim^+(v; \xi) := \bigcap_{r > 0} \essim v\restrict_{(\xi,\,\xi + r)}.
  \end{equation}
  Elements of these sets are the \emph{\ESSIM{} accumulation states}
  of $v$ at $\xi$.
\end{definition}

The same argument as in the proof of Lemma~\ref{lem:inclusion}
shows that $\Aessim^-(v; \xi)$ and $\Aessim^+(v; \xi)$ are nonempty and compact.
As $(\xi - r, \xi + r) = (\xi - r, \xi) \cup (\xi, \xi + r) \cup \{\xi\}$,
Lemma~\ref{lem:union} and Lemma~\ref{lem:monotonicity}
imply that
\begin{equation}
  \Aessim(v; \xi) = \Aessim^-(v; \xi) \cup \Aessim^+(v; \xi).
  \label{eq:Aessim_union}
\end{equation}
By Def.~\ref{def:essential-image_continuity},
the point $\xi$ is an \emph{\ESSIM{} continuity point for $v$}
if and only if $\Aessim^-(v; \xi)$
and $\Aessim^+(v; \xi)$ are the same singleton (one-point set);
in this case,
we let $\overline{v}(\xi) \in \Rset^n$ denote
the \emph{\ESSIM{} accumulation value of $v$ at $\xi$}:
\begin{equation}
  \Aessim^-(v; \xi) = \bigl\{\overline{v}(\xi)\bigr\} = \Aessim^+(v; \xi).
  \label{eq:essential-image_continuity_point_state_function}
\end{equation}
Otherwise, the point $\xi$ is an \emph{\ESSIM{} discontinuity point for $v$}.

\begin{definition}
  \label{def:essential-image_jump_discontinuity}
  A point $\xi$ is an \emph{\ESSIM{} jump discontinuity point of $v$}
  if $\Aessim^-(v; \xi)$ and $\Aessim^+(v; \xi)$ are distinct singletons;
  in this case,
  we let $v^-(\xi)$ and $v^+(\xi)$ denote
  the \emph{\ESSIM{} left and right accumulation values of $v$ at $\xi$}:
  \begin{equation}
    \text{$\Aessim^-(v; \xi) = \bigl\{v^-(\xi)\bigr\}$
      and $\Aessim^+(v; \xi) = \bigl\{v^+(\xi)\bigr\}$
    with $v^+(\xi) \ne v^-(\xi)$.}
    \label{eq:essential-image_jump_discontinuity_point_state_function}
  \end{equation}
\end{definition}

The following properties of \ESSIM{} continuity
and \ESSIM{} accumulation sets
are central to our application to the theory of conservation laws.
Lemma~\ref{lem:continuity_lemma}
entails the first result,
which allows us to turn an a.e.\ relationship into a pointwise one
on intervals of \ESSIM{} continuity.

\begin{lemma}
  \label{lem:continuity_lemma_state_function}
  Let $v \in L^\infty(\Rset; \Rset^n)$.
  Suppose every point in an open interval $J \subseteq \Rset$
  is an \ESSIM{} continuity point for $v$.
  Then $\overline{v} : J \to \Rset^n$ is continuous
  and $\overline{v} = v$ a.e.\ in $J$.
\end{lemma}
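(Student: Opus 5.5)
The plan is to argue directly from the definitions rather than rely on the general Lemma~\ref{lem:continuity_lemma}. The main tool is the characterization of \ESSIM{} continuity through $\Aessim(v;\xi)=\Aessim^-(v;\xi)\cup\Aessim^+(v;\xi)=\{\overline{v}(\xi)\}$, as in Eq.~\eqref{eq:Aessim_union}. Then the compactness of essential images (Lemma~\ref{lem:essential_image}) and the nested intersection of Lemma~\ref{lem:monotonicity} turn the singleton condition into a quantitative statement.

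\textbf{Step 1 (uniform localization).} Fix $\xi\in J$ and $\epsilon>0$. The sets $K_r:=\essim v\restrict_{(\xi-r,\xi+r)}$ are compact, since they lie in the compact set $\essim v$. They also decrease as $r\decr 0$, and their intersection is $\{\overline{v}(\xi)\}$. A standard compactness argument applies: the sets $K_r\setminus B_\epsilon(\overline{v}(\xi))$ are compact, decreasing, and have empty intersection. Hence there is $r>0$ such that $K_r\subseteq B_\epsilon(\overline{v}(\xi))$.

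\textbf{Step 2 (continuity of $\overline{v}$).} Take $r$ as in Step 1 and $\eta\in(\xi-r,\xi+r)\cap J$. For small $\rho$ the interval $(\eta-\rho,\eta+\rho)$ lies inside $(\xi-r,\xi+r)$. By monotonicity, $\overline{v}(\eta)\in K_\rho(\eta)\subseteq K_r$, so $|\overline{v}(\eta)-\overline{v}(\xi)|\le\epsilon$. Thus $\overline{v}$ is continuous at every point of $J$.

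\textbf{Step 3 ($v=\overline{v}$ a.e.).} Fix $\xi\in J$, $\epsilon>0$, and $r$ from Step 1. By Lemma~\ref{lem:essential_image}, applied to the restriction to $(\xi-r,\xi+r)$, we have $v(\zeta)\in K_r$ for almost every such $\zeta$, so $|v(\zeta)-\overline{v}(\xi)|\le\epsilon$ a.e.\ there. Shrinking $r$ so that Step 2 also gives $|\overline{v}(\zeta)-\overline{v}(\xi)|\le\epsilon$ on the interval, we get $|v-\overline{v}|\le 2\epsilon$ a.e.\ on a neighborhood of $\xi$. These neighborhoods cover $J$, and $J$ is a countable union of compact intervals, so finitely many (and overall countably many) neighborhoods suffice. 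It follows that $|v-\overline{v}|\le2\epsilon$ a.e.\ on $J$. Letting $\epsilon=1/m$ with $m\to\infty$ gives $v=\overline{v}$ a.e.\ in $J$.

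The main obstacle is Step 3. We must check that ``$v(\zeta)$ lies in the essential image of the restricted function for a.e.\ $\zeta$'' holds for restrictions to subintervals, which is the content of Lemma~\ref{lem:essential_image} applied to $v\restrict_{(\xi-r,\xi+r)}$. We must also handle the countable covering carefully so that the union of the exceptional null sets remains null.
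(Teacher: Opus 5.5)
Your proof is correct. Steps 1 and 2 are essentially the paper's argument. The paper obtains this lemma by specializing Lemma~\ref{lem:continuity_lemma} to $\Yopen = J$, and the continuity half of that proof is exactly your nested-compact-sets localization followed by monotonicity at nearby points. One small fix: $K_r$ is compact because Lemma~\ref{lem:essential_image}, applied to the interval $(\xi - r, \xi + r)$, shows it is closed and bounded. Merely lying inside the compact set $\essim v$ does not give closedness.

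You genuinely diverge from the paper in Step 3.

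\textbf{The paper's route.} It gets $v = \overline{v}$ a.e.\ from Lemma~\ref{lem:inclusion}. That lemma applies the Lebesgue density theorem to preimages of the countably many basic balls, and shows that $v(\zeta) \in \Aessim(v; \zeta)$ for almost every $\zeta$. Since $\Aessim(v; \zeta) = \{\overline{v}(\zeta)\}$ in $J$, equality a.e.\ follows at once, pointwise.

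\textbf{Your route.} You use only the weaker fact from Lemma~\ref{lem:essential_image}: $v$ takes values in the essential image of each small interval almost everywhere on that interval. You convert this into local closeness to $\overline{v}$ using the uniform localization of Step~1 and the continuity of Step~2. You then finish with a countable (Lindel\"of) cover of $J$ and $\epsilon = 1/m$.

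\textbf{What each buys.} Your argument is more elementary: it needs no density theorem, only the countable base in the target space. However, it uses \ESSIM{} continuity at every point of a neighborhood in an essential way. The paper's Lemma~\ref{lem:inclusion} requires no continuity hypothesis and holds for any locally essentially bounded $h$. Once that lemma is available, the a.e.\ step is a one-line consequence.
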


As a corollary,
when $v$ is \ESSIM{} continuous in an open interval $J$,
we may replace $v$ by $\overline{v}$ in
Eq.~\eqref{eq:moving_frame_flux_relation}
to find that $-\xi\,\overline{v}(\xi) +
f\bigl(\overline{v}(\xi)\bigr) = \mF(\xi)$
for a.e. $\xi \in J$.
By continuity, this equality holds throughout $J$:
\begin{equation}
  -\xi\,\overline{v}(\xi) + f\bigl(\overline{v}(\xi)\bigr) = \mF(\xi)
  \ \text{for all $\xi \in J$.}
  \label{eq:moving_frame_flux_relation_with_overline_v}
\end{equation}
Similarly,
consider replacing $v$ by $\overline{v}$
in the integral equation~\eqref{eq:integral_equation}.
The left-hand side of the resulting equation
is a continuous function of $(\xi, \zeta) \in J \times J$ that vanishes a.e.,
so we draw the following conclusion.

\begin{lemma}
  \label{lem:everywhere_lemma}
  Let $v$ be a self-similar weak solution
  of system~\eqref{eq:conslaw}.
  Suppose every point in an open interval $J \subseteq \Rset$
  is an \ESSIM{} continuity point for $v$.
  Then
  \begin{equation}
    M\bigl(\overline{v}(\zeta), \xi, \overline{v}(\xi)\bigr)
    \left[\overline{v}(\xi) - \overline{v}(\zeta)\right]
    + \int_{\zeta}^{\xi}
    \left[\overline{v}(\xi') - \overline{v}(\zeta)\right] d\xi' = 0
    \ \text{for all $\zeta$, $\xi \in J$.}
    \label{eq:integral_equation_with_overline_v}
  \end{equation}
\end{lemma}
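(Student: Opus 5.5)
The plan is to replace $v$ by $\overline{v}$ in Prop.~\ref{prop:integral_equation}, which turns the a.e.\ identity on $J\times J$ into one involving a continuous function, and then use continuity to upgrade ``almost every'' to ``every''. By Lemma~\ref{lem:continuity_lemma_state_function}, $\overline{v}:J\to\Rset^n$ is continuous and $\overline{v}=v$ a.e.\ in $J$. Let $N\subseteq J$ be the null set where they differ. By Prop.~\ref{prop:integral_equation}, Eq.~\eqref{eq:integral_equation} holds for $(\zeta,\xi)$ outside a null set $Z\subseteq\Rset^2$. For $(\zeta,\xi)\in (J\times J)\setminus\bigl(Z\cup (N\times\Rset)\cup(\Rset\times N)\bigr)$ we may substitute $\overline{v}(\zeta)=v(\zeta)$ and $\overline{v}(\xi)=v(\xi)$ in the first two slots. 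Moreover, the integral $\int_\zeta^\xi[v(\xi')-v(\zeta)]\,d\xi'$ equals the corresponding integral with $\overline{v}$, since the integrands agree a.e.\ on the interval between $\zeta$ and $\xi$, which lies in $J$. Because $N\times\Rset$ and $\Rset\times N$ are null in $\Rset^2$, the left-hand side of Eq.~\eqref{eq:integral_equation_with_overline_v}, call it $G(\zeta,\xi)$, vanishes for a.e.\ $(\zeta,\xi)\in J\times J$.

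Next I check that $G$ is continuous on $J\times J$. Since $\overline{v}$ is continuous, it maps $J$ into $\essim v\subseteq\bbU_{SH}$: each $\overline{v}(\xi)$ lies in $\Aessim(v;\xi)$, which is contained in the essential image. The convex hull hypothesis then makes $\overline{A}(\overline{v}(\zeta),\overline{v}(\xi))$ well-defined. Because $\overline{A}$ is $C^1$ (in particular continuous) in its arguments, $(\zeta,\xi)\mapsto M(\overline{v}(\zeta),\xi,\overline{v}(\xi))[\overline{v}(\xi)-\overline{v}(\zeta)]$ is continuous. The integral term $\int_\zeta^\xi\overline{v}\,d\xi'-(\xi-\zeta)\overline{v}(\zeta)$ is continuous because $\overline{v}$ is continuous and locally bounded on $J$.

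Finally, a continuous function on the open set $J\times J$ that vanishes a.e.\ vanishes everywhere. Otherwise $|G|>0$ on some open ball, which has positive measure, a contradiction. Hence Eq.~\eqref{eq:integral_equation_with_overline_v} holds for all $\zeta,\xi\in J$.

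The only mildly delicate step is the bookkeeping that the exceptional sets $N\times\Rset$ and $\Rset\times N$ are null and that the integral term is unaffected by the change of representative. The membership $\overline{v}(J)\subseteq\bbU_{SH}$, needed so that $M$ is defined, also deserves a line. Neither step should be a real obstacle.
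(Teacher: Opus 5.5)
Your proposal is correct and is the same argument the paper gives in the sentence preceding the lemma: substitute $\overline{v}$ for $v$ in Eq.~\eqref{eq:integral_equation}, observe that the resulting left-hand side is continuous on $J \times J$ and vanishes a.e., and conclude that it vanishes everywhere. Your extra bookkeeping makes explicit what the paper leaves implicit. That bookkeeping consists of the null sets $N\times\Rset$ and $\Rset\times N$, the change of representative inside the integral, and the inclusion $\overline{v}(J)\subseteq\essim v\subseteq\bbU_{SH}$, which ensures that $M$ is defined.
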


\begin{remark}
  An analogous integral equation employed in the proof of
  Theorem~\ref{th:rarefaction_intervals}
  likewise holds pointwise on intervals of \ESSIM{} continuity.
  \qed
\end{remark}

A related equation constrains the possible states $u \in \Aessim(v; \xi_\star)$
at any $\xi_\star \in \Rset$.

\begin{lemma}
  \label{lem:discontinuity_lemma}
  Let $v$ be a self-similar weak solution
  of system~\eqref{eq:conslaw},
  and let $\mF$ be the moving frame flux of $v$.
  If $\xi_\star \in \Rset$ and $u \in \Aessim(v; \xi_\star)$,
  then
  \begin{equation}
    -\xi_\star\,u + f(u) = \mF(\xi_\star).
    \label{eq:integral_term_with_u}
  \end{equation}
\end{lemma}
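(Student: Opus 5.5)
The plan is to combine the a.e.\ identity of Corollary~\ref{co:Dafermos} with the definition of the essential image, and then pass to the limit using continuity of $f$ and of $\mF$. Let $N \subseteq \Rset$ be the null set off which $-\xi\,v(\xi) + f\bigl(v(\xi)\bigr) = \mF(\xi)$ holds. We may also enlarge $N$ by a null set so that $v(\xi) \in \essim v \subseteq \bbU_{SH}$ for $\xi \notin N$; this uses Lemma~\ref{lem:essential_image}, exactly as in the proof of Lemma~\ref{lem:well-defined}. Fix $\xi_\star$ and $u \in \Aessim(v; \xi_\star)$.

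First, we produce a sequence of good points. For each $k$, take $r_k = 1/k$ and the open ball $\Uopen_k$ of radius $1/k$ about $u$. Because $u \in \essim v\restrict_{(\xi_\star - r_k,\,\xi_\star + r_k)}$, the set $v^{-1}[\Uopen_k] \cap (\xi_\star - r_k, \xi_\star + r_k)$ has positive measure. It therefore contains a point $\xi_k \notin N$. This yields $\xi_k \to \xi_\star$, $v(\xi_k) \to u$, and
\begin{equation*}
  -\xi_k\,v(\xi_k) + f\bigl(v(\xi_k)\bigr) = \mF(\xi_k).
\end{equation*}

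Next, we pass to the limit. The function $\mF$ is Lipschitz, hence continuous, so the right-hand side converges to $\mF(\xi_\star)$. On the left-hand side, $\xi_k\,v(\xi_k) \to \xi_\star u$. For the flux term we need $f$ to be continuous at $u$, which requires $u \in \bbU$. This holds because $u \in \Aessim(v;\xi_\star) \subseteq \essim v \subseteq \bbU_{SH} \subseteq \bbU$; the first inclusion follows from monotonicity of essential images (Lemma~\ref{lem:monotonicity}), since $\essim v\restrict_J \subseteq \essim v$. Since $f$ is $C^2$ on the open set $\bbU$, we get $f\bigl(v(\xi_k)\bigr) \to f(u)$, and Eq.~\eqref{eq:integral_term_with_u} follows.

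The only delicate point is confirming that accumulation states lie in the domain of $f$, so that the limit makes sense. This is handled by the inclusion chain above, together with the standing hypothesis~(b) that the essential image lies in $\bbU_{SH}$. Everything else is a routine sequential limit argument.
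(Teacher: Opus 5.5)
Your proposal is correct and follows the paper's approach. The paper defines $\Psi(\xi,u) := -\xi\,u + f(u) - \mF(\xi)$ and applies Lemma~\ref{lem:ae_to_Aessim}, whose proof is the same sequential argument you give: pick good points off a null set in the shrinking sets $E_k$, then pass to the limit by continuity. You have inlined that argument, and you also check explicitly that $u$ lies in the domain of $f$.
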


\begin{proof}
  By Theorem~\ref{th:Dafermos},
  $\mF$ is Lipschitz continuous
  and $-\xi\,v(\xi) + f\bigl(v(\xi)\bigr) - \mF(\xi) = 0$ for a.e.
  $\xi \in \Rset$.
  Define $\Psi : \Rset \times \bbU \to \Rset^n$
  by $\Psi(\xi, u) := -\xi\,u + f(u) - \mF(\xi)$.
  Lemma~\ref{lem:ae_to_Aessim}
  applied to $h = v$ and $\Psi$ yields the result.
\end{proof}

Equation~\eqref{eq:integral_term_with_u}
has the following interpretation
when $\xi_\star$ is an \ESSIM{} discontinuity point.
Pick one state $u_0 \in \Aessim(v; \xi_\star)$.
Then any other state $u \in \Aessim(v; \xi_\star)$ satisfies
\begin{equation}
  -\xi_\star\,(u - u_0) + f(u) - f(u_0) = 0.
  \label{eq:R-H_at_discontinuity}
\end{equation}
Thus, $u$ lies on the Hugoniot locus of $u_0$ and has propagation
speed $\xi_\star$.

\section{Constant states}
\label{section:constant_states}

\subsection{Constant states}
\label{sec:constant_states}

A state function satisfies the system of conservation laws
trivially wherever it is constant.
The next result gives a condition forcing constancy:
if the solution is continuous and \emph{non-resonant} in an open interval,
then it is constant there.

\begin{proposition}[Dafermos~\cite{Daf08,Daf26}]
  \label{prop:constancy}
  Suppose that every point in an open interval $J \subseteq \Rset$
  is an \ESSIM{} continuity point
  for the self-similar weak solution $v$.
  Let $\overline{v}$ denote the continuous representative of $v$ in $J$.
  If $\overline{v}$ is \emph{non-resonant} in $J$, meaning
  \begin{equation}
    \text{$\det\left[-\xi\,I + A\bigl(\overline{v}(\xi)\bigr)\right] \ne 0$
    for all $\xi \in J$,}
    \label{eq:non-resonant}
  \end{equation}
  then $\overline{v}$ is constant in $J$.
\end{proposition}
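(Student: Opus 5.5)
The plan is to work with the pointwise integral equation of Lemma~\ref{lem:everywhere_lemma} and show that $\overline{v}$ is locally constant near every point of $J$. Since $J$ is connected and $\overline{v}$ is continuous, local constancy then gives global constancy in $J$.

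Fix $\xi_\star \in J$ and set $u_\star := \overline{v}(\xi_\star)$. By the non-resonance hypothesis~\eqref{eq:non-resonant}, the matrix $M(u_\star, \xi_\star, u_\star) = -\xi_\star\,I + A(u_\star)$ is invertible. Recall that $\overline{A}$ is continuous, $\overline{A}(u,u) = A(u)$, and $\overline{v}$ is continuous. Hence there exist $\delta > 0$ and $C > 0$ such that $[\xi_\star - \delta, \xi_\star + \delta] \subseteq J$ and
\begin{equation*}
  \bigl\| M\bigl(\overline{v}(\zeta), \xi, \overline{v}(\xi)\bigr)^{-1} \bigr\| \le C
  \quad\text{for all } \zeta, \xi \in I_\delta := [\xi_\star - \delta, \xi_\star + \delta].
\end{equation*}
The only point requiring care is uniform invertibility on the square $I_\delta \times I_\delta$. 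This follows from continuity of the map $(\zeta,\xi) \mapsto M\bigl(\overline{v}(\zeta), \xi, \overline{v}(\xi)\bigr)$ at $(\xi_\star,\xi_\star)$ together with openness of the set of invertible matrices.

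Next, apply Eq.~\eqref{eq:integral_equation_with_overline_v} with $\zeta = \xi_\star$ and $\xi \in I_\delta$. Multiplying by the bounded inverse gives
\begin{equation*}
  \bigl|\overline{v}(\xi) - u_\star\bigr| \le C \left| \int_{\xi_\star}^{\xi} \bigl|\overline{v}(\xi') - u_\star\bigr|\,d\xi' \right|.
\end{equation*}
Let $g(\xi) := |\overline{v}(\xi) - u_\star|$ and $m := \max_{I_\delta} g$, which exists since $g$ is continuous on a compact interval. The inequality yields $g(\xi) \le C\,|\xi - \xi_\star|\, m \le C\delta\, m$ on $I_\delta$. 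Shrinking $\delta$ if necessary so that $C\delta < 1$, we get $m \le C\delta\,m$, which forces $m = 0$. Alternatively, one can apply Gronwall's inequality separately on each side of $\xi_\star$.

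Thus $\overline{v} \equiv u_\star$ on a neighborhood of each $\xi_\star \in J$, so $\overline{v}$ is locally constant. Consequently, for any fixed value $c$ the set $\{\xi \in J : \overline{v}(\xi) = c\}$ is open, and it is closed in $J$ by continuity of $\overline{v}$. Since $J$ is connected, $\overline{v}$ is constant in $J$.
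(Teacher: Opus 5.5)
Your proof is correct. It uses the same ingredients as the paper's proof: the pointwise integral equation~\eqref{eq:integral_equation_with_overline_v} with one endpoint held fixed, and invertibility of $M$ near the diagonal, which follows from non-resonance and the continuity of $\overline{A}$ and $\overline{v}$. Where you diverge is in how you close the argument.

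The paper treats the integral term only as $o(\abs{\xi-\zeta})$. It concludes that $\overline{v}(\xi)-\overline{v}(\zeta)=o(\abs{\xi-\zeta})$, so $\overline{v}$ is differentiable with zero derivative at every point of $J$. It then uses the standard fact that a function with vanishing derivative on an interval is constant.

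You instead use the fact that the integral term is controlled by the very quantity you are estimating, $\int \abs{\overline{v}-u_\star}$. You absorb it with a contraction (or Gronwall) bound using $C\delta<1$, which gives local constancy directly, and connectedness finishes.

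Your route avoids differentiability and the mean value theorem. It is also quantitative: the size of the neighborhood is governed by a bound on $\norm{M^{-1}}$. The paper's zero-derivative formulation, on the other hand, is the template it reuses in Theorem~\ref{th:rarefaction_intervals}. There the remainder is only known to be $o(\abs{\xi-\zeta})$, and it involves the variation of the $w$-component, so it is not bounded by the $z$-component being estimated. An absorption argument like yours would therefore not carry over to that theorem directly.
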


\begin{proof}
  Fix $\zeta \in J$.
  Because $\overline{v}$ is continuous,
  \begin{equation}
    \int_{\zeta}^{\xi}
    \left[\overline{v}(\xi') - \overline{v}(\zeta)\right] d\xi' = o(\abs{\xi - \zeta})
    \ \text{as $\xi \to \zeta$,}
  \end{equation}
  and Eqs.~\eqref{eq:matrix_M}
  and~\eqref{eq:overline_A}
  imply that
  \begin{equation}
    M\bigl(\overline{v}(\zeta), \xi, \overline{v}(\xi)\bigr)
    = -\zeta\,I + A\bigl(\overline{v}(\zeta)\bigr) + o(1)
    \ \text{as $\xi \to \zeta$}.
  \end{equation}
  Therefore,
  Eq.~\eqref{eq:integral_equation_with_overline_v}
  reads
  \begin{equation}
    \left[-\zeta\,I + A\bigl(\overline{v}(\zeta)\bigr) + o(1)\right]
    \left[\overline{v}(\xi) - \overline{v}(\zeta)\right]
    + o(\abs{\xi - \zeta}) = 0
    \ \text{as $\xi \to \zeta$}.
  \end{equation}
  The non-resonance hypothesis~\eqref{eq:non-resonant}
  entails that the matrix in the first bracket is invertible
  for $\xi$ sufficiently close to $\zeta$.
  Hence,
  \begin{equation}
    \overline{v}(\xi) - \overline{v}(\zeta) = o(\abs{\xi - \zeta})
    \ \text{as $\xi \to \zeta$.}
  \end{equation}
  In other words,
  \begin{equation}
    \frac{\overline{v}(\xi) - \overline{v}(\zeta)}{\xi - \zeta} \to 0
    \ \text{as $\xi \to \zeta$,}
  \end{equation}
  \ie $\overline{v}$ is differentiable,
  and its derivative is $0$, at $\zeta$.
  As $\zeta \in J$ is arbitrary,
  $\overline{v}$ has vanishing derivative throughout the open interval $J$.
  Thus, $\overline{v}$ is constant in $J$.
\end{proof}

\section{Rarefaction waves}
\label{section:rarefaction_waves}

In the strictly hyperbolic region $\bbU_{SH} \subseteq \bbU$ of state space,
the characteristic matrix $A$ has real, distinct eigenvalues.
Therefore, there exist $C^1$ functions, denoted
$\lambda_i : \bbU_{SH} \to \Rset$ for $i = 1$, $\ldots$, $n$,
such that each $\lambda_i(u)$ is an eigenvalue of $A(u)$
and
\begin{equation}
  \lambda_1(u) < \cdots < \lambda_n(u)
\end{equation}
for all $u \in \bbU_{SH}$.
The index $i$ is called the \emph{eigenvalue family}.
We focus on a particular characteristic family in a neighborhood of a particular state
and construct a corresponding smooth right eigenvector field.
To simplify notation, we omit the family index.

\subsection{Characteristic speed}
\label{sec:characteristic_speed}

If $u \in \bbU_{SH}$, then $\lambda \in \Rset$
is a \emph{characteristic speed} for system~\eqref{eq:conslaw}
at $u$ if it is a root of the \emph{characteristic polynomial}
for the characteristic matrix $A(u)$:
\begin{equation}
  p\bigl(\lambda, A(u)\bigr) := \det\left[-\lambda\,I + A(u)\right].
  \label{eq:characteristic_speed}
\end{equation}
By strict hyperbolicity, $\lambda$ is a simple root.
A \emph{right characteristic eigenvector} corresponding to $\lambda$
is a nonzero vector $r \in \Rset^n$ such that
\begin{equation}
  \left[-\lambda\,I + A(u)\right] r = 0.
  \label{eq:right_eigenvector}
\end{equation}

A \emph{characteristic speed map} is a $C^1$ function $\lambda :
\Uopen \to \Rset$,
where $\Uopen \subseteq \bbU_{SH}$ is open,
such that $\lambda(u)$ is an eigenvalue of $A(u)$ for all $u \in \Uopen$.
The following local construction provides
the characteristic speed map $\lambda$ and corresponding eigenvector field $r$.

\begin{lemma}
  \label{lem:local_characteristic_speed_map_and_eigenvector_field}
  Let $u_\star \in \bbU_{SH}$.
  Suppose that $\lambda_\star \in \Rset$
  is a root of the characteristic polynomial at $u_\star$.
  Then there exist a neighborhood $\Uopen \subseteq \bbU_{SH}$ of $u_\star$,
  a $C^1$ function $\lambda : \Uopen \to \Rset$ with
  $\lambda(u_\star) = \lambda_\star$,
  and a $C^1$ map $r : \Uopen \to \Rset^n\,\setminus\,\{0\}$
  such that, for all $u \in \Uopen$,
  \begin{equation}
    \left[-\lambda(u)\,I + A(u)\right] r(u) = 0.
    \label{eq:local_eigenvector}
  \end{equation}
\end{lemma}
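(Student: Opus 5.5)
The plan is to obtain $\lambda$ from the implicit function theorem applied to the characteristic polynomial, and then to build $r$ from the adjugate matrix or a fixed bordered linear system.

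Define $P(\lambda, u) := p\bigl(\lambda, A(u)\bigr) = \det\left[-\lambda\,I + A(u)\right]$. Since $f$ is $C^2$, $A$ is $C^1$, and the determinant is a polynomial in the entries, so $P$ is $C^1$ on $\Rset \times \bbU_{SH}$. By hypothesis $P(\lambda_\star, u_\star) = 0$. Strict hyperbolicity at $u_\star$ makes $\lambda_\star$ a simple root of $\lambda \mapsto P(\lambda, u_\star)$, so $\partial_\lambda P(\lambda_\star, u_\star) \ne 0$. The implicit function theorem then gives a neighborhood $\Uopen_1 \subseteq \bbU_{SH}$ of $u_\star$ and a $C^1$ function $\lambda : \Uopen_1 \to \Rset$ with $\lambda(u_\star) = \lambda_\star$ and $P\bigl(\lambda(u), u\bigr) = 0$. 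Hence $\lambda(u)$ is an eigenvalue of $A(u)$ for each $u \in \Uopen_1$.

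For the eigenvector, set $B(u) := -\lambda(u)\,I + A(u)$, which is $C^1$ with $\det B(u) = 0$. Because eigenvalues are simple, $\ker B(u)$ is one-dimensional and $B(u)$ has rank $n-1$. Fix a nonzero $r_\star \in \ker B(u_\star)$. Choose the linear functional $\ell(w) := r_\star \cdot w$ and consider the $C^1$ bordered system
\begin{equation*}
  B(u)\,w = 0, \qquad r_\star \cdot w = 1.
\end{equation*}
To solve it smoothly, pick $n-1$ rows of $B(u_\star)$ that are linearly independent and append the row $r_\star^T$. The resulting $n\times n$ matrix $N(u)$ is invertible at $u_\star$. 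It is not invertible only if $r_\star$ lies in the span of those rows; but that span is contained in $(\ker B(u_\star))^\perp$, while $r_\star \cdot r_\star \ne 0$, so this cannot happen. By continuity $N(u)$ stays invertible on a smaller neighborhood $\Uopen \subseteq \Uopen_1$.

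Define $r(u) := N(u)^{-1} e_n$. Then $r$ is $C^1$, since matrix inversion is smooth. It is nonzero because $r_\star \cdot r(u) = 1$. It satisfies the $n-1$ chosen rows of $B(u)\,r(u) = 0$. The remaining rows also vanish: the kernel of the chosen $(n-1)\times n$ submatrix is one-dimensional (it has full rank near $u_\star$), and it contains $\ker B(u)$, which is nonzero. So the kernel of the submatrix equals $\ker B(u)$, and therefore $r(u) \in \ker B(u)$. This gives Eq.~\eqref{eq:local_eigenvector}.

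The only delicate point is showing that the bordered matrix $N(u)$ is invertible and that satisfying the chosen rows forces the full kernel condition. Both follow from the simplicity of the eigenvalue, which makes the rank exactly $n-1$, as argued above. An alternative route would use a nonzero column of $\operatorname{adj} B(u)$, which is also $C^1$. However, adjugate columns can vanish, and one would need to argue that some column is nonzero at $u_\star$ because the rank is $n-1$. The bordered approach avoids that case analysis.
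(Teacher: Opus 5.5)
Your proof is correct, but it follows a different route from the paper's.

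\textbf{The paper's route.} The paper applies the implicit function theorem once, to the bordered nonlinear eigenproblem $[-\lambda\,I + A(u)]\,r = 0$, $\ell_\star\,r = 1$, in the $n+1$ unknowns $(\lambda, r)$. Here $\ell_\star$ and $r_\star$ are left and right null vectors of $M_\star = -\lambda_\star\,I + A(u_\star)$, normalized so that $\ell_\star\,r_\star = 1$. Simplicity of $\lambda_\star$ enters as the nondegeneracy condition $\ell_\star\,r_\star \ne 0$. This condition makes the linearization $(\dot\lambda,\dot r)\mapsto(-\dot\lambda\,r_\star + M_\star\,\dot r,\ \ell_\star\,\dot r)$ injective, so $\lambda$ and $r$ are produced together.

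\textbf{Your route.} You decouple the problem into two steps. First, a scalar implicit function argument on $\det[-\lambda\,I + A(u)]$ produces $\lambda$; simplicity enters directly as $\partial_\lambda P \ne 0$. Second, $r$ comes from pure linear algebra: you invert a bordered matrix built from $n-1$ independent rows of $B(u_\star)$ together with $r_\star^T$. Your kernel comparison correctly shows that the discarded rows are satisfied automatically. That step only needs $\ker B(u) \ne 0$, which follows from $\det B(u) = 0$, plus full rank of the selected rows near $u_\star$.

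\textbf{What each buys.} Your argument is more elementary. It never uses left eigenvectors, and it avoids the characterization the paper invokes without proof: that the $\lambda$-derivative of the characteristic polynomial is nonzero exactly when $M_\star$ has rank $n-1$ and $\ell_\star\,r_\star \ne 0$. The price is a non-canonical choice of rows and a separate linear-algebra step after obtaining $\lambda$. The paper's argument is more compact and treats $\lambda$ and $r$ on an equal footing. It also yields the normalization $\ell_\star\,r(u) = 1$, the standard form of the nondegeneracy condition.
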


\begin{proof}
  Let $M_\star := -\lambda_\star\,I + A(u_\star)$.
  The derivative of the characteristic polynomial at $\lambda_\star$ is nonzero
  because the root $\lambda_\star$ is simple.
  This derivative is
  $p_\lambda(\lambda_\star, A(u_\star)) = \tr \adj M_\star$,
  which is nonzero if and only if $M_\star$ has rank $n - 1$
  and there exist left and right null vectors $\ell_\star$ and $r_\star$
  of $M_\star$ such that $\ell_\star\,r_\star = 1$.
  Consider the $C^1$ system of equations
  for $\lambda \in \Rset$ and $r \in \Rset^n$
  comprising Eq.~\eqref{eq:right_eigenvector}
  and $\ell_\star\,r = 1$.
  We obtain a local $C^1$ solution by applying the implicit function theorem
  at the point $(\lambda, r, u) = (\lambda_\star, r_\star, u_\star)$.
  To verify its hypotheses,
  we show that $(\dot{\lambda}, \dot{r}) = (0, 0)$
  is the only solution of the linear system
  \begin{align}
    -\dot{\lambda}\,r_\star + M_\star\,\dot{r} & = 0, \\
    \ell_\star\,\dot{r}                        & = 0
  \end{align}
  for $(\dot{\lambda}, \dot{r}) \in \Rset \times \Rset^n$.
  Applying $\ell_\star$ to the first equation shows that $\dot{\lambda} = 0$;
  then $M_\star\,\dot{r} = 0$ requires $\dot{r}$ to be a multiple of $r_\star$,
  hence the second equation forces $\dot{r} = 0$.
\end{proof}

A \emph{rarefaction integral curve} for the characteristic speed map $\lambda$
is a $C^1$ curve $\Uinteg : I \to \Uopen$,
where $I \subseteq \Rset$ is a nonempty open interval,
that solves the ordinary differential equation
\begin{equation}
  \frac{d \Uinteg}{d\eta} = r\bigl(\Uinteg(\eta)\bigr)
  \label{eq:rarefaction_locus_ODE}
\end{equation}
for $\eta \in I$.
The existence of a local $C^1$ flow for the $C^1$ vector field $r$,
which we state in the following lemma,
is standard.

\begin{lemma}
  \label{lem:local_rarefaction_flow}
  In the context of
  Lemma~\textup{\ref{lem:local_characteristic_speed_map_and_eigenvector_field}},
  there exist a neighborhood $\Uopen' \subseteq \Uopen$ of $u_\star$,
  a nonempty symmetric open interval $I \subseteq \Rset$,
  and a $C^1$ map $\Uinteg : I \times \Uopen' \to \Uopen$ such that
  \begin{align}
    \Uinteg_\eta(\eta, u) & = r\bigl(\Uinteg(\eta, u)\bigr),
    \label{eq:flow_ODE}                                      \\
    \Uinteg(0, u)         & = u
    \label{eq:flow_IV}
  \end{align}
  for all $\eta \in I$ and $u \in \Uopen'$.
  If $u$ is held fixed, $\eta \mapsto \Uinteg(\eta, u)$
  is the unique rarefaction integral curve through $u$.
  Moreover, for each fixed $\eta \in I$,
  the map $\Uinteg(\eta, \cdot)$ is a $C^1$ diffeomorphism
  between $\Uopen'$ and its image,
  the inverse being $\Uinteg(-\eta, \cdot)$.
\end{lemma}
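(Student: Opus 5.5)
This is the standard local existence theorem for $C^1$ vector fields, together with the group property of flows. The plan is to recall how those facts apply to $r$ on $\Uopen$, taking care only over symmetry of the time interval and the domain of the inverse.

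\emph{Step 1: local existence, uniqueness and $C^1$ dependence.} By Lemma~\ref{lem:local_characteristic_speed_map_and_eigenvector_field}, $r : \Uopen \to \Rset^n\setminus\{0\}$ is $C^1$, hence locally Lipschitz. By Picard--Lindel\"of, each initial state $u \in \Uopen$ has a unique maximal solution of \eqref{eq:flow_ODE}--\eqref{eq:flow_IV}. Choose a closed ball $\overline{B}(u_\star, 2\rho) \subseteq \Uopen$, let $K$ bound $\abs{r}$ there, and set $\Uopen'' := B(u_\star, \rho)$ and $I := (-\rho/K, \rho/K)$. A priori, any solution starting in $\Uopen''$ stays in $\overline{B}(u_\star, 2\rho)$ for $\eta \in I$, so it exists on all of $I$. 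The theorem on differentiable dependence on initial conditions (valid for $C^1$ fields) then gives that $\Uinteg$ is $C^1$ on $I \times \Uopen''$. Uniqueness identifies $\eta \mapsto \Uinteg(\eta, u)$ as the unique rarefaction integral curve through $u$, in the sense of maximal solutions restricted to $I$.

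\emph{Step 2: the group property.} The only point needing care is that $\Uinteg(-\eta,\cdot)$ must be defined on the image $\Uinteg(\eta, \Uopen')$. To arrange this, shrink to $\Uopen' := B(u_\star, \rho/2)$ and $I' := (-\rho/(2K), \rho/(2K))$. For $u \in \Uopen'$ and $\eta \in I'$, the point $\Uinteg(\eta, u)$ lies in $B(u_\star, \rho) = \Uopen''$, so $\Uinteg(-\eta, \cdot)$ is defined there. The curve $\sigma \mapsto \Uinteg(\sigma, \Uinteg(\eta, u))$ and the curve $\sigma \mapsto \Uinteg(\sigma + \eta, u)$ solve the same ODE with the same initial value, so uniqueness gives $\Uinteg(-\eta, \Uinteg(\eta, u)) = u$.

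\emph{Step 3: the diffeomorphism claim.} By Step 2, $\Uinteg(\eta,\cdot)$ is injective on $\Uopen'$ with $C^1$ left inverse $\Uinteg(-\eta,\cdot)$. Since this inverse satisfies the same relation in reverse, it maps the image back to $\Uopen'$. The image is open: the Jacobian $D_u\Uinteg(\eta,u)$ solves the linear variational equation with initial value $I$, so it is invertible, and the inverse function theorem applies. Hence $\Uinteg(\eta,\cdot)$ is a $C^1$ diffeomorphism onto its image with inverse $\Uinteg(-\eta,\cdot)$, as claimed, with $I'$ playing the role of $I$ in the statement.

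The bookkeeping in Step 2 is the only real obstacle, and it is handled entirely by the halving of radii.
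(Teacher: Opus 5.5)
Your proposal is correct and is exactly the standard local-flow argument (Picard--Lindel\"of, $C^1$ dependence on initial data, and the group property) that the paper invokes without proof, simply calling the lemma standard. Your halving of radii is a worthwhile addition: it guarantees that $\Uinteg(-\eta,\cdot)$ is defined on $\Uinteg(\eta,\Uopen')$, which in general is not contained in $\Uopen'$ itself.
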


As a corollary, there is a coordinate system near $u_\star$
in which the rarefaction integral curves corresponding to $\lambda$
are parallel straight lines.
See Fig.~\ref{fig:straighten}
for an illustration when $n = 2$.
This coordinate system plays a crucial role in the proof of
Theorem~\ref{th:rarefaction_intervals}.

\begin{figure}
  \centering
  \includegraphics[width=\textwidth]{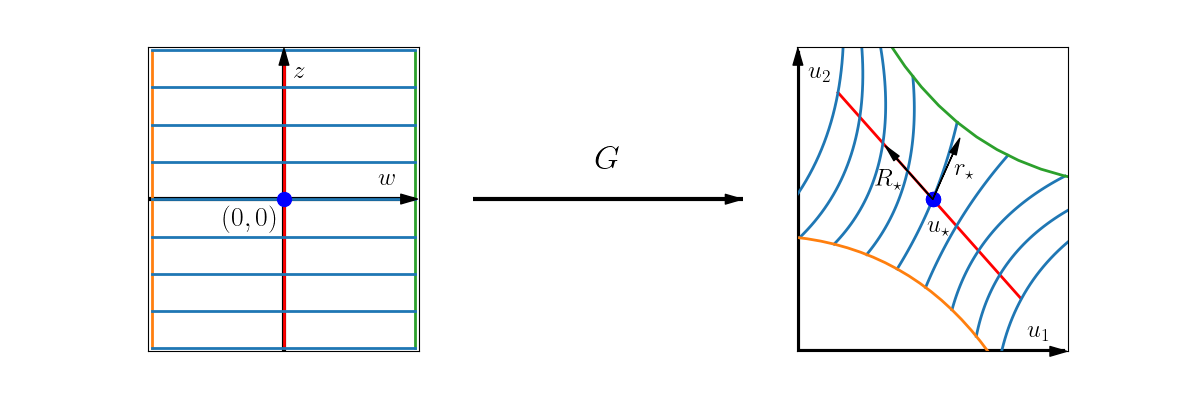}
  \caption[Straightened rarefaction integral curves.]{
    There exist $(w, z)$-coordinates for a neighborhood of $u_\star$
    in which rarefaction integral curves are straight.
  }
  \label{fig:straighten}
\end{figure}

\begin{corollary}[Straightening Rarefaction Integral Curves]
  \label{cor:straightening_rarefaction_integral_curves}
  Let $u_\star \in \bbU_{SH}$, and suppose that $\lambda_\star \in \Rset$
  is a root of the characteristic polynomial at $u_\star$.
  Then there exist an open interval $I \subseteq \Rset$ containing $0$,
  an open ball $B \subseteq \Rset^{n-1}$ centered at $0$,
  an open neighborhood $\Uopen \subseteq \bbU_{SH}$ of $u_\star$,
  and a $C^1$ diffeomorphism $G : I \times B \to \Uopen$
  such that $G(0,0) = u_\star$ and, for each fixed $z \in B$,
  the curve $w \mapsto G(w, z)$ for $w \in I$ is a rarefaction integral curve.
  Explicitly, with $F := f\comp G$ and $\mu := \lambda\comp G$,
  \begin{align}
    & \text{$-\mu\,G_w + F_w = 0$ throughout $I \times B$ and}
    \label{eq:partial_w_corollary}                              \\
    & \text{$(-\mu\,G_z + F_z)(0, 0)$ has rank $n - 1$.}
    \label{eq:partial_z_corollary}
  \end{align}
\end{corollary}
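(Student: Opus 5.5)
The plan is the standard flow-box construction for the nonvanishing vector field $r$. We apply Lemma~\ref{lem:local_characteristic_speed_map_and_eigenvector_field} to obtain $\lambda$ and $r$ on a neighborhood of $u_\star$, and Lemma~\ref{lem:local_rarefaction_flow} to obtain the flow $\Uinteg$ on $I_0 \times \Uopen'$. Since $r_\star := r(u_\star) \ne 0$, we choose a linear injection $L : \Rset^{n-1} \to \Rset^n$ whose image is a hyperplane complementary to $r_\star$; for instance, take $L$ onto $\ker \ell_\star$, which is complementary because $\ell_\star\,r_\star = 1$. We then define
\begin{equation*}
  G(w, z) := \Uinteg(w, u_\star + L z)
\end{equation*}
for $w$ in a small interval around $0$ and $z$ in a small ball around $0$. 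These are small enough that $u_\star + L z \in \Uopen'$. Since $\Uinteg$ is $C^1$, the map $G$ is $C^1$, and $G(0,0) = u_\star$.

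Next we compute the derivative at the origin. By Eqs.~\eqref{eq:flow_ODE} and~\eqref{eq:flow_IV}, $G_w(0,0) = r_\star$ and $G_z(0,0) = D_u\Uinteg(0, u_\star)\,L = L$, since $\Uinteg(0,\cdot)$ is the identity. So $DG(0,0) = [\,r_\star \;\; L\,]$, which is invertible by complementarity. The inverse function theorem then gives an open interval $I \ni 0$, an open ball $B$ centered at $0$, and an open neighborhood $\Uopen$ of $u_\star$, contained in $\bbU_{SH}$ and in the domain of $\lambda$ and $r$, such that $G : I \times B \to \Uopen$ is a $C^1$ diffeomorphism. Here we shrink to the product $I \times B$ inside the neighborhood where $G$ is a local diffeomorphism and take $\Uopen := G(I \times B)$, which is open. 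For fixed $z$, Eq.~\eqref{eq:flow_ODE} gives $G_w(w,z) = r\bigl(G(w,z)\bigr)$, so $w \mapsto G(w,z)$ is a rarefaction integral curve.

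For Eq.~\eqref{eq:partial_w_corollary}, the chain rule gives $F_w = A(G)\,G_w = A(G)\,r(G) = \lambda(G)\,r(G) = \mu\,G_w$, by Eq.~\eqref{eq:local_eigenvector}. For Eq.~\eqref{eq:partial_z_corollary}, at the origin we have $(-\mu\,G_z + F_z)(0,0) = [-\lambda_\star I + A(u_\star)]\,L = M_\star L$. Now $M_\star$ has rank $n-1$ with kernel spanned by $r_\star$, and $L$ is injective with image meeting $\operatorname{span}\{r_\star\}$ only in $0$. Hence $M_\star L$ is injective on $\Rset^{n-1}$, so it has rank $n-1$.

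The main obstacle is modest bookkeeping rather than a real difficulty. We must ensure that the domains shrink consistently: the flow's domain, the neighborhood where $\lambda$ and $r$ are defined, and the product shape $I \times B$ required by the statement all have to fit together. The one substantive point is choosing the transversal $L$ so that the rank condition holds, and taking $L$ onto $\ker\ell_\star$ settles this.
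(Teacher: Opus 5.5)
Your proposal is correct and follows the paper's proof essentially step for step. The paper likewise sets $G(w, z) := \Uinteg(w, u_\star + R_\star\,z)$, with the columns of $R_\star$ completing $r_\star$ to a basis (your $L$ onto $\ker \ell_\star$ is one such choice). It reads off $D G(0,0) = (r_\star, R_\star)$, shrinks $I \times B$ to get the diffeomorphism, and obtains both identities from $-\mu\,D G + D F = \bigl[(-\lambda\,I + D f)\comp G\bigr] D G$; your argument that $M_\star L$ is injective, because $\ker M_\star = \mathrm{span}\{r_\star\}$ meets the image of the injective map $L$ only in $0$, supplies the rank-$(n-1)$ justification that the paper leaves implicit.
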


\begin{proof}
  Invoke Lemma~\ref{lem:local_characteristic_speed_map_and_eigenvector_field}.
  Choose a basis for $\Rset^n$ that includes $r_\star$,
  and let the columns of the $n \times (n-1)$ matrix $R_\star$
  be the other $n - 1$ basis elements.
  Let the open ball $B \subseteq \Rset^{n-1}$ centered at $0$ be such that
  $u_\star + R_\star\,z \in \Uopen'$ if $z \in B$.
  Define $G : I \times B \to \Uopen$ by
  \begin{equation}
    G(w, z) := \Uinteg(w, u_\star + R_\star\,z)
    \label{eq:G_definition}
  \end{equation}
  for $(w, z) \in I \times B$.
  Evaluated at $(w, z) = (0, 0)$,
  $G_w = r_\star$ and, by Eq.~\eqref{eq:flow_IV},
  $G_z = R_\star$.
  Hence, $D G(0, 0) = (r_\star, R_\star)$ is invertible.
  After possibly shrinking $I$ and $B$
  and redefining $\Uopen$ to be the image of $G$,
  we obtain the diffeomorphism $G$.

  Now, $D F = D(f\comp G) = \left[(D f)\comp G\right] D G$, so that
  \begin{equation}
    -\mu\,D G + D F = \left[(-\lambda\,I + D f)\comp G\right] D G.
    \label{eq:mu_DG_plus_DF}
  \end{equation}
  Differentiating definition~\eqref{eq:G_definition}
  with respect to $w$ gives $G_w = \Uinteg_\eta = r\comp G$.
  Therefore, Eq.~\eqref{eq:mu_DG_plus_DF}
  implies that
  \begin{equation}
    -\mu\,G_w + F_w = \left[(-\lambda\,I + D f)\comp G\right] r\comp G = 0.
  \end{equation}
  Also,
  \begin{align}
    \left(-\mu\,G_z + F_z\right)(0, 0)
    & = \left[-\lambda\,I + D f(u_\star)\right] R_\star
  \end{align}
  has rank $n - 1$.
\end{proof}

\begin{remark}
  \label{rem:Riemann_invariants}
  We emphasize that
  Corollary~\ref{cor:straightening_rarefaction_integral_curves}
  does not rely on the existence of a coordinate system of $n$
  Riemann invariants
  for system~\eqref{eq:conslaw}.
  We straighten rarefaction integral curves for only one characteristic speed.
  \qed
\end{remark}

\subsection{Locally transformed solution}
\label{sec:locally_transformed_solution}

The $C^1$ diffeomorphism
of Corollary~\ref{cor:straightening_rarefaction_integral_curves}
transforms the system of conservation laws~\eqref{eq:conslaw}
into a form with a special property,
which we now highlight.
This property is utilized in the proof of
Theorem~\ref{th:rarefaction_intervals}.

Consider a nonlinear change of variables
\begin{equation}
  u = G(U).
  \label{eq:transform}
\end{equation}
Corresponding to the flux $f$ is
\begin{equation}
  F := f\comp G,
\end{equation}
so that
\begin{equation}
  f(u) = F(U)
\end{equation}
when relationship~\eqref{eq:transform}
holds.
In a sense,
the PDE~\eqref{eq:conslaw}
becomes
\begin{equation}
  G(U)_t + F(U)_x = 0.
\end{equation}

To be precise,
suppose that $u : \bbH \to \bbU$
is a weak solution of system~\eqref{eq:conslaw}:
$u \in L^\infty(\bbH; \Rset^n)$ and
\begin{equation}
  \int_0^\infty \int_{-\infty}^\infty
  \left[\phi_t\cdot u + \phi_x\cdot f(u)\right] d x\,d t = 0
  \ \text{for all $\phi \in C^1_c(\bbH; \Rset^n)$.}
  \label{eq:weak_condition_repeat}
\end{equation}
Also assume that every point in a bounded open subset $\Omega \subseteq \bbH$
is an \ESSIM{} continuity point for $u$.
By Lemma~\ref{lem:continuity_lemma},
$\overline{u}$ is continuous in $\Omega$
and $\overline{u} = u$ a.e.\ in $\Omega$.
Fix $(x_\star, t_\star) \in \Omega$
and set $u_\star := \overline{u}(x_\star, t_\star) \in \bbU_{SH}$.

Apply Corollary~\ref{cor:straightening_rarefaction_integral_curves}
to find the $C^1$ diffeomorphism $G : I \times B \to \Uopen$,
where $\Uopen$ is an open neighborhood of $u_\star$ in $\bbU_{SH}$.
Replace $\Omega$ by
$\overline{u}^{\,-1}[\Uopen]$,
which is open because $\overline{u}$ is continuous.
Define $U : \Omega \to I \times B$ through
\begin{equation}
  U := G^{-1}\comp\left(\overline{u}\restrict_\Omega\right).
\end{equation}
In condition~\eqref{eq:weak_condition_repeat},
with $u$ replaced by $\overline{u}$,
restrict to test functions $\phi$ with support contained in $\Omega$.
Then this condition becomes
\begin{equation}
  \int_0^\infty \int_{-\infty}^\infty
  \left[\phi_t\cdot G(U) + \phi_x\cdot F(U)\right] d x\,d t = 0
  \ \text{for all $\phi \in C^1_c(\Omega; \Rset^n)$.}
  \label{eq:weak_condition_transformed}
\end{equation}

Similarly, suppose that $v$ is a self-similar weak solution
and that every point in an open interval $J \subseteq \Rset$
is an \ESSIM{} continuity point for $v$.
By Lemma~\ref{lem:continuity_lemma_state_function},
$\overline{v} : J \to \Rset^n$ is continuous in $J$
and $\overline{v} = v$ a.e.\ in $J$.
Fix $\xi_\star \in J$
and set $u_\star := \overline{v}(\xi_\star)$.
Replace $J$ by $\overline{v}^{\,-1}[\Uopen]$ and define
\begin{equation}
  V := (w, z) := G^{-1}\comp\left(\overline{v}\restrict_J\right).
\end{equation}
Then condition~\eqref{eq:self-similar_weak_condition},
with $v$ replaced by $\overline{v}$ and $\psi$ restricted to have
support contained in $J$,
becomes
\begin{equation}
  \int_{-\infty}^\infty
  \left[-\frac{d}{d\xi}(\xi\,\psi)\cdot G(V)
  + \frac{d\psi}{d\xi}\cdot F(V)\right] d\xi = 0
  \ \text{for all $\psi \in C^1_c(J; \Rset^n)$.}
  \label{eq:self-similar_weak_condition_transformed}
\end{equation}

The development in Sec.~\ref{sec:Dafermos_function}
shows that this condition holds if and only if the Dafermos function
\begin{equation}
  \mD(\xi) := (-\xi\,G + F)\bigl(V(\xi)\bigr)
  + \int_{\xi_0}^\xi G\bigl(V(\xi')\bigr)\,d\xi'
  \label{eq:Dafermos_function_transformed}
\end{equation}
is such that
\begin{equation}
  \mD(\xi) - \mD(\zeta) = 0 \ \text{for all $\zeta$, $\xi \in J$.}
  \label{eq:D_xi_minus_D_zeta=0_transformed}
\end{equation}

\subsection{Smooth rarefaction waves}
\label{sec:smooth_rarefaction_waves}
In Sec.~\ref{sec:features_in_solutions},
we saw that one type of continuous self-similar solution
is a smooth rarefaction wave.
For such a solution, $d v/d \xi$ is an eigenvector of $A(v)$
with corresponding eigenvalue $\xi$.
In terms of the characteristic speed function $\lambda$
and local eigenvector field $r$ constructed in
Lemma~\ref{lem:local_characteristic_speed_map_and_eigenvector_field},
\begin{equation}
  \frac{d v}{d\xi} = \kappa\,r(v)
  \label{eq:propto}
\end{equation}
with the proportionality factor $\kappa$ determined by the resonance condition
\begin{equation}
  \xi = \lambda\bigl(v(\xi)\bigr).
  \label{eq:resonance_repeated}
\end{equation}
Differentiating Eq.~\eqref{eq:resonance_repeated}
shows that
\begin{equation}
  D\lambda(v)\,\frac{d v}{d\xi} = 1.
  \label{eq:derivative_of_resonance_repeated}
\end{equation}
Combining Eqs.~\eqref{eq:propto}
and~\eqref{eq:derivative_of_resonance_repeated},
we obtain
\begin{equation}
  \frac{d v}{d\xi} = \left[D\lambda(v)\,r(v)\right]^{-1} r(v).
  \label{eq:rarefaction_wave_ODE}
\end{equation}
Because it only makes sense when $D\lambda(v)\,r(v) \ne 0$,
this formula motivates the following definition.

\begin{definition}
  \label{def:genuinely_nonlinear}
  In the context of
  Lemma~\ref{lem:local_characteristic_speed_map_and_eigenvector_field},
  the characteristic speed function $\lambda$ is said to be
  \emph{genuinely nonlinear} at the state $u_\star \in \bbU_{SH}$ if
  \begin{equation}
    D\lambda(u_\star)\,r(u_\star) \ne 0.
    \label{eq:genuinely_nonlinear}
  \end{equation}
\end{definition}
\par\noindent
Under the assumption of genuine nonlinearity at the state $u_*$,
the ODE Eq.~\eqref{eq:rarefaction_wave_ODE}
for a smooth rarefaction wave is valid near $u_\star$.

\subsection{General rarefaction waves}
\label{sec:general_rarefaction_waves}
The requirement of genuine nonlinearity in rarefaction waves is unduly restrictive.
For example,
consider the scalar conservation law with flux function $f(u) = u^4 / 4$.
As $\lambda(u) = u^3$ and $D\lambda(u) = 3\,u^2$,
$\lambda$ is not genuinely nonlinear at $u = 0$.
Nonetheless,
the PDE
\begin{equation}
  u_t + \left(\tfrac{1}{4}\,u^4\right)_x = 0
\end{equation}
has a continuous self-similar weak solution, \viz
\begin{equation}
  v(\xi) =
  \begin{cases}
    -1                         & \text{if $\xi \le -1$,}   \\
    (\sgn \xi) \abs{\xi}^{1/3} & \text{if $-1 < \xi < 1$,} \\
    \phantom{-}1               & \text{if $\xi \ge 1$}
  \end{cases}
  \label{eq:u^4_rarefaction}
\end{equation}
that satisfies the resonance condition for $\xi \in (-1, 1)$.
We regard solution~\eqref{eq:u^4_rarefaction}
as a rarefaction wave in this interval,
even though it fails to be differentiable at $\xi = 0$.
Similar solutions occur in systems of conservation laws
when a rarefaction integral curve is tangent to the
set of states where genuine nonlinearity fails.

The fundamental features of a rarefaction wave are
(1)~$v$ lies along integral curve for $\lambda$ and (2)~$v$ is resonant.
Feature~(1) means that the map $\xi \mapsto v(\xi)$ is a reparametrization
of a smooth integral curve $\eta \mapsto \Uinteg(\eta)$ in terms of
speed $\xi$:
\begin{equation}
  v(\xi) := \Uinteg\bigl(w(\xi)\bigr),
\end{equation}
where $w$ maps an open interval $J$ continuously onto
an open interval $I$ within the domain of $\Uinteg$.
With $\widehat{\lambda} := \lambda\comp\Uinteg$
denoting the \emph{characteristic speed function along $\Uinteg$},
$\lambda \comp v = \widehat{\lambda} \comp w$,
so that feature~(2),
\viz the resonance condition~\eqref{eq:resonance_repeated},
entails
\begin{equation}
  \xi = \widehat{\lambda}\bigl(w(\xi)\bigr)
  \label{eq:resonance_again}
\end{equation}
for all $\xi \in J$.

\begin{lemma}
  \label{lem:rarefaction_wave}
  Let $\Uinteg$ be a rarefaction integral curve.
  There exists a reparametrization of $\Uinteg$ by speed $\xi$
  if and only if the characteristic speed function along $\Uinteg$
  is strictly monotone.
\end{lemma}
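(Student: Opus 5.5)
The statement reduces to elementary facts about continuous real functions. I interpret ``reparametrization of $\Uinteg$ by speed $\xi$'' as in the preceding discussion. It means a continuous surjection $w : J \to I$ from an open interval $J$ onto the domain $I$ of $\Uinteg$, such that the resonance relation~\eqref{eq:resonance_again}, $\xi = \widehat{\lambda}\bigl(w(\xi)\bigr)$, holds for all $\xi \in J$. Here $\widehat{\lambda} := \lambda\comp\Uinteg$ is $C^1$, hence continuous, on $I$. The proof splits into the two implications.

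\emph{Sufficiency.} Suppose $\widehat{\lambda}$ is strictly monotone on $I$. A continuous strictly monotone function on an open interval maps it homeomorphically onto an open interval. So I set $J := \widehat{\lambda}[I]$, which is open by the intermediate value theorem and strict monotonicity, and define $w := \widehat{\lambda}^{-1} : J \to I$. The inverse of a continuous strictly monotone function on an interval is continuous. Thus $w$ is a continuous bijection of $J$ onto $I$ with $\widehat{\lambda}\comp w = \mathrm{id}_J$. Hence $v := \Uinteg\comp w$ is a reparametrization of $\Uinteg$ by speed, satisfying resonance.

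\emph{Necessity.} Suppose $w : J \to I$ is continuous and onto, with $\widehat{\lambda}\bigl(w(\xi)\bigr) = \xi$ for all $\xi \in J$. This identity forces $w$ to be injective: $w(\xi_1) = w(\xi_2)$ implies $\xi_1 = \widehat{\lambda}(w(\xi_1)) = \widehat{\lambda}(w(\xi_2)) = \xi_2$. So $w$ is a continuous bijection between intervals, hence strictly monotone. Its inverse is $w^{-1} = \widehat{\lambda}$ on $I$, since each $\eta \in I$ equals $w(\xi)$ for some $\xi$ and then $\widehat{\lambda}(\eta) = \xi = w^{-1}(\eta)$. The inverse of a strictly monotone bijection is strictly monotone in the same sense, so $\widehat{\lambda}$ is strictly monotone on $I$.

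The only point needing care, and the main potential obstacle, is the precise meaning of ``reparametrization''. I expect the step most likely to need justification is that surjectivity onto all of $I$ is part of the definition; if only an image subinterval were required, one would restrict $\Uinteg$ to $w[J]$. Everything else is the standard fact that a continuous injective map of an interval is strictly monotone with continuous inverse.
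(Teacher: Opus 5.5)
Your proof is correct and follows essentially the same route as the paper. The resonance identity $\widehat{\lambda}\comp w = \mathrm{id}_J$ forces $w$ to be injective, hence a strictly monotone homeomorphism with inverse $\widehat{\lambda}$; conversely, $w := \widehat{\lambda}^{-1}$ supplies the reparametrization, and your remark about restricting $\Uinteg$ to $w[J]$ matches the paper's convention that $I$ is an open interval within the domain of $\Uinteg$.
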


\begin{proof}
  By Eq.~\eqref{eq:resonance_again},
  $w$ is necessarily injective
  and $\widehat{\lambda}$ is surjective onto $J$.
  Therefore, $w$ is strictly monotone, hence a homeomorphism,
  and $\widehat{\lambda}$, viewed as a map $I \to J$,
  is its inverse, hence is strictly monotone.
  Conversely,
  if $\widehat{\lambda}$ is strictly monotone,
  it is a homeomorphism from $I$ to its image $J$,
  and $w := \widehat{\lambda}^{-1}$
  serves to reparametrize $\Uinteg$ by $\xi$.
\end{proof}

Motivated by this lemma as well as by features~(1) and~(2) of rarefaction waves,
we adopt the following broad definition of rarefaction wave.

\begin{definition}
  \label{def:rarefaction_wave}
  Suppose that $\Uopen \subseteq \bbU_{SH}$ is open
  and $\lambda : \Uopen \to \Rset$ is a $C^1$ function
  such that $\lambda(u)$ is an eigenvalue of $A(u)$ for all $u \in \Uopen$.
  A \emph{rarefaction wave} is a continuous curve $v : J \to \Uopen$ of the form
  \begin{equation}
    v = \Uinteg\comp w,
  \end{equation}
  where
  \begin{itemize}
    \item $\Uinteg : I \to \Uopen$
      is a rarefaction integral curve for $\lambda$
      for a nonempty open interval $I \subseteq \Rset$;
    \item $w : J \to I$ is a homeomorphism
      between a nonempty open interval $J \subseteq \Rset$ and $I$; and
    \item $\lambda\bigl(v(\xi)\bigr) = \xi$ for all $\xi \in J$.
  \end{itemize}
  In this case, we say that \emph{$v$ lies along $\Uinteg$}
  and $w$ \emph{reparametrizes $\Uinteg$ by speed $\xi$}.
\end{definition}

Notably, $w$ is not required to be differentiable.
Although the images of the rarefaction integral curve $\Uinteg$
and the rarefaction wave $v = \Uinteg\comp w$ coincide,
the function $v$ is not necessarily smooth.
For instance,
genuine nonlinearity~\eqref{eq:genuinely_nonlinear}
can fail along a rarefaction wave, as illustrated above.
On the other hand,
$v$ does display significant regularity:
it is continuous and has bounded variation
(because $w$ is a monotone function).

Definition~\ref{def:rarefaction_wave}
is justified by the following result.

\begin{proposition}
  \label{prop:rarefaction}
  If $v : J \to \Uopen$ is a rarefaction wave,
  the corresponding Dafermos function $\mD$ is constant in $J$.
\end{proposition}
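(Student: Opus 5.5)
Since $\xi_0$ enters $\mD$ only through an additive constant, it suffices to show that
\[
\mD(\xi) - \mD(\zeta) = -\xi\,v(\xi) + f\bigl(v(\xi)\bigr) + \zeta\,v(\zeta) - f\bigl(v(\zeta)\bigr) + \int_\zeta^\xi v(\xi')\,d\xi'
\]
vanishes for all $\zeta$, $\xi \in J$. The obstacle is that $v = \Uinteg\comp w$ need not be differentiable, since $w$ is only a homeomorphism. So we cannot differentiate $\mD$ in $\xi$ directly. Instead we change variables to the parameter $\eta = w(\xi)$ along the integral curve, where everything is $C^1$.

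Write $\widehat{\lambda} := \lambda\comp\Uinteg$, which is $C^1$ on $I$. By the resonance condition in Def.~\ref{def:rarefaction_wave}, $\widehat{\lambda}\bigl(w(\xi)\bigr) = \xi$, so $\widehat{\lambda} = w^{-1}$ is a strictly monotone $C^1$ homeomorphism $I \to J$. Set $a := w(\zeta)$ and $b := w(\xi)$. The integral term then becomes a Riemann--Stieltjes integral,
\[
\int_\zeta^\xi \Uinteg\bigl(w(\xi')\bigr)\,d\xi' = \int_a^b \Uinteg(\eta)\,\widehat{\lambda}'(\eta)\,d\eta .
\]
This substitution is justified because $\widehat{\lambda}$ is $C^1$ and monotone: we use the change-of-variables formula for the $C^1$ map $\eta\mapsto\widehat{\lambda}(\eta)$ with continuous integrand $\Uinteg\comp w$, and no differentiability of $w$ is needed. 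Define the $C^1$ function
\[
\widehat{\mD}(\eta) := -\widehat{\lambda}(\eta)\,\Uinteg(\eta) + f\bigl(\Uinteg(\eta)\bigr) + \int_{a}^{\eta} \Uinteg(\eta')\,\widehat{\lambda}'(\eta')\,d\eta'.
\]
Then $\mD(\xi)-\mD(\zeta) = \widehat{\mD}(b) - \widehat{\mD}(a)$.

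Differentiating in $\eta$ gives
\[
\widehat{\mD}'(\eta) = -\widehat{\lambda}'\,\Uinteg - \widehat{\lambda}\,\Uinteg' + A(\Uinteg)\,\Uinteg' + \Uinteg\,\widehat{\lambda}' = \bigl[-\lambda(\Uinteg)\,I + A(\Uinteg)\bigr]\,r(\Uinteg) = 0.
\]
Here we used $\Uinteg' = r(\Uinteg)$ from Eq.~\eqref{eq:rarefaction_locus_ODE} and the eigenvector relation Eq.~\eqref{eq:local_eigenvector}. Hence $\widehat{\mD}$ is constant on $I$, so $\mD(\xi)=\mD(\zeta)$ for all $\zeta,\xi\in J$, and $\mD$ is constant in $J$.

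The only delicate step is the change of variables in the integral. It holds because $\widehat{\lambda}$ is $C^1$ and strictly monotone, so the substitution $\xi' = \widehat{\lambda}(\eta')$ is legitimate even where $\widehat{\lambda}' = 0$, which is precisely where genuine nonlinearity fails. The same observation shows that the formula is unaffected at points where $w$ fails to be differentiable.
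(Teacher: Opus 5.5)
Your proof is correct and follows essentially the same route as the paper. You pull the Dafermos function back to the $C^1$ parameter $\eta$ along the integral curve, show the pulled-back function has zero derivative via the eigenvector relation, and transfer back through the substitution $\xi' = \widehat{\lambda}(\eta')$ with $\widehat{\lambda} = w^{-1}$. Working with differences $\mD(\xi)-\mD(\zeta)$, and noting that the substitution needs only $\widehat{\lambda}\in C^1$ rather than differentiability of $w$, are minor clarifications that the paper leaves implicit.
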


\begin{proof}
  By hypothesis,
  $\Uinteg : I \to \Uopen$ is a rarefaction integral curve for $\lambda$ and
  $w : J \to I$ is a homeomorphism
  such that $v := \Uinteg\comp w$ satisfies
  $\lambda\bigl(v(\xi)\bigr) = \xi$ for all $\xi \in J$.

  Define $\widehat{\lambda} := \lambda\comp\Uinteg$,
  $\widehat{G} := \Uinteg$, $\widehat{F} := f\comp\Uinteg$, and
  \begin{equation}
    \widehat{\mD}(\eta) :=
    -\widehat{\lambda}(\eta)\,\widehat{G}(\eta) + \widehat{F}(\eta)
    + \int_{\eta_0}^\eta \widehat{G}(\eta')
    \,\frac{d\widehat{\lambda}}{d\eta}(\eta')\,d\eta'.
    \label{eq:D_hat}
  \end{equation}
  when $\eta_0$, $\eta \in I$.
  These functions are $C^1$ with respect to $\eta \in I$.
  Observe that $\widehat{\mD}(\eta)$ is independent of $\eta$ on $I$ because
  \begin{align}
    \frac{d}{d\eta} \widehat{\mD}
    & = -\frac{d\widehat{\lambda}}{d\eta}\,\widehat{G}
    -\widehat{\lambda}\,\frac{d\widehat{G}}{d\eta} + \frac{d\widehat{F}}{d\eta}
    + \widehat{G}\,\frac{d\widehat{\lambda}}{d\eta}
    = -\widehat{\lambda}\,\frac{d\widehat{G}}{d\eta} +
    \frac{d\widehat{F}}{d\eta} \\
    & = \left(-\lambda\,I + A\right)\comp\Uinteg\,\frac{d\Uinteg}{d\eta} = 0.
  \end{align}

  From $v = \Uinteg\comp w$ we deduce that
  $\widehat{G}\comp w = v$, $\widehat{F}\comp w = f(v)$,
  and $\widehat{\lambda}\bigl(w(\xi)\bigr) \equiv \xi$.
  Also,
  the $C^1$ change of integration variable $\xi' = \widehat{\lambda}(\eta')$
  and the relationship $w = \widehat{\lambda}^{-1}$ imply that
  $\eta' = w(\xi')$
  and $d\xi' = (d\widehat{\lambda}/d\eta)\,d\eta'$,
  so that
  \begin{equation}
    \int_{\eta_0}^\eta \widehat{G}(\eta')
    \,\frac{d\widehat{\lambda}}{d\eta}(\eta')\,d\eta'
    = \int_{\widehat{\lambda}(\eta_0)}^{\widehat{\lambda}(\eta)}
    \widehat{G}\bigl(w(\xi')\bigr)\,d\xi'
    = \int_{\xi_0}^{\widehat{\lambda}(\eta)} v(\xi')\,d\xi'
  \end{equation}
  with $\xi_0 := \widehat{\lambda}(\eta_0)$.
  Setting $\eta = w(\xi)$ in Eq.~\eqref{eq:D_hat} gives
  \begin{equation}
    \widehat{\mD}\bigl(w(\xi)\bigr)
    = -\xi\,v(\xi) + f\bigl(v(\xi)\bigr)
    + \int_{\xi_0}^\xi v(\xi')\,d\xi' = \mD(\xi).
  \end{equation}
  As $\widehat{\mD}$ is constant,
  $\mD$ is constant.
\end{proof}

\subsection{Rarefaction intervals}
\label{sec:rarefaction_intervals}

We now prove the counterpart to Prop.~\ref{prop:constancy}:
on an interval where a solution is continuous and \emph{resonant},
it is a rarefaction wave.

\begin{theorem}
  \label{th:rarefaction_intervals}
  Suppose that every point in a nonempty bounded open interval $J
  \subseteq \Rset$
  is an \ESSIM{} continuity point
  for the self-similar weak solution $v$.
  Let $\overline{v}$ denote the continuous representative of $v$ in $J$.
  If $\overline{v}$ is \emph{resonant} in $J$, meaning
  \begin{equation}
    \text{$\det\left[-\xi\,I + A\bigl(\overline{v}(\xi)\bigr)\right] = 0$
    for all $\xi \in J$,}
    \label{eq:resonance_in_theorem}
  \end{equation}
  then $\overline{v}$ has bounded variation in $J$
  and lies along a unique rarefaction integral curve for $\lambda$.
\end{theorem}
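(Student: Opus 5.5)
The plan is to work locally with the straightening chart of Cor.~\ref{cor:straightening_rarefaction_integral_curves}, show that the transverse coordinate $z$ of $\overline{v}$ is locally constant, and then globalize by connectedness. First, fix the family. For each $\xi\in J$, resonance~\eqref{eq:resonance_in_theorem} says that $\xi=\lambda_i(\overline{v}(\xi))$ for some $i$. By strict hyperbolicity and continuity of $\overline{v}$ (Lemma~\ref{lem:continuity_lemma_state_function}), the set of $\xi$ with $\lambda_i(\overline{v}(\xi))=\xi$ is closed in $J$. It is also open, because the other eigenvalues stay a positive distance away. Since $J$ is connected, a single family $i$ works throughout, and I write $\lambda=\lambda_i$, so that $\lambda(\overline{v}(\xi))=\xi$ on $J$.

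Next comes the local step. Fix $\xi_\star\in J$, take $G$ from Cor.~\ref{cor:straightening_rarefaction_integral_curves} at $u_\star=\overline{v}(\xi_\star)$, and set $V=(w,z)=G^{-1}\comp\overline{v}$ near $\xi_\star$. Then $\mu(w(\xi),z(\xi))=\xi$. By the pointwise version of \eqref{eq:D_xi_minus_D_zeta=0_transformed}, we have $\mD(\xi)=\mD(\zeta)$ for $\xi,\zeta$ near $\xi_\star$. Write $\Phi_\zeta(w,z):=(-\zeta G+F)(w,z)$ and split the difference through the intermediate point $(w(\xi),z(\zeta))$:
\begin{equation*}
0=\bigl[\Phi_\zeta(w(\xi),z(\xi))-\Phi_\zeta(w(\xi),z(\zeta))\bigr]
+\int_{w(\zeta)}^{w(\xi)}\bigl(\mu(s,z(\zeta))-\zeta\bigr)G_w(s,z(\zeta))\,ds
+E,
\end{equation*}
where the middle term uses \eqref{eq:partial_w_corollary}. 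The remainder is
\[
E=-(\xi-\zeta)\bigl[G(V(\xi))-G(V(\zeta))\bigr]+\int_\zeta^\xi\bigl[G(V(\xi'))-G(V(\zeta))\bigr]d\xi',
\]
and by continuity $E=o(\abs{\xi-\zeta})$. The first bracket equals $\bigl[(-\zeta G_z+F_z)(V(\zeta))+o(1)\bigr](z(\xi)-z(\zeta))$. By \eqref{eq:partial_z_corollary} this matrix has rank $n-1$, and its range is the range of $-\lambda I+A$, which is complementary to $r$. Let $P$ be the projection onto that range along $r(\overline{v}(\zeta))$. Applying $P$ gives
\[
\abs{z(\xi)-z(\zeta)}\le C\Bigl(\abs{P(\text{middle term})}+o(\abs{\xi-\zeta})\Bigr).
\]

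The main obstacle is the middle term. It is small only relative to $\abs{w(\xi)-w(\zeta)}$, and a priori $w$ may vary much faster than $\xi$, since genuine nonlinearity may fail. My first attempt would exploit two facts together. First, $PG_w(s,z(\zeta))=P\,r(G(s,z(\zeta)))$ vanishes at $s=w(\zeta)$, so it is $O(\abs{s-w(\zeta)})$. Second, $\mu(s,z(\zeta))-\zeta$ is small on the range of $s$ involved; here I would use $\mu(w(\zeta),z(\zeta))=\zeta$ together with $\mu(w(\xi),z(\xi))=\xi$. Then I would choose $\zeta$ as the point where $\abs{z(\cdot)-z(\xi_\star)}$ is maximal on a small closed subinterval, and bootstrap to $z(\xi)-z(\zeta)=o(\abs{\xi-\zeta})+o(1)\abs{z(\xi)-z(\zeta)}$. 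If this direct estimate fails, a fallback is to take the ordinary difference quotient in $\xi$ only along sequences where $\abs{\Delta w}\lesssim\abs{\Delta\xi}$, and to handle the complementary sequences by the monotonicity of $\mu$ forced by resonance. Either way the target is that $z$ is differentiable with derivative $0$, hence $z\equiv z_0$ locally. Then $\overline{v}(\xi)=G(w(\xi),z_0)$ lies on one rarefaction integral curve, $\widehat\lambda(w(\xi))=\xi$ makes $w$ injective and continuous, hence a monotone homeomorphism, and Lemma~\ref{lem:rarefaction_wave} and Def.~\ref{def:rarefaction_wave} apply locally.

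Finally, I would globalize. The set of $\xi\in J$ at which $\overline{v}$ lies near $\xi$ on the maximal integral curve through $\overline{v}(\xi_\star)$ is open by the local step. It is closed in $J$ by continuity of $\overline{v}$ and uniqueness of integral curves (Lemma~\ref{lem:local_rarefaction_flow}), so by connectedness it is all of $J$; this yields a single curve $\Uinteg$ and a monotone reparametrization $w$ on all of $J$, and uniqueness of the curve is automatic. For bounded variation, $\overline{v}=\Uinteg\comp w$ with $w$ monotone, so the variation of $\overline{v}$ on $J$ is at most the length of $\Uinteg$ over $w(J)$. I would bound this length using the compactness of $\essim v$ in $\bbU_{SH}$, the boundedness of $r$ there, and the boundedness of $J$. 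The point to check carefully is that $w(J)$ has finite extent: $\widehat\lambda$ is monotone and maps $w(J)$ onto the bounded interval $J$, but $d\widehat\lambda/d\eta$ may vanish. I expect to handle this by covering $\overline{J}$ with finitely many straightening charts, with endpoints included via the one-sided accumulation values.
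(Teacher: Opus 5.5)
Your setup is sound, and you have located the real difficulty. The open--closed argument fixing a single family on $J$ is correct, and the paper leaves that step implicit. The split through $(w(\xi),z(\zeta))$ and the bound $E=o(\abs{\xi-\zeta})$ are also correct. But the proposal does not resolve the difficulty.

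\emph{The middle term.} Your two ingredients are $P\,r\bigl(G(s,z(\zeta))\bigr)=O(\abs{s-w(\zeta)})$ and resonance at the two endpoints. The endpoint relations control $\mu(s,z(\zeta))-\zeta$ only by $\abs{s-w(\zeta)}$. So together they give just $\abs{P(\text{middle term})}\le C\abs{w(\xi)-w(\zeta)}^3$. That is not $o(\abs{\xi-\zeta})$ when $\widehat{\lambda}$ degenerates to high order, for example when $\abs{w(\xi)-w(\zeta)}\sim\abs{\xi-\zeta}^{1/5}$. Your fallback invokes monotonicity of $\mu(\cdot,z(\zeta))$, which is available only after $z$ is known to be constant, so it is circular.

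The missing idea is to use resonance at \emph{intermediate} points. Since $w$ is continuous, the intermediate value theorem gives, for each $s$ between $w(\zeta)$ and $w(\xi)$, some $\xi'$ between $\zeta$ and $\xi$ with $w(\xi')=s$. Resonance at $\xi'$ then gives $\abs{\mu(s,z(\zeta))-\zeta}\le\abs{\xi'-\zeta}+L\abs{z(\xi')-z(\zeta)}\le h+L\,\omega(h)$. Here $\abs{\xi-\zeta}\le h$ and $\omega(h):=\sup_{\abs{\xi'-\zeta}\le h}\abs{z(\xi')-z(\zeta)}$. The middle term is then at most $C\abs{w(\xi)-w(\zeta)}\,(h+L\,\omega(h))$, and no projection is needed. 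The $n\times(n-1)$ matrix multiplying $z(\xi)-z(\zeta)$ is uniformly injective. So taking the supremum over $\abs{\xi-\zeta}\le h$ gives $\omega(h)\le\epsilon(h)\,(h+\omega(h))$ with $\epsilon(h)\to0$. Hence $\omega(h)=o(h)$ and $z'\equiv0$.

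Two remarks on this fix:
\begin{itemize}
  \item The bootstrap must run on the oscillation $\omega$, not on $\abs{z(\xi)-z(\zeta)}$ alone. Your stated target form is reachable only if $\mu(\cdot,z(\zeta))$ is monotone.
  \item The paper integrates along the straight segment $W$ from $V(\zeta)$ to $V(\xi)$. It asserts $\int_0^1G(W)\,\frac{d}{d\tau}\mu(W)\,d\tau=[G(V(\zeta))+o(1)](\xi-\zeta)$ on the grounds that $\int_0^1\frac{d}{d\tau}\mu(W)\,d\tau=\xi-\zeta$. That step tacitly needs $\int_0^1\abs{\frac{d}{d\tau}\mu(W)}\,d\tau=O(\abs{\xi-\zeta})$, which is exactly the control you flagged. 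The same intermediate-value bound, applied after integrating by parts in $\tau$, supplies it.
\end{itemize}

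\emph{The bounded-variation step.} Write $J=(\xi_a,\xi_b)$. Nothing in the hypotheses makes $\Aessim^{+}(v;\xi_a)$ or $\Aessim^{-}(v;\xi_b)$ a singleton. The paper obtains the endpoint limits in Lemma~\ref{lem:endpoints} \emph{from} this theorem. So charts at endpoint values are not available, and using them would be circular.

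What your argument does yield is $\overline{v}=\Uinteg\comp w$ with $w$ monotone. That gives bounded variation on every compact subinterval of $J$. Bounded variation on all of $J$ further requires that the parameter interval $w(J)$ be bounded. Equivalently, the integral curve must have finite length while $\widehat{\lambda}$ tends to $\xi_a$ and $\xi_b$. You identified this point but did not settle it. The paper disposes of it by asserting that finitely many construction intervals $J_\star$ cover $J$ because $J$ is bounded. That assertion also presumes a compactness the open interval $J$ lacks, so your caution is warranted, but the needed argument is still missing.
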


\begin{proof}
  Let $\xi_\star \in J$,
  set $u_\star := \overline{v}(\xi_\star)$,
  and invoke Corollary~\ref{cor:straightening_rarefaction_integral_curves} to
  locally straighten rarefaction integral curves for
  a characteristic speed map $\lambda : \Uopen \to \Rset$,
  using a local diffeomorphism $G : I \times B \to \Uopen$
  such that $u_\star = G(0, 0)$.
  As discussed in Sec.~\ref{sec:locally_transformed_solution},
  we let $V := (w, z)$ with $w \in I \subseteq \Rset$ and $z \in B
  \subseteq \Rset^{n-1}$
  and transform coordinates through $v = G(V)$.
  Define $F := f\comp G$ and $\mu := \lambda\comp G$.
  The crucial features of $(w, z)$-coordinates are that
  \begin{align}
    & \text{$-\mu\,G_w + F_w = 0$ throughout $I \times B$ and}
    \label{eq:partial_w}                                        \\
    & \text{$(-\mu\,G_z + F_z)(0, 0)$ has rank $n - 1$.}
    \label{eq:partial_z}
  \end{align}
  Since $(w, z) \mapsto (-\mu\,G_z + F_z)(w, z)$ is continuous and
  has rank $n-1$ at $(0,0)$,
  after shrinking $I\times B$ we may assume that
  $(-\mu\,G_z + F_z)(w,z)$ has rank $n - 1$ throughout $I \times B$.

  Consider $V_1$, $V_2 \in I \times B$, which is convex,
  and introduce $W(\tau) := (1 - \tau)\,V_1 + \tau\,V_2$ for $\tau \in [0, 1]$.
  Observe that
  \begin{equation}
    \begin{split}
      & \frac{d}{d\tau}\left(-\mu\,G + F\right)\bigl(W(\tau)\bigr)
      = (-\mu\,G_w + F_w)\bigl(W(\tau)\bigr)\cdot (w_2 - w_1)                 \\
      & \qquad\qquad + (-\mu\,G_z + F_z)\bigl(W(\tau)\bigr)\cdot (z_2 - z_1)
      - G\bigl(W(\tau)\bigr)\,\frac{d}{d\tau}\,\mu\bigl(W(\tau)\bigr)
    \end{split}
  \end{equation}
  for all $\tau \in [0, 1]$.
  The key to the proof is that,
  by virtue of Eq.~\eqref{eq:partial_w},
  the first term on the right-hand side disappears.
  Integrating this identity with respect to $\tau$ from $0$ to $1$ yields
  \begin{equation}
    \begin{split}
      & \left(-\mu\,G + F\right)(V_2) - \left(-\mu\,G + F\right)(V_1)
      \\
      & \qquad = \int_0^1 (-\mu\,G_z + F_z)\bigl(W(\tau)\bigr)\,d\tau
      \cdot (z_2 - z_1)
      - \int_0^1 G\bigl(W(\tau)\bigr)
      \,\frac{d}{d\tau}\,\mu\bigl(W(\tau)\bigr)\,d\tau.
    \end{split}
    \label{eq:jump_in_D}
  \end{equation}

  Let $J_\star$ denote the connected component of
  $\overline{v}^{\,-1}[\Uopen]$ that contains $\xi_\star$.
  Define the continuous function
  $V := G^{-1}\comp\left(\overline{v}\restrict_{J_\star}\right)$
  and denote its component functions by $V = \bigl(w, z\bigr)$.
  The resonance hypothesis is that
  $\xi = \mu\bigl(V(\xi)\bigr)$ for all $\xi \in J_\star$.
  In Eq.~\eqref{eq:jump_in_D}
  we substitute $V(\zeta)$ for $V_1$ and $V(\xi)$ for $V_2$.
  By Eq.~\eqref{eq:Dafermos_function_transformed},
  the left-hand side of Eq.~\eqref{eq:jump_in_D}
  equals
  \begin{equation}
    \left(-\xi\,G + F\right)\bigl(V(\xi)\bigr)
    - \left(-\zeta\,G + F\right)\bigl(V(\zeta)\bigr)
    = \mD(\xi) - \mD(\zeta) -\int_{\zeta}^\xi G\bigl(V(\xi')\bigr)\,d\xi'.
    \label{eq:LHS_prelim}
  \end{equation}
  Here $\mD(\xi) - \mD(\zeta) = 0$
  by Eq.~\eqref{eq:D_xi_minus_D_zeta=0_transformed}.
  Because the integrand equals $G\bigl(V(\zeta)\bigr) + o(1)$
  uniformly for $\xi'$ between $\zeta$ and $\xi$,
  \begin{equation}
    \left(-\xi\,G + F\right)\bigl(V(\xi)\bigr)
    - \left(-\zeta\,G + F\right)\bigl(V(\zeta)\bigr)
    = \text{$-\left[G\bigl(V(\zeta)\bigr) + o(1)\right] (\xi - \zeta)$
    as $\xi \to \zeta$.}
    \label{eq:LHS}
  \end{equation}

  On the other hand,
  the first term on the right-hand side of Eq.~\eqref{eq:jump_in_D}
  equals
  \begin{equation}
    \text{$\left[(-\mu\,G_z + F_z)\bigl(V(\zeta)\bigr) + o(1)\right]
      \left[z(\xi) - z(\zeta)\right]$
    as $\xi \to \zeta$}
    \label{eq:RHS_1}
  \end{equation}
  because $(-\mu\,G_z + F_z)\bigl(W(\tau)\bigr)
  = (-\mu\,G_z + F_z)\bigl(V(\zeta)\bigr) + o(1)$
  uniformly for $\tau \in [0, 1]$.
  Finally,
  the second term on the right-hand side of Eq.~\eqref{eq:jump_in_D}
  equals
  \begin{equation}
    \text{$-\left[G\bigl(V(\zeta)\bigr) + o(1)\right] (\xi - \zeta)$
    as $\xi \to \zeta$}
    \label{eq:RHS_2}
  \end{equation}
  because $G\bigl(W(\tau)\bigr) = G\bigl(V(\zeta)\bigr) + o(1)$
  uniformly for $\tau \in [0, 1]$ and
  \begin{equation}
    \int_0^1 \frac{d}{d\tau}\,\mu\bigl(W(\tau)\bigr)\,d\tau
    = \mu\bigl(V(\xi)\bigr) - \mu\bigl(V(\zeta)\bigr)
    = \xi - \zeta.
  \end{equation}

  Consequently,
  Eq.~\eqref{eq:jump_in_D} says that
  \begin{equation}
    \text{$\left[(-\mu\,G_z + F_z)\bigl(V(\zeta)\bigr) + o(1)\right]
    \left[z(\xi) - z(\zeta)\right] = o(\abs{\xi - \zeta})$ as $\xi \to \zeta$.}
  \end{equation}
  Because of property~\eqref{eq:partial_z},
  this equation entails that
  $z(\xi)$ is differentiable,
  and its derivative is $0$, at $\xi = \zeta$.
  As $\zeta \in J_\star$ is arbitrary,
  $z$ has vanishing derivative throughout $J_\star$.
  Also, $\xi_\star \in J_\star$ and $z(\xi_\star) = 0$.
  Thus, $z(\xi) \equiv 0$ in $J_\star$.
  In other words,
  \begin{equation}
    \text{$\overline{v}(\xi) \equiv G\bigl(w(\xi), 0\bigr)$
    for $\xi \in J_\star$.}
    \label{eq:lies_along}
  \end{equation}

  Let $I_\star \subseteq I$ denote the image of $J_\star$ under $w$,
  and let $\mW : I_\star \to I \times B$ with $\omega \mapsto (\omega, 0)$
  be the corresponding rarefaction integral curve for $\mu$,
  which is mapped by $G$ to the rarefaction integral curve for $\lambda$.
  Then Eq.~\eqref{eq:lies_along}
  says that
  \begin{equation}
    \overline{v} = G\comp\mW\comp w
  \end{equation}
  in $J_\star$.
  Let us denote the characteristic speed along $\mW$ by
  $\widehat{\lambda} := \mu\comp\mW = \lambda\comp G\comp \mW$.
  As $\widehat{\lambda}\comp w = \lambda\comp \overline{v}$,
  the resonance hypothesis~\eqref{eq:resonance_in_theorem}
  implies that
  \begin{equation}
    \xi = \widehat{\lambda}\bigl(w(\xi)\bigr)
  \end{equation}
  for all $\xi \in J_\star$.
  Consequently, $w$ is injective, hence a homeomorphism,
  so that $w : J_\star \to I_\star$ reparametrizes $\mW$ by speed $\xi$.
  Thus, $\overline{v}$ lies along $\mW$ in $J_\star$.
  Since $\widehat{\lambda}$ is strictly monotone on $I_\star$
  (by Lemma~\ref{lem:rarefaction_wave})
  and $w=\widehat{\lambda}^{-1}$ on $J_\star$,
  $w$ has bounded variation on $J_\star$.
  Because $\omega \mapsto G(\omega, 0)$ is a $C^1$ map on $I_\star$,
  $\overline{v}(\xi) = G\bigl(w(\xi), 0\bigr)$ has bounded variation
  on $J_\star$.

  We perform this construction for each point $\xi_\star \in J$,
  obtaining the corresponding construction intervals $J_\star \subseteq J$
  in which $\overline{v}$ lies along a rarefaction integral curve.
  Any compact subinterval of $J$
  is covered by these intervals for finitely many points,
  and if two such intervals overlap,
  then $\overline{v}$ lies along the same integral curve in their union
  because the two rarefaction integral curves share a common point
  and the rarefaction integral curve through a point is unique.
  Thus, $\overline{v}$ lies along a single integral curve throughout $J$.
  Because $J$ is bounded,
  finitely many such intervals $J_\star$ cover $J$,
  and the total variation of $\overline{v}$ on $J$
  is bounded by the sum of the variations on this finite cover.
  Therefore, $\overline{v}$ has bounded variation in $J$.
\end{proof}

\section{Discontinuities}
\label{section:discontinuities}

\subsection{Discontinuities}
\label{sec:discontinuities}

Suppose that a self-similar weak solution $v$
of system~\eqref{eq:conslaw}
has an \ESSIM{} discontinuity at $\xi_\star \in \Rset$.
Then the \ESSIM{} accumulation sets
$\Aessim^-(v; \xi_\star)$ and $\Aessim^+(v; \xi_\star)$
are not equal to the same singleton.
For example, they could be different singletons (at a jump discontinuity)
or they could have multiple elements.
Nonetheless,
if $u^- \in \Aessim^-(v; \xi_\star)$
and $u^+ \in \Aessim^+(v; \xi_\star)$,
then $(u^-, \xi_\star, u^+)$ is an R-H jump.

\begin{proposition}
  \label{prop:discontinuities}
  Let $\xi_\star \in \Rset$ be an \ESSIM{} discontinuity point
  for the self-similar weak solution $v$.
  For any $u^- \in \Aessim^-(v; \xi_\star)$
  and $u^+ \in \Aessim^+(v; \xi_\star)$,
  the Rankine--Hugoniot condition
  \begin{equation}
    -\xi_\star\,(u^+ - u^-) + f(u^+) - f(u^-) = 0
    \label{eq:Rankine--Hugoniot_repeated}
  \end{equation}
  is satisfied.
\end{proposition}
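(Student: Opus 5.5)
The proof will be a direct application of Lemma~\ref{lem:discontinuity_lemma}, combined with the union formula~\eqref{eq:Aessim_union}. Let $\mF$ denote the moving frame flux of $v$ from Corollary~\ref{co:Dafermos}. The plan has three steps.

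First, I will observe that $\Aessim^\pm(v;\xi_\star)\subseteq\Aessim(v;\xi_\star)$ by Eq.~\eqref{eq:Aessim_union}. Hence both chosen states $u^-\in\Aessim^-(v;\xi_\star)$ and $u^+\in\Aessim^+(v;\xi_\star)$ lie in the two-sided accumulation set $\Aessim(v;\xi_\star)$.

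Second, I will apply Lemma~\ref{lem:discontinuity_lemma} to each of them, which yields
\begin{equation*}
  -\xi_\star\,u^- + f(u^-) = \mF(\xi_\star) = -\xi_\star\,u^+ + f(u^+).
\end{equation*}
Subtracting the two identities gives exactly Eq.~\eqref{eq:Rankine--Hugoniot_repeated}.

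Third, I will remark on what the argument does and does not use. The hypothesis that $\xi_\star$ is an \ESSIM{} discontinuity point is not needed for the identity itself. It only ensures that some choice with $u^+\ne u^-$ is possible, so that $(u^-,\xi_\star,u^+)$ is genuinely an R-H jump in the sense of Def.~\ref{def:Rankine--Hugoniot}. If $u^+=u^-$, the relation holds trivially.

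The only substantive ingredient is already contained in Lemma~\ref{lem:discontinuity_lemma}. That lemma transfers the a.e.\ identity $-\xi v+f(v)=\mF$ to every accumulation state, and it relies on the continuity of $\mF$ (it is Lipschitz) and on Lemma~\ref{lem:ae_to_Aessim}. So I expect no real obstacle here. The one point to check is that Lemma~\ref{lem:discontinuity_lemma} is stated for the two-sided set $\Aessim(v;\xi_\star)$, so the inclusion of the one-sided sets via Eq.~\eqref{eq:Aessim_union} is exactly what is needed. That inclusion is immediate, since each one-sided interval is contained in the symmetric interval and essential images are monotone.
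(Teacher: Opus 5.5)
Your proposal is correct and is essentially the paper's own proof: place $u^\pm$ in $\Aessim(v;\xi_\star)$ via Eq.~\eqref{eq:Aessim_union}, apply Lemma~\ref{lem:discontinuity_lemma} to each, and equate the two expressions through $\mF(\xi_\star)$. Your side remark is also accurate: the discontinuity hypothesis is not needed for the identity itself and only guarantees that a choice with $u^+\ne u^-$ exists.
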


\begin{proof}
  Let $\mF$ be the moving frame flux of $v$.
  By Eq.~\eqref{eq:Aessim_union},
  $u^-$, $u^+ \in \Aessim(v; \xi_\star)$,
  so that by Lemma~\ref{lem:discontinuity_lemma},
  $-\xi_\star\,u^- + f(u^-) = \mF(\xi_\star) = -\xi_\star\,u^+ + f(u^+)$.
\end{proof}

\begin{remark}
  \label{rem:proper}
  Although an \ESSIM{} accumulation set for a member of $L^\infty$ is compact,
  it can be uncountable.
  However,
  for a self-similar weak solution $v$,
  Lemma~\ref{lem:discontinuity_lemma}
  entails that $\Aessim(v; \xi_\star)$ is contained in a level set
  \begin{equation}
    \left\{\, u \in \bbU_{SH} \,:\, -\xi_\star\,u + f(u) =
    \mF_\star \,\right\}
  \end{equation}
  for some $\mF_\star \in \Rset^n$.
  Such level sets are severely constrained.
  For example,
  suppose the $C^2$ flux function $f$ is a \emph{proper map}
  (\ie the preimage under $f$ of a compact set is compact).
  If $\mF_\star \in \Rset^n$ is a \emph{regular value} of
  the map $u \mapsto -\xi_\star\,u + f(u)$,
  then the equation $-\xi_\star\,u + f(u) = \mF_\star$
  has finitely many solutions.
  By Sard's theorem, almost every $\mF_\star \in \Rset^n$ is a regular value.
  \qed
\end{remark}

The next result is a strengthened variant of
Prop.~\ref{prop:discontinuities}.
Central to its proof
is identifying left and right \ESSIM{} accumulation states that are distinct.
(In contrast, at a point where $v$ fails to be approximately continuous,
such distinct states might not exist.)
Lemma~\ref{lem:distinct}
is invoked in the proofs of Props.~\ref{prop:constancy_at_infinity}
and~\ref{prop:open}
below.

\begin{lemma}
  \label{lem:distinct}
  Let $\xi_\star \in \Rset$ be an \ESSIM{} discontinuity point
  for the self-similar weak solution $v$.
  There exist $u^- \in \Aessim^-(v; \xi_\star)$
  and $u^+ \in \Aessim^+(v; \xi_\star)$ with $u^+ \ne u^-$,
  \ie $(u^-, \xi_\star, u^+)$ is an R-H jump.
  Moreover,
  the matrix~\eqref{eq:matrix_M}
  is singular:
  \begin{equation}
    \det M(u^-, \xi_\star, u^+) = 0.
    \label{eq:singular}
  \end{equation}
\end{lemma}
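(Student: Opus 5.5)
The argument is a short case analysis on the one-sided accumulation sets, followed by Prop.~\ref{prop:discontinuities} and the matrix form~\eqref{eq:R-H_with_matrix_M} of the Rankine--Hugoniot condition.

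\emph{Choosing distinct states.} By the remark after Def.~\ref{def:essential-image_accumulation_set}, both $\Aessim^-(v; \xi_\star)$ and $\Aessim^+(v; \xi_\star)$ are nonempty and compact. Because $\xi_\star$ is an \ESSIM{} discontinuity point, these two sets are not the same singleton, which leaves two cases.
\begin{enumerate}
  \item[(i)] Both sets are singletons. Then they are distinct singletons, and I take $u^-$ and $u^+$ to be their single elements, so $u^+ \ne u^-$.
  \item[(ii)] One set, say $\Aessim^-(v; \xi_\star)$, contains two distinct states $a \ne b$. I pick any $c \in \Aessim^+(v; \xi_\star)$, which is possible by nonemptiness. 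Then $c$ differs from at least one of $a$, $b$, and I set $u^-$ equal to that one and $u^+ := c$. The case where $\Aessim^+(v; \xi_\star)$ has two elements is symmetric.
\end{enumerate}
In either case Prop.~\ref{prop:discontinuities} gives the Rankine--Hugoniot condition~\eqref{eq:Rankine--Hugoniot_repeated}. Since $u^+ \ne u^-$, the triple $(u^-, \xi_\star, u^+)$ is an R-H jump in the sense of Def.~\ref{def:Rankine--Hugoniot}.

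\emph{Singularity of $M$.} I first need $M(u^-, \xi_\star, u^+)$ to be defined. Each one-sided essential image is contained in $\essim v$, so the accumulation states lie in $\essim v \subseteq \bbU_{SH}$. The convex-hull hypothesis then places the segment from $u^-$ to $u^+$ inside $\bbU$. Hence $\overline{A}(u^-, u^+)$ is defined, and Eq.~\eqref{eq:Taylor} applies. By Eq.~\eqref{eq:R-H_with_matrix_M}, the Rankine--Hugoniot condition reads $M(u^-, \xi_\star, u^+)\,(u^+ - u^-) = 0$. The vector $u^+ - u^-$ is nonzero, so $M(u^-, \xi_\star, u^+)$ has a nontrivial kernel, and therefore $\det M(u^-, \xi_\star, u^+) = 0$.

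There is no substantial obstacle here. The only points needing care are the nonemptiness of both one-sided accumulation sets, which case (ii) relies on, and the fact that accumulation states lie in $\bbU_{SH}$, so that the averaged matrix $\overline{A}$ makes sense.
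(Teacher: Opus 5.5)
Your proposal is correct and follows essentially the same route as the paper: it obtains distinct $u^- \ne u^+$ from the failure of the two one-sided accumulation sets to be the same singleton, then reads off $\det M(u^-, \xi_\star, u^+) = 0$ from the matrix form of the Rankine--Hugoniot condition, with $u^+ - u^-$ as a nonzero kernel vector. Your case analysis only spells out the step the paper writes as ``Hence'', which relies on both one-sided sets being nonempty. Your check that $\overline{A}(u^-, u^+)$ is defined is a detail the paper leaves implicit.
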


\begin{proof}
  By the hypothesis that $v$ has an \ESSIM{} discontinuity at $\xi_\star$,
  it is not the case that
  $\Aessim^-(v; \xi_\star) = \Aessim^+(v; \xi_\star) = \{\ell\}$
  for some $\ell \in \Rset^n$.
  Hence, there exist $u^- \in \Aessim^-(v; \xi_\star)$
  and $u^+ \in \Aessim^+(v; \xi_\star)$
  such that $u^+ \ne u^-$.
  The Rankine--Hugoniot condition~\eqref{eq:Rankine--Hugoniot_repeated},
  written in the form~\eqref{eq:R-H_with_matrix_M},
  says that the nonzero vector $u^+ - u^-$
  belongs to the kernel of $M(u^-, \xi_\star, u^+)$.
\end{proof}

A consequence of
Prop.~\ref{prop:constancy}
and Lemma~\ref{lem:distinct}
is that a self-similar weak solution is essentially constant outside
a bounded interval.

\begin{proposition}
  \label{prop:constancy_at_infinity}
  Suppose $v$ is a self-similar weak solution
  of system~\eqref{eq:conslaw}.
  There exist $\xi_L$, $\xi_R \in \Rset$ such that
  $v$ is essentially constant in $(-\infty, \xi_L)$ and $(\xi_R, \infty)$.
\end{proposition}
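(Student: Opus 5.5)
The plan is to show that for $\abs{\xi}$ large, every point is an \ESSIM{} continuity point and the solution is non-resonant, and then invoke Prop.~\ref{prop:constancy}.

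\textbf{Step 1: a uniform speed bound.} Since $v$ is a reduced state function, $K := \essim v$ is a compact subset of $\bbU_{SH}$ (Lemma~\ref{lem:essential_image}). The matrix $\overline{A}$ is continuous on $K \times K$, so there is $\Lambda > 0$ with $\norm{\overline{A}(u^-, u^+)} \le \Lambda$ for all $u^\pm \in K$. In particular every eigenvalue of $A(u)$, $u \in K$, has modulus at most $\Lambda$. Moreover, for $\abs{\xi} > \Lambda$ the matrix $M(u^-, \xi, u^+) = -\xi\,I + \overline{A}(u^-, u^+)$ is invertible (Neumann series), uniformly in $u^\pm \in K$.

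\textbf{Step 2: no \ESSIM{} discontinuities outside $[-\Lambda, \Lambda]$.} Suppose $\xi_\star$ is an \ESSIM{} discontinuity point with $\abs{\xi_\star} > \Lambda$. Lemma~\ref{lem:distinct} provides $u^- \in \Aessim^-(v;\xi_\star)$ and $u^+ \in \Aessim^+(v;\xi_\star)$ with $\det M(u^-, \xi_\star, u^+) = 0$. These accumulation states lie in $\essim v = K$, since accumulation sets are intersections of essential images of subsets. This contradicts Step~1. Hence every point of the open sets $(-\infty, -\Lambda)$ and $(\Lambda, \infty)$ is an \ESSIM{} continuity point, with $\overline{v}(\xi) \in K$.

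\textbf{Step 3: non-resonance and constancy.} On $J = (\Lambda, \infty)$ we have $\det[-\xi\,I + A(\overline{v}(\xi))] \neq 0$, because $\abs{\xi}$ exceeds every characteristic speed at states of $K$. Prop.~\ref{prop:constancy} then gives $\overline{v}$ constant on $J$. It applies on arbitrary open intervals, and unboundedness causes no trouble since its proof is local. Lemma~\ref{lem:continuity_lemma_state_function} shows $v = \overline{v}$ a.e.\ in $J$, so $v$ is essentially constant there. The same argument works on $(-\infty, -\Lambda)$. Take $\xi_R = \Lambda$ and $\xi_L = -\Lambda$.

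The only delicate point is checking that \ESSIM{} accumulation states, and the continuous representative $\overline{v}$, take values in $\essim v$. This makes the bound $\Lambda$ apply to them. It follows directly from the definitions, since each $\Aessim^\pm$ is contained in an essential image of a subset, which is contained in $\essim v$, and $\overline{v}(\xi) \in \Aessim(v;\xi)$.
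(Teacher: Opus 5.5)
Your proof is correct and takes essentially the same route as the paper. Compactness of $\essim v$ gives a speed bound beyond which $\det\left[-\xi\,I + A(u)\right]$ and $\det M(u^-,\xi,u^+)$ are nonzero for all states in $\essim v$; Lemma~\ref{lem:distinct} then excludes \ESSIM{} discontinuities there, and Prop.~\ref{prop:constancy} yields constancy. Your explicit operator-norm bound and your check that accumulation states and $\overline{v}$ take values in $\essim v$ just make precise what the paper leaves implicit.
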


\begin{proof}
  By Lemma~\ref{lem:essential_image},
  the essential image of $v$ is a compact subset of $\bbU_{SH}$.
  Accordingly, there exist $\xi_L$, $\xi_R \in \Rset$ such that
  if $\xi > \xi_R$ or $\xi < \xi_L$,
  then
  \begin{align}
    \det \left[-\xi\,I + A(u)\right] & \ne 0, \\
    \det M(u^-, \xi, u^+)            & \ne 0
  \end{align}
  for all $u$, $u^-$, $u^+ \in \essim v$.
  Suppose that $\xi \in (-\infty, \xi_L)$ or $\xi \in (\xi_R, \infty)$.
  By Lemma~\ref{lem:distinct},
  $\xi$ is not an \ESSIM{} discontinuity point for $v$.
  By Prop.~\ref{prop:constancy},
  $v$ is essentially constant in the open intervals
  $(-\infty, \xi_L)$ and $(\xi_R, \infty)$.
\end{proof}

The values of $v$ in these intervals, denoted $u_L$ and $u_R$,
are the data for the Riemann problem solved by $v$.

\subsection{Traveling waves}
\label{sec:traveling_waves}

Proposition~\ref{prop:discontinuities}
has implications for the admissibility of discontinuities.
Suppose the system of conservation laws~\eqref{eq:conslaw}
is intended to model solutions of the hyperbolic-parabolic system
\begin{equation}
  u_t + f(u)_x = \varepsilon\,\left[ B(u)\,u_x \right]_x
  \label{eq:diffusive_conslaw}
\end{equation}
in the limit $\varepsilon \decr 0$.
Fix $\varepsilon > 0$ and specify a propagation speed $\xi_\star \in \Rset$.
A solution of the form
\begin{equation}
  u^\varepsilon(x, t) := w\bigl((x - \xi_\star\,t)/\varepsilon\bigr)
\end{equation}
is called a \emph{traveling wave}.
Introduce the variable $\zeta := (x - \xi_\star\,t)/\varepsilon$.
Then $w$ is a function of $\zeta$ that satisfies
the \emph{traveling wave dynamical system}
\begin{equation}
  B(w)\,\frac{d w}{d\zeta} = -\xi_\star\,w + f(w) - \mF_\star
  \label{eq:dyn_sys}
\end{equation}
for some constant of integration $\mF_\star \in \Rset^n$.

An equilibrium for the ODE~\eqref{eq:dyn_sys}
is a state $u \in \bbU$ that solves
\begin{equation}
  -\xi_\star\,u + f(u) = \mF_\star.
\end{equation}
If $u^- \in \bbU$ and $u^+ \in \bbU$ are equilibria,
then
\begin{equation}
  -\xi_\star\,u^- + f(u^-) = \mF_\star = -\xi_\star\,u^+ + f(u^+),
\end{equation}
so that the Rankine--Hugoniot condition~\eqref{eq:Rankine--Hugoniot}
holds for speed $s = \xi_\star$.
In other words,
$u^-$ and $u^+$ are distinct equilibria for system~\eqref{eq:dyn_sys}
if and only if $(u^-, \xi_\star, u^+)$ is an R-H jump.

Suppose that $w$ is an orbit for system~\eqref{eq:dyn_sys}
that leads from $u^-$ to $u^+$,
\ie $w(\zeta) \to u^-$ as $\zeta \to -\infty$
and $w(\zeta) \to u^+$ as $\zeta \to \infty$.
Now fix $(x, t)$ and let $\varepsilon \decr 0$:
\begin{equation}
  u^\varepsilon(x, t) \to u^0(x, t) :=
  \begin{cases}
    u^- & \text{if $x < \xi_\star\,t$}, \\
    u^+ & \text{if $x > \xi_\star\,t$}.
  \end{cases}
\end{equation}
The limiting function $u^0$ is the self-similar solution
associated with the R-H jump $(u^-, \xi_\star, u^+)$,
which contains a single jump discontinuity.
In this sense,
an isolated jump discontinuity corresponds to
the limit as $\varepsilon \decr 0$
of a traveling wave for system~\eqref{eq:diffusive_conslaw}
if and only if an orbit for system~\eqref{eq:dyn_sys}
leads from $u^-$ to $u^+$.

Evidently,
the traveling wave system~\eqref{eq:dyn_sys}
corresponding to a specified speed $\xi_\star \in \Rset$
and flux vector $\mF_\star \in \Rset^n$
may be expressed in terms of the moving frame flux function $f^{\xi_\star}$:
\begin{equation}
  B(w)\,\frac{d w}{d\zeta} = f^{\xi_\star}(w) - \mF_\star.
  \label{eq:dyn_sys_moving_frame}
\end{equation}
Equilibria for this system are states $u \in \bbU$ that solve
\begin{equation}
  f^{\xi_\star}(u) = \mF_\star.
  \label{eq:equilibrium_equation}
\end{equation}
By Lemma~\ref{lem:discontinuity_lemma}
every state $u \in \Aessim(v; \xi_\star)$,
\ie every \ESSIM{} accumulation value of $v$ at $\xi_\star$,
is an equilibrium.

Suppose that $v$ is a self-similar weak solution.
According to Theorem~\ref{th:Dafermos},
\begin{equation}
  \text{$f^{\xi_\star}\bigl(v(\xi)\bigr) = \mF(\xi)$ for almost every
  $\xi \in \Rset$}
\end{equation}
with $\mF$ being a Lipschitz continuous function.
At each $\xi_\star \in \Rset$,
we obtain a traveling wave ODE
by setting $\mF_\star := \mF(\xi_\star)$
in system~\eqref{eq:dyn_sys_moving_frame}.
From this perspective,
a self-similar weak solution $v$
determines a traveling wave ODE at each $\xi \in \Rset$,
and its \ESSIM{} accumulation values at $\xi$ are equilibria for this system.

\section{Solution structure}
\label{section:solution_structure}

\subsection{Partition of the speed axis}
\label{sec:partition_of_the_speed_axis}

Suppose $v$ is a self-similar weak solution
of system~\eqref{eq:conslaw}.
Closely paralleling Dafermos~\cite{Daf08,Daf26},
we define disjoint sets associated to $v$:
\begin{itemize}
  \item $\mS$ is the set of \ESSIM{} discontinuity points;
  \item $\mC$ is the set of \ESSIM{} continuity points
    that are \emph{non-resonant}, in that
    \begin{equation}
      \det\left[-\xi\,I + A\bigl(\overline{v}(\xi)\bigr)\right] \ne 0;
      \label{eq:not_an_eigenvalue}
    \end{equation}
  \item $\mW$ is the set of \ESSIM{} continuity points
    that are \emph{resonant}, in that
    \begin{equation}
      \det\left[-\xi\,I + A\bigl(\overline{v}(\xi)\bigr)\right] = 0.
      \label{eq:resonant}
    \end{equation}
\end{itemize}
Being disjoint, these sets form a partition of $\Rset$,
the axis for the speed variable $\xi = x/t$:
\begin{equation}
  \Rset = \mS \sqcup \mC \sqcup \mW,
  \label{eq:initial_partition}
\end{equation}

Dafermos~\cite{Daf26} recognized the following crucial topological properties.
The proof relies on Lemma~\ref{lem:distinct}
for \ESSIM{} discontinuities.

\begin{proposition}
  \label{prop:open}
  The set $\mS \sqcup \mW$ is closed and the set $\mC$ is open.
\end{proposition}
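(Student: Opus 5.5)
Since $\mC$ is the complement of $\mS \sqcup \mW$, it suffices to show that $\mC$ is open. Fix $\xi_\star \in \mC$, so $\xi_\star$ is an \ESSIM{} continuity point with value $u_\star := \overline{v}(\xi_\star)$ and $\det\left[-\xi_\star\,I + A(u_\star)\right] \ne 0$. We will find $r > 0$ such that every point of $(\xi_\star - r, \xi_\star + r)$ is an \ESSIM{} continuity point, and is non-resonant.

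First we use continuity of $M$. Because $\overline{A}(u,u) = A(u)$ and $\overline{A}$ is $C^1$, the map $(u^-, \xi, u^+) \mapsto \det M(u^-, \xi, u^+)$ is continuous. Its value at $(u_\star, \xi_\star, u_\star)$ is nonzero. Hence there are $\varepsilon > 0$ and $\delta > 0$ such that
\begin{equation*}
  \det M(u^-, \xi, u^+) \ne 0
\end{equation*}
whenever $\abs{u^\pm - u_\star} < \varepsilon$ and $\abs{\xi - \xi_\star} < \delta$. In particular, taking $u^- = u^+ = u$ gives $\det\left[-\xi\,I + A(u)\right] \ne 0$ for such $u$ and $\xi$.

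Next we use the accumulation-set definition. Since $\Aessim(v; \xi_\star) = \{u_\star\}$ is the intersection of the nested compact sets $\essim v\restrict_{(\xi_\star - r,\, \xi_\star + r)}$, a standard compactness argument (Lemma~\ref{lem:monotonicity} and Lemma~\ref{lem:essential_image}) gives some $r \in (0, \delta)$ with
\begin{equation*}
  \essim v\restrict_{(\xi_\star - r,\, \xi_\star + r)} \subseteq \{\, u : \abs{u - u_\star} < \varepsilon \,\}.
\end{equation*}
Otherwise, points of the essential images at distance at least $\varepsilon$ from $u_\star$ would accumulate at a point of $\Aessim(v; \xi_\star)$ different from $u_\star$.

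Now let $\xi \in (\xi_\star - r, \xi_\star + r)$. Every one-sided accumulation state at $\xi$ lies in the essential image of a small interval around $\xi$ contained in $(\xi_\star - r, \xi_\star + r)$, so it lies within $\varepsilon$ of $u_\star$.
\begin{itemize}
  \item Suppose $\xi$ were an \ESSIM{} discontinuity point. Lemma~\ref{lem:distinct} would give $u^- \in \Aessim^-(v; \xi)$ and $u^+ \in \Aessim^+(v; \xi)$ with $\det M(u^-, \xi, u^+) = 0$. This contradicts the choice of $\varepsilon$ and $\delta$, so $\xi \notin \mS$.
  \item Hence $\xi$ is an \ESSIM{} continuity point. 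Its value $\overline{v}(\xi)$ is within $\varepsilon$ of $u_\star$, so $\det\left[-\xi\,I + A\bigl(\overline{v}(\xi)\bigr)\right] \ne 0$, and therefore $\xi \in \mC$.
\end{itemize}
Thus $(\xi_\star - r, \xi_\star + r) \subseteq \mC$, so $\mC$ is open and $\mS \sqcup \mW$ is closed.

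The main obstacle is the compactness step that bounds the essential image of a small neighborhood near $u_\star$. This step is what makes the \ESSIM{} notion work where approximate continuity fails, and it relies on the essential images being compact and nested.
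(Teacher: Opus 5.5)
Your proof is correct and uses the same ingredients as the paper's proof: Lemma~\ref{lem:distinct} gives a singular $M(u^-,\xi,u^+)$ at any \ESSIM{} discontinuity, $\det M$ is continuous with $M(u,\xi,u) = -\xi\,I + A(u)$, and all accumulation states near an \ESSIM{} continuity point $\xi_\star$ are close to $\overline{v}(\xi_\star)$. The only difference is that you show directly that $\mC$ is open by finding a neighborhood, whereas the paper shows by sequences that $\mS \sqcup \mW$ is closed; your explicit compactness step supplies the justification for the paper's brief claim that $u_k^\pm \to \overline{v}(\xi_\star)$.
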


\begin{proof}
  We show that the set $\mS \sqcup \mW$ is closed;
  then the complementary set $\mC$ is open.
  Suppose that a sequence in $\mS \sqcup \mW$ converges to $\xi_\star
  \in \Rset$.
  In case $\xi_\star$ is a point of discontinuity for $v$,
  \ie $\xi_\star \in \mS$,
  then the limit belongs to $\mS \sqcup \mW$.
  The other case is that $\xi_\star$ is a point of continuity for $v$.
  By passing to a subsequence, we have two possibilities:
  $\xi_\star$ is the limit of either
  (i)~a sequence in $\mW$ or (ii)~a sequence in $\mS$.

  If $\{\xi_k\}_{k = 1}^\infty$ is a sequence in $\mW$ converging to
  $\xi_\star$,
  then $\det\left[-\xi\,I + A\bigl(\overline{v}(\xi_k)\bigr)\right] = 0$
  for all $k \ge 1$.
  As $\overline{v}$ is continuous at $\xi_\star$,
  $\overline{v}(\xi_k) \to \overline{v}(\xi_\star)$ as $k \to \infty$.
  Therefore,
  $\det\left[-\xi\,I + A\bigl(\overline{v}(\xi_\star)\bigr)\right] = 0$,
  \ie $\xi_\star \in \mW$.

  Suppose instead $\{\xi_k\}_{k = 1}^\infty$ is a sequence in $\mS$
  that converges to $\xi_\star$.
  By Lemma~\ref{lem:distinct}
  there exist distinct states $u_k^\pm \in \Aessim^\pm(v; \xi_k)$
  such that $\det M(u_k^-, \xi_k, u_k^+) = 0$.
  As $\xi_\star$ is a point of continuity,
  $u_k^-$ and $u_k^+$ tend to the same limit as $k \to \infty$,
  \viz $\overline{v}(\xi_\star)$,
  so $\det M(\overline{v}(\xi_\star), \xi_\star,
  \overline{v}(\xi_\star)) = 0$.
  By the definition~\eqref{eq:matrix_M}
  of $M$, $\xi_\star \in \mW$.
\end{proof}

We now characterize the nature of $v$
within each of the sets $\mC$, $\mS$, and $\mW$.

\subsection{Constant states}
\label{ref:C}
Being open by Prop.~\ref{prop:open},
$\mC$ is the disjoint union of its connected components.
Because we may label each connected component by
a single rational number chosen within it,
there are countably many connected components of $\mC$.
Let $J$ be such a connected component.

By definition of $\mC$,
all points in $J$ are \ESSIM{} continuity points of $v$.
Let $\overline{v}$ be the continuous representative of $v$ in $J$.
Also by definition of $\mC$,
$\overline{v}$ is non-resonant in $J$,
\ie for every $\xi \in J$,
$\xi$ is not an eigenvalue of $A\bigl(\overline{v}(\xi)\bigr)$.
By Prop.~\ref{prop:constancy},
$\overline{v}$ is constant throughout $J$.
Thus, every point in $\mC$ belongs to an open interval
in which the solution $\overline{v}$ is constant.
In other words,
$\overline{v}$ is \emph{locally constant} in $\mC$.

\begin{lemma}
  \label{lem:endpoints}
  Let $J := (\xi_a, \xi_b)$ be a nonempty bounded open interval.
  If $J \subseteq \mC$ or $J \subseteq \mW$,
  then the limits $v_a := \lim_{\xi\decr\xi_a} \overline{v}(\xi)$
  and $v_b := \lim_{\xi\incr\xi_b} \overline{v}(\xi)$ exist.
  Moreover,
  \begin{equation}
    \text{$\Aessim^+(v; \xi_a) = \{ v_a \}$
    and $\Aessim^-(v; \xi_b) = \{ v_b \}$.}
    \label{eq:singletons}
  \end{equation}
\end{lemma}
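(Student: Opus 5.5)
The plan is to handle the two cases $J\subseteq\mC$ and $J\subseteq\mW$ in parallel. First I establish existence of the one-sided limits of $\overline{v}$ at the endpoints. Then I upgrade these limits to statements about the one-sided \ESSIM{} accumulation sets, using only the fact that $\overline{v}=v$ a.e.\ in $J$.

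\emph{Existence of the limits.} In both cases every point of $J$ is an \ESSIM{} continuity point, so Lemma~\ref{lem:continuity_lemma_state_function} gives a continuous representative $\overline{v}:J\to\Rset^n$ with $\overline{v}=v$ a.e.\ in $J$.

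If $J\subseteq\mC$, then $\overline{v}$ is non-resonant throughout the open interval $J$. Proposition~\ref{prop:constancy} then shows that $\overline{v}$ is constant on $J$, so $v_a=v_b$ is that constant and both limits trivially exist.

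If $J\subseteq\mW$, then $\overline{v}$ is resonant on the bounded interval $J$. Theorem~\ref{th:rarefaction_intervals} shows that $\overline{v}$ has bounded variation in $J$. A function of bounded variation on a bounded open interval, valued in $\Rset^n$, has one-sided limits at both endpoints: the increments are Cauchy because the variation on $(\xi_a,\xi_a+\delta)$ tends to $0$ as $\delta\decr 0$. Hence $v_a$ and $v_b$ exist. (Alternatively, $\overline{v}=\Uinteg\comp w$ with $w$ monotone and bounded, and $\Uinteg$ continuous on the closure of the relevant range, but the BV argument is shorter.)

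\emph{Identification of the accumulation sets.} I will treat $\Aessim^+(v;\xi_a)$; the case of $\xi_b$ is symmetric. Given $\epsilon>0$, choose $r>0$ with $r<\xi_b-\xi_a$ such that $\abs{\overline{v}(\xi)-v_a}<\epsilon$ on $(\xi_a,\xi_a+r)$. Since $v=\overline{v}$ a.e.\ there, the essential image is invariant under a.e.\ modification, and it is contained in the closure of the range. Therefore
\[
\essim v\restrict_{(\xi_a,\xi_a+r)}=\essim \overline{v}\restrict_{(\xi_a,\xi_a+r)}\subseteq \overline{B_\epsilon(v_a)}.
\]
Intersecting over $r$, and then letting $\epsilon\to0$, gives $\Aessim^+(v;\xi_a)\subseteq\{v_a\}$. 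Because the one-sided accumulation set is nonempty, as noted after Def.~\ref{def:essential-image_accumulation_set}, equality holds.

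\emph{Main obstacle.} The only nontrivial step is existence of the limit in the resonant case. The key point is that continuity alone on an open interval does not give endpoint limits, so one genuinely needs the bounded variation conclusion of Theorem~\ref{th:rarefaction_intervals}, and hence the boundedness of $J$.
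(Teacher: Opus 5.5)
Your proposal is correct and follows the paper's proof essentially step for step. You take the continuous representative, obtain the endpoint limits from constancy via Prop.~\ref{prop:constancy} or from bounded variation via Theorem~\ref{th:rarefaction_intervals}, and then compare $v$ a.e.\ with $\overline{v}$ near the endpoint. The only minor difference is that you prove $\Aessim^+(v;\xi_a)\subseteq\{v_a\}$ and then invoke the general nonemptiness of one-sided accumulation sets, whereas the paper checks $v_a\in\Aessim^+(v;\xi_a)$ directly from $\overline{v}\in U$ on $(\xi_a,\xi_a+\delta)$.
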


We refer to $v_a$ and $v_b$ as then \emph{endpoint states of $v$ in $J$}.

\begin{proof}
  Since $J \subseteq \mC$ or $J \subseteq \mW$,
  every point of $J$ is a point of \ESSIM{} continuity for $v$.
  Hence, by Lemma~\ref{lem:continuity_lemma_state_function},
  there exists a continuous function $\overline{v} : J \to \Rset^n$
  such that $\overline{v} = v$ a.e.\ in $J$.
  If $J \subseteq \mC$, then $\overline{v}$ is constant on $J$
  by Prop.~\ref{prop:constancy};
  if $J \subseteq \mW$, then $\overline{v}$ is BV on $J$
  by Theorem~\ref{th:rarefaction_intervals}.
  Therefore, the indicated one-sided limits $v_a$ and $v_b$ exist.
  We prove that $\Aessim^+(v;\xi_a)=\{v_a\}$;
  the proof that $\Aessim^-(v;\xi_b)=\{v_b\}$ is analogous.

  Let $U$ be a neighborhood of $v_a$.
  By the definition of $v_a$, there exists $\delta > 0$ such that
  $\overline{v}(\xi) \in U$ for all $\xi \in (\xi_a, \xi_a + \delta)$.
  As $\overline{v} = v$ a.e.\ in $J$,
  \begin{equation}
    \abs{v^{-1}[U] \cap (\xi_a, \xi_a + r)} > 0
  \end{equation}
  for every $r \in (0,\delta)$.
  Hence, $v_a \in \Aessim^+(v;\xi_a)$.

  Now let $z \neq v_a$.
  Choose disjoint neighborhoods $V$ of $z$ and $U$ of $v_a$.
  Shrinking $\delta$ if necessary, we have
  $\overline{v}(\xi) \in U$ for all $\xi \in (\xi_a, \xi_a + \delta)$.
  Since $U \cap V = \emptyset$ and $\overline{v} = v$ a.e.\ in $J$,
  \begin{equation}
    \abs{v^{-1}[V] \cap (\xi_a, \xi_a + \delta)} = 0.
  \end{equation}
  Therefore, $z \notin \Aessim^+(v;\xi_a)$.
\end{proof}

If $v$ is a reduced state function,
then $v \in L^\infty(\Rset; \Rset^n)$
and $\essim v \subseteq \bbU_{SH}$,
so that there is a compact subset of $\bbU_{SH}$
that contains $v(\xi)$ for almost every $\xi \in \Rset$.
As a result, there exists $\Lambda > 0$ such that
\begin{equation}
  \lambda_{k + 1}(u) - \lambda_k(u) \ge \Lambda
  \label{eq:lower_bound}
\end{equation}
for all families $k = 1$, $\ldots$, $n - 1$ and all $u$ in this compact subset.

\begin{lemma}
  \label{lem:lower_bound}
  Let $J$ be a connected component of $\mC$.
  If both endpoints of $J$ belong to $\mW$,
  then $|J| \ge \Lambda$.
\end{lemma}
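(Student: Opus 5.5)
Write $J = (\xi_a, \xi_b)$; it is bounded because both endpoints are real points of $\mW$. The plan is to show that $v$ takes one constant state $u_0$ on $J$, and that this same state is resonant at both endpoints, but for two different characteristic families. The eigenvalue gap~\eqref{eq:lower_bound} then bounds $\xi_b - \xi_a$ from below.

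\emph{Step 1: $\overline{v}$ is constant on $J$.} Since $J \subseteq \mC$, Prop.~\ref{prop:constancy} shows that $\overline{v}$ equals a constant $u_0$ on $J$. Because $v = u_0$ a.e.\ in $J$, we have $u_0 \in \essim v$, which is a compact subset of $\bbU_{SH}$, so~\eqref{eq:lower_bound} applies at $u = u_0$.

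\emph{Step 2: $u_0$ is resonant at both endpoints.} Since $\xi_a \in \mW$, it is an \ESSIM{} continuity point, so $\Aessim^+(v;\xi_a) = \{\overline{v}(\xi_a)\}$. Lemma~\ref{lem:endpoints} (or directly the constancy on $J$) gives $\Aessim^+(v;\xi_a) = \{u_0\}$, so $\overline{v}(\xi_a) = u_0$. Resonance at $\xi_a$ then gives $\det[-\xi_a I + A(u_0)] = 0$, so $\xi_a = \lambda_i(u_0)$ for some family $i$. In the same way, $\xi_b = \lambda_j(u_0)$ for some family $j$.

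\emph{Step 3: the families differ, $j > i$.} For each $\xi \in J$, non-resonance gives $\xi \neq \lambda_k(u_0)$ for every $k$. Hence the open interval $(\xi_a, \xi_b)$ contains no eigenvalue of $A(u_0)$. Since $\xi_a = \lambda_i(u_0) < \xi_b = \lambda_j(u_0)$, the strict ordering of the eigenvalues gives $j > i$, and in fact $j = i+1$. Then
\[
\abs{J} = \lambda_{i+1}(u_0) - \lambda_i(u_0) \ge \Lambda.
\]

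\emph{Main obstacle.} The one delicate point is Step 2: identifying the value $\overline{v}(\xi_a)$ at the \ESSIM{} continuity point $\xi_a$ with the constant $u_0$ on the adjacent interval. This goes through the one-sided accumulation set $\Aessim^+(v;\xi_a)$, which Lemma~\ref{lem:endpoints} shows is the singleton $\{u_0\}$. The singleton property is essential here: a mere inclusion would not pin down $\overline{v}(\xi_a)$.
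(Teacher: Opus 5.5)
Your proof is correct and follows the paper's own argument: $\overline{v} \equiv u_0$ on $J$, then Lemma~\ref{lem:endpoints} identifies the endpoint values with $u_0$, so $\xi_a$ and $\xi_b$ are eigenvalues of $A(u_0)$ of different (in fact adjacent) families, and the gap bound~\eqref{eq:lower_bound} finishes; you simply spell out details the paper leaves implicit. One small correction to your closing remark: because $\xi_a$ is an \ESSIM{} continuity point, $\Aessim^+(v;\xi_a)$ is already a singleton, so the mere membership $u_0 \in \Aessim^+(v;\xi_a)$ (immediate from $v = u_0$ a.e.\ in $J$) already forces $\overline{v}(\xi_a) = u_0$, and the singleton conclusion of Lemma~\ref{lem:endpoints} is not essential.
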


\begin{proof}
  Write $J := (\xi_a, \xi_b)$
  and let $\overline{v}(\xi) \equiv u_J$ for $\xi \in J$.
  If $\xi_a$ and $\xi_b$ belong to $\mW$,
  then by Lemma~\ref{lem:endpoints},
  $\xi_a = \lambda_j(u_J)$ and $\xi_b = \lambda_k(u_J)$
  for some families $j$ and $k$.
  Necessarily, $j < k$.
  (In fact, $k = j + 1$.)
  The lower bound~\eqref{eq:lower_bound}
  implies that $|J| = \xi_b - \xi_a \ge \Lambda$.
\end{proof}

\begin{remark}
  \label{rem:lower_bound}
  If either endpoint of a connected component $J$ of $\mC$ belongs to $\mS$,
  then $|J|$ can be arbitrarily small.
  \qed
\end{remark}

\begin{proposition}
  \label{prop:local_finiteness}
  If $K := (\xi_a, \xi_b)$ is a nonempty bounded open interval
  contained in $\mC \sqcup \mW$,
  then $\mC \cap K$ and $\mW \cap K$ have finitely many connected components.
  Moreover, $\xi_a$ is the left endpoint of a nonempty bounded open
  interval $J_a$
  contained in $\mC$ or $\mW$,
  and similarly for $\xi_b$.
\end{proposition}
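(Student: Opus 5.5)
The plan is to count the components of $\mC \cap K$ with Lemma~\ref{lem:lower_bound}, and then get the components of $\mW \cap K$ for free, since $\mW \cap K = K \setminus \mC$ is the complement of finitely many open intervals.

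\emph{Step 1: components of $\mC \cap K$.} By Prop.~\ref{prop:open}, $\mC \cap K$ is open, so it is a countable disjoint union of open intervals. Let $J$ be one of these components.
\begin{itemize}
  \item If neither endpoint of $J$ equals $\xi_a$ or $\xi_b$, then both endpoints lie in $K \setminus \mC$. Since $K \subseteq \mC \sqcup \mW$, they lie in $\mW$.
  \item Being points outside $\mC$, these endpoints show that $J$ is a full connected component of $\mC$, not merely of $\mC \cap K$.
  \item Lemma~\ref{lem:lower_bound} then gives $|J| \ge \Lambda$.
\end{itemize}
The components are disjoint subintervals of $K$, so there are at most $(\xi_b - \xi_a)/\Lambda$ such ``interior'' components. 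At most two further components have $\xi_a$ or $\xi_b$ as an endpoint. Hence $\mC \cap K$ has finitely many components $J_1, \dots, J_m$.

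\emph{Step 2: components of $\mW \cap K$.} Since $\mW \cap K = K \setminus (J_1 \cup \dots \cup J_m)$, it is the complement in the interval $K$ of finitely many disjoint open intervals. Order the $J_i$ from left to right. The complement then consists of at most $m+1$ intervals: the gaps between consecutive $J_i$, together with the pieces of $K$ to the left of $J_1$ and to the right of $J_m$. Some of these may be degenerate, i.e.\ single points of $\mW$ separating two constant states. Thus $\mW \cap K$ has finitely many components. This step is where a direct attack would stall, because $\mW$ is only closed and could a priori have uncountably many components. The complement argument avoids that difficulty entirely.

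\emph{Step 3: endpoint claim.} $K$ is a finite disjoint union of the components from Steps 1 and 2. Choose $\varepsilon > 0$ smaller than the distance from $\xi_a$ to every component whose infimum exceeds $\xi_a$. Then $(\xi_a, \xi_a + \varepsilon)$ meets only components with infimum $\xi_a$. Components are disjoint intervals, so exactly one component $P$ has infimum $\xi_a$, and $(\xi_a, \xi_a + \varepsilon) \subseteq P$. Consequently $P$ is nondegenerate.
\begin{itemize}
  \item If $P$ is a component of $\mC \cap K$, take $J_a := P$.
  \item If $P$ is a component of $\mW \cap K$, take $J_a := (\xi_a, \xi_a + \varepsilon) \subseteq \mW$.
\end{itemize}
In either case $J_a$ is a nonempty bounded open interval with left endpoint $\xi_a$, contained in $\mC$ or in $\mW$. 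The argument for $\xi_b$ is symmetric.

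The only delicate point is in Step 1: Lemma~\ref{lem:lower_bound} must be applied to genuine components of $\mC$. This is exactly why the components meeting $\xi_a$ or $\xi_b$ are set aside and counted separately.
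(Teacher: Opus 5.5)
Your proof is correct and follows the paper's own argument. The paper also sets aside the at most two components of $\mC \cap K$ that abut $\xi_a$ or $\xi_b$, notes that every other component is a genuine component of $\mC$ with both endpoints in $\mW$, bounds its length below by $\Lambda$ via Lemma~\ref{lem:lower_bound}, gets $\mW \cap K = K \setminus \mC$ by complementation, and reads off $J_a$. Your Step~3 just spells out the endpoint claim the paper states in one line, and there you should also take $\varepsilon < \xi_b - \xi_a$ so that $(\xi_a, \xi_a + \varepsilon) \subseteq K$.
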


\begin{proof}
  At most one connected component of $\mC \cap K$
  has left endpoint $\xi_a$,
  and at most one has right endpoint $\xi_b$.
  Let $J$ be any other connected component $\mC \cap K$.
  Then $J$ has endpoints in $K \subseteq \mC \sqcup \mW$,
  but because a connected component is maximal,
  neither endpoint belongs to $\mC$.
  Therefore, $J$ is a connected component of $\mC$
  with both endpoints belonging to $\mW$.
  Lemma~\ref{lem:lower_bound}
  says that $|J| \ge \Lambda$.
  Since the interval $K$ has finite length,
  there can be only finitely many
  connected components of $\mC \cap K$.
  Consequently,
  its complement in $K$,
  namely $K \setminus \mC = \mW \cap K$,
  has finitely many connected components.
  As a result, there exists $\zeta_a > \xi_a$
  such that $J_a := (\xi_a, \zeta_a)$
  is contained in $\mC$ or $\mW$,
  and similarly on the right side.
\end{proof}

\subsection{Discontinuities}
\label{sec:S}
Suppose that $\xi_\star \in \mS$.
By Lemma~\ref{lem:distinct},
there exist $u^- \in \Aessim^-(v; \xi_\star)$
and $u^+ \in \Aessim^+(v; \xi_\star)$ such that $u^+ \ne u^-$,
so that $(u^-, \xi_\star, u^+)$ is an R-H jump.
Each accumulation set can have more than one element,
but by Prop.~\ref{prop:discontinuities},
$(u^-, \xi_\star, u^+)$ is an R-H jump for every
$u^- \in \Aessim^-(v; \xi_\star)$ and $u^+ \in \Aessim^+(v; \xi_\star)$.

The traditional assumption is that $v$ has a jump discontinuity at
$\xi_\star$,
\ie $v$ has left and right limits $u^-$ and $u^+$ at $\xi_\star$ with
$u^+ \ne u^-$,
or equivalently,
$\Aessim^-(v; \xi_\star)$ and $\Aessim^+(v; \xi_\star)$ are
distinct singletons.
(See Def.~\ref{def:essential-image_jump_discontinuity}.)
A sufficient condition for a discontinuity to be a jump
is that $\xi_\star$ is an isolated point of $\mS$,
as we now show.

Let $\mSprime$ denote the \emph{derived set of $\mS$},
\ie the set of limit points of $\mS$.
Note that $\mSprime$ might contain points that are not in $\mS$.
The \emph{set of isolated points of $\mS$}
is the set difference of $\mS$ and $\mSprime$:
\begin{equation}
  \iso(\mS) := \mS \setminus \mSprime.
  \label{eq:iso_S}
\end{equation}

\begin{proposition}
  \label{prop:jumps}
  A point $\xi_\star \in \iso(\mS)$ is an \ESSIM{} jump discontinuity point.
\end{proposition}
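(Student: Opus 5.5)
Since $\xi_\star$ is an isolated point of $\mS$, there is $\delta > 0$ such that $(\xi_\star - \delta, \xi_\star) \cup (\xi_\star, \xi_\star + \delta) \subseteq \mC \sqcup \mW$. The plan is to show that on each one-sided neighborhood the solution is a continuous BV function with a one-sided limit. Lemma~\ref{lem:endpoints} then identifies that limit as the unique one-sided \ESSIM{} accumulation state. Finally, distinctness of the two singletons follows because $\xi_\star \in \mS$.

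First I treat the right side; the left side is symmetric. Apply Prop.~\ref{prop:local_finiteness} to the bounded open interval $K := (\xi_\star, \xi_\star + \delta) \subseteq \mC \sqcup \mW$. Its second assertion gives a nonempty bounded open interval $J_a = (\xi_\star, \zeta_a)$ with left endpoint $\xi_\star$ that is contained entirely in $\mC$ or entirely in $\mW$. Lemma~\ref{lem:endpoints} applied to $J_a$ then shows that the limit $u^+ := \lim_{\xi \decr \xi_\star} \overline{v}(\xi)$ exists. The same lemma gives $\Aessim^+(v; \xi_\star) = \{u^+\}$. Symmetrically, using an interval $(\zeta_b, \xi_\star)$ obtained from Prop.~\ref{prop:local_finiteness} applied to $(\xi_\star - \delta, \xi_\star)$, we get $\Aessim^-(v; \xi_\star) = \{u^-\}$.

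Both one-sided accumulation sets are therefore singletons. By definition, $\xi_\star \in \mS$ means $\xi_\star$ is not an \ESSIM{} continuity point, so the two sets cannot be the same singleton. Hence $u^- \ne u^+$, and $\xi_\star$ is an \ESSIM{} jump discontinuity point in the sense of Def.~\ref{def:essential-image_jump_discontinuity}. Prop.~\ref{prop:discontinuities} additionally shows that $(u^-, \xi_\star, u^+)$ is an R-H jump.

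The only step needing care is the first: we must obtain an interval contained in a single one of $\mC$ or $\mW$ that abuts $\xi_\star$. Without this, alternating components of $\mC$ and $\mW$ could accumulate at $\xi_\star$, and the one-sided limit might fail to exist. This is exactly what the local finiteness of Prop.~\ref{prop:local_finiteness} rules out, since it rests on the length bound of Lemma~\ref{lem:lower_bound}. I would check that its hypotheses (boundedness, and $K \subseteq \mC \sqcup \mW$) hold for $K$ chosen as above; given that, the rest is direct.
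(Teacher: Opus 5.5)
Your proposal is correct and follows essentially the same route as the paper: isolate $\xi_\star$ from $\mS$, apply Prop.~\ref{prop:local_finiteness} to each one-sided punctured neighborhood to obtain an abutting interval contained in $\mC$ or $\mW$, and invoke Lemma~\ref{lem:endpoints} to conclude that each one-sided \ESSIM{} accumulation set is a singleton. You also make explicit a step the paper leaves implicit: the two singletons must differ because $\xi_\star \in \mS$ is not an \ESSIM{} continuity point.
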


\begin{proof}
  Because $\xi_\star$ is not a limit point of $\mS$,
  there exists $\delta > 0$ such that $K := (\xi_\star - \delta, \xi_\star)$
  is disjoint from $\mS$,
  so $K \subseteq \mC \sqcup \mW$.
  By Prop.~\ref{prop:local_finiteness},
  $\xi_\star$ is the right endpoint of a nonempty bounded open interval $J$
  contained in $\mC$ or $\mW$.
  By Lemma~\ref{lem:endpoints},
  $\Aessim^-(v; \xi_\star)$ is a singleton.
  Similarly, $\Aessim^+(v; \xi_\star)$ is a singleton,
  so that $v$ has a jump discontinuity at $\xi_\star$.
\end{proof}

\begin{remark}
  \label{eq:discontinuity_limit_point}
  An example of a point in $\mSprime$ is provided by
  the scalar conservation law with the $C^2$ flux function
  \begin{equation}
    f(u) := u^2 + u^5\,\sin(1/u).
  \end{equation}
  The solution of the Riemann problem with $u_L = -1$ and $u_R = 1$
  contains a countably infinite number of shock waves with limit point $u = 0$
  belonging to $\mW$.
  A variant of this example has a limit point in $\mS$.
  \qed
\end{remark}

\subsection{Continuous Waves}
\label{ref:W}
Let $J$ be a nonempty bounded open interval contained in $\mW$.
All points $\xi \in J$ are \ESSIM{} continuity points of $v$.
Let $\overline{v}$ be the continuous representative of $v$ in $J$.
By definition of $\mW$,
$\overline{v}$ is resonant in $J$,
\ie $\xi$ is an eigenvalue of $A\bigl(\overline{v}(\xi)\bigr)$
for all $\xi \in J$.
Theorem~\ref{th:rarefaction_intervals}
entails that $\overline{v}$ is a rarefaction wave within $J$.

Define the \emph{rarefaction set $\mR$ of the solution $v$} to be
\begin{equation}
  \mR := \interior \mW.
  \label{eq:R}
\end{equation}
Being open, this set is the disjoint union
of its countably many connected components,
and the solution is a rarefaction wave in each.
Certain points in the complementary subset $\mW \setminus \mR$
have clear interpretations.
Define
\begin{equation}
  \mE := (\mW \setminus \mR) \setminus \mSprime.
  \label{eq:E}
\end{equation}
We call a point $\xi_\star \in \mE$ a \emph{division point for $v$}.

\begin{proposition}
  \label{prop:E}
  Let $\xi_\star \in \mE$ be a division point.
  Then there exist nonempty bounded open intervals $J_\ell$ and $J_r$,
  each contained in $\mC$ or $\mW$,
  such that
  \begin{itemize}
    \item[(a)] $\xi_\star$ is the right endpoint of $J_\ell$ and
      the left endpoint of $J_r$ and
    \item[(b)] $v$ is continuous in $J_\ell \sqcup \{ \xi_\star \}
      \sqcup J_r$.
  \end{itemize}
  Moreover, either:
  \begin{enumerate}
    \item[(1)] $J_\ell \subseteq \mR$ and $J_r \subseteq \mC$ or
      vice versa; or
    \item[(2)] $J_\ell \subseteq \mC$ and $J_r \subseteq \mC$.
  \end{enumerate}
\end{proposition}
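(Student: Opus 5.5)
Since $\xi_\star \in \mE$, the point $\xi_\star$ is not a limit point of $\mS$. It also lies in $\mW$, so it is not in $\mS$ itself. Hence there is $\delta > 0$ with $K := (\xi_\star - \delta, \xi_\star + \delta)$ disjoint from $\mS$, so $K \subseteq \mC \sqcup \mW$. I would apply Prop.~\ref{prop:local_finiteness} to the two bounded open intervals $(\xi_\star - \delta, \xi_\star)$ and $(\xi_\star, \xi_\star + \delta)$, both contained in $\mC \sqcup \mW$. Its last assertion, applied at the right endpoint of the first interval and the left endpoint of the second, gives nonempty bounded open intervals $J_\ell$ ending at $\xi_\star$ and $J_r$ starting at $\xi_\star$, each contained in $\mC$ or in $\mW$. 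This proves (a). For (b), every point of $J_\ell \sqcup \{\xi_\star\} \sqcup J_r \subseteq K$ is an \ESSIM{} continuity point. Lemma~\ref{lem:continuity_lemma_state_function} then gives a continuous representative $\overline{v}$ on this open interval, which is what ``$v$ is continuous'' means here.

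For the dichotomy, first note that if $J_\ell \subseteq \mW$ then $J_\ell$ is open, so $J_\ell \subseteq \interior \mW = \mR$, and likewise for $J_r$. The only case to exclude is $J_\ell \subseteq \mW$ and $J_r \subseteq \mW$ simultaneously. In that case $J_\ell \sqcup \{\xi_\star\} \sqcup J_r$ is an open interval contained in $\mW$, so $\xi_\star \in \interior \mW = \mR$. This contradicts $\xi_\star \in \mW \setminus \mR$. The remaining possibilities are exactly (1), one side in $\mR$ and the other in $\mC$, and (2), both sides in $\mC$. I would add a remark that in case (2) the constants $u_{J_\ell}$ and $u_{J_r}$ agree by continuity of $\overline{v}$ at $\xi_\star$, and that $\xi_\star$ is then a characteristic speed of this common state; no additional argument is needed for the statement.

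The only subtle step is making sure Prop.~\ref{prop:local_finiteness} really supplies intervals adjacent to $\xi_\star$ from both sides. The proposition is stated for an interval $K = (\xi_a, \xi_b)$ with the endpoints outside $K$, and it gives an interval starting at $\xi_a$ and one ending at $\xi_b$. Taking the two half-intervals with $\xi_\star$ as an endpoint handles this directly. Everything else is bookkeeping with the partition~\eqref{eq:initial_partition}.
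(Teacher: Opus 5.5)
Your proposal is correct and follows essentially the same route as the paper. Both arguments take a one-sided neighborhood of $\xi_\star$ free of $\mS$ and apply Prop.~\ref{prop:local_finiteness} to get adjacent intervals in $\mC$ or $\mW$ (hence in $\mC$ or $\mR$). Both then exclude the case where both intervals lie in $\mR$, because that would put $\xi_\star$ in $\mR$. Your explicit justification of (b) via Lemma~\ref{lem:continuity_lemma_state_function} fills in a step the paper leaves implicit.
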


\begin{proof}
  As $\xi_\star$ is not a limit point of $\mS$,
  there exists $\delta > 0$ such that $K_\ell := (\xi_\star -
  \delta, \xi_\star)$
  is disjoint from $\mS$,
  so $K_\ell \subseteq \mC \sqcup \mW$.
  By Prop.~\ref{prop:local_finiteness},
  $\xi_\star$ is the right endpoint of a nonempty bounded open
  interval $J_\ell$ contained in $\mC$ or $\mR$.
  Similarly,
  $\xi_\star$ is the left endpoint of a nonempty bounded open
  interval $J_r$ contained in $\mC$ or $\mR$.
  If $J_\ell$ and $J_r$ were both contained in $\mR$,
  then $\xi_\star$ would belong to $\mR$,
  which is excluded.
  The remaining possibilities are cases~(1) and~(2).
\end{proof}

In case~(1), the division point $\xi_\star \in \mE$
is an endpoint of $J \subseteq \mC$,
where $J = J_\ell$ or $J = J_r$,
and $v$ is constant in $J \sqcup \{ \xi_\star \}$.
In case~(2),
$v$ is constant in $J_\ell \sqcup \{ \xi_\star \} \sqcup J_r$.
Therefore, $\mC \cup \mE$ consists of countably many
(not necessarily open) intervals in which $v$ is constant.
As each point of $\mE$ is an endpoint of a connected component of $\mC$,
which are countable,
$\mE$ is countable.

\begin{remark}
  To illustrate case~(2),
  suppose that the system of conservation laws has $n = 2$ components.
  Consider the constant solution $v(\xi) \equiv u_\star$,
  which solves the trivial Riemann problem with data $u_L = u_R =: u_\star$.
  Denote $\lambda_1^\star := \lambda_1(u_\star)$
  and $\lambda_2^\star := \lambda_2(u_\star)$.
  These speeds are resonant
  because $u_\star = \overline{v}(\lambda_1^\star)$
  and $u_\star = \overline{v}(\lambda_2^\star)$.
  Therefore, they belong to $\mE$,
  not to $\mC$.
  \qed
\end{remark}

\subsection{Refined partition}
\label{ref:refined}

\begin{theorem}
  \label{th:partition}
  Associated to each self-similar weak solution $v$ is the partition
  \begin{equation}
    \Rset = \iso(\mS) \sqcup \mR \sqcup (\mC \sqcup \mE) \sqcup \mSprime.
    \label{eq:refined_partition}
  \end{equation}
  of the speed axis into
  \begin{itemize}
    \item a countable set $\iso(\mS)$ of isolated \ESSIM{}
      discontinuities of $v$;
    \item a countable union of connected components of $\mR$
      in which $v$ is a rarefaction wave;
    \item a countable union of connected components of $\mC$
      in which $v$ is constant;
    \item a countable set $\mE$ of isolated endpoints of continuity
      for $v$; and
    \item the set of limit points $\mSprime$ of \ESSIM{}
      discontinuities of $v$.
  \end{itemize}
\end{theorem}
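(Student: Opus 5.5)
The theorem is essentially bookkeeping built on the partition $\Rset = \mS \sqcup \mC \sqcup \mW$ of~\eqref{eq:initial_partition}. I plan to refine each piece and then check that the refined pieces are pairwise disjoint and exhaust $\Rset$.

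\emph{Splitting $\mS$ and $\mW$.} First, $\mS = \iso(\mS) \sqcup (\mS \cap \mSprime)$ by the definition~\eqref{eq:iso_S}. Next, $\mW = \mR \sqcup (\mW \setminus \mR)$. By the definition~\eqref{eq:E}, $\mW \setminus \mR = \mE \sqcup \bigl((\mW \setminus \mR) \cap \mSprime\bigr)$.

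\emph{The points of $\mSprime$ lying in $\mC$ or $\mR$.} I must show that $\mSprime$ contains no points of $\mC$ or $\mR$. Since $\mC$ is open (Prop.~\ref{prop:open}), each point of $\mC$ has a neighborhood inside $\mC$. That neighborhood misses $\mS$, so the point is not a limit point of $\mS$. Likewise $\mR = \interior \mW$ is open and disjoint from $\mS$, so $\mR \cap \mSprime = \emptyset$.

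\emph{Assembling the partition.} From the two steps above,
\begin{equation*}
  \mSprime = (\mS \cap \mSprime) \sqcup \bigl((\mW \setminus \mR) \cap \mSprime\bigr).
\end{equation*}
Collecting the pieces gives~\eqref{eq:refined_partition}. Disjointness follows because $\mS$, $\mC$, $\mW$ are pairwise disjoint, $\mE$ excludes $\mR$ and $\mSprime$, and $\iso(\mS)$ excludes $\mSprime$.

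\emph{Properties of each piece.}
\begin{itemize}
  \item $\iso(\mS)$ is countable. Every isolated point of a subset of $\Rset$ can be assigned a rational-endpoint interval containing no other point of the set, which gives an injection into a countable set. By Prop.~\ref{prop:jumps}, each such point is an \ESSIM{} jump discontinuity.
  \item $\mR$ is open, so it has countably many connected components, each labeled by a rational. Each component is a bounded or unbounded open interval in $\mW$. On bounded subintervals, Theorem~\ref{th:rarefaction_intervals} shows $v$ is a rarefaction wave. An unbounded component cannot occur: by Prop.~\ref{prop:constancy_at_infinity}, $v$ is constant for large $|\xi|$, which contradicts resonance $\xi = \lambda(\overline{v}(\xi))$ on an unbounded set. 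Alternatively, one can exhaust the component by bounded intervals.
  \item $\mC$ has countably many components, and $v$ is constant on each by Prop.~\ref{prop:constancy} (see Sec.~\ref{ref:C}).
  \item $\mE$ is countable, and each of its points is an endpoint of an interval of constancy. Both facts come from Prop.~\ref{prop:E} and the remark following it.
\end{itemize}

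The main obstacle is minor. It is the careful verification that $\mSprime$ meets neither $\mC$ nor $\mR$, which uses the openness in Prop.~\ref{prop:open}, together with the handling of possibly unbounded components of $\mR$ when invoking Theorem~\ref{th:rarefaction_intervals}. Both steps are treated above.
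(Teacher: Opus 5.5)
Your proposal is correct and follows essentially the same approach as the paper: you split $\Rset = \mC \sqcup \mS \sqcup \mR \sqcup (\mW\setminus\mR)$, use the openness of $\mC$ and $\mR$ to see that neither meets $\mSprime$, and identify $\mS\setminus\mSprime = \iso(\mS)$ and $(\mW\setminus\mR)\setminus\mSprime = \mE$. Your extra remarks fill in details the paper leaves implicit. These are the rational-interval injection showing $\iso(\mS)$ is countable, and the use of Prop.~\ref{prop:constancy_at_infinity} to rule out unbounded components of $\mR$ so that Theorem~\ref{th:rarefaction_intervals} applies.
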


\begin{proof}
  We partition $\Rset$ into the derived set $\mSprime$ and its complement.
  By definition~\eqref{eq:R},
  \begin{equation}
    \Rset = \mC \sqcup \mS \sqcup \mW
    = \mC \sqcup \mS \sqcup \mR \sqcup (\mW \setminus \mR).
  \end{equation}
  As $\mC$ is open, $\mC \setminus \mSprime = \mC$;
  similarly, $\mR \setminus \mSprime = \mR$.
  By definition~\eqref{eq:iso_S},
  $\mS \setminus \mSprime = \iso(\mS)$,
  which is countable.
  Definition~\eqref{eq:E}
  says that $(\mW \setminus \mR) \setminus \mSprime = \mE$,
  which is also countable.
  Therefore,
  \begin{equation}
    \bigl[\mC \sqcup \mS \sqcup \mR \sqcup (\mW \setminus \mR)\bigr]
    \setminus \mSprime
    = \mC \sqcup \iso(\mS) \sqcup \mR \sqcup \mE.
  \end{equation}
  Because the complement of this set is $\mSprime$,
  we obtain the partition~\eqref{eq:refined_partition}.
\end{proof}

\section{Conclusion}
\label{section:conclusion}

As illustrated in Remark~\ref{eq:discontinuity_limit_point},
a self-similar weak solution
might contain a countably infinite number of \ESSIM{} discontinuities.
As $\mS$ is contained in a compact set $[\xi_L, \xi_R]$
by Prop.~\ref{prop:constancy_at_infinity},
$\mS$ is infinite if and only if $\mSprime$ is nonempty.
Permitting infinitely many discontinuities,
even ones required to satisfy an admissibility condition,
opens the door to anomalous phenomena.

For instance, in Ref.~\cite{PloSchMar26b}
we construct a self-similar weak solution $v$
containing an \ESSIM{} discontinuity at speed $\xi_\star$
that is not a jump discontinuity.
In this example:
\begin{itemize}
  \item $\Aessim^-(v; \xi_\star)$ has three distinct elements;
  \item $v$ also contains an infinite sequence of jump discontinuities
    with speeds $\{\xi_k\}_{k=0}^\infty$ that increase to $\xi_\star$; and
  \item each jump discontinuity is admissible according to a traveling wave criterion,
    each being an under-compressive shock wave,
    \ie its profile is an orbit connecting two saddle points.
\end{itemize}

However, in the common and practical situation where
a Riemann solution has finitely many \ESSIM{} discontinuities,
it consists of finitely many rarefaction waves, R-H jumps, and
constant states.

\begin{theorem}
  \label{th:finite}
  Let $v$ be a self-similar weak solution of system~\eqref{eq:conslaw}.
  Suppose that $v$ has finitely many discontinuities.
  Then $v$ consists of a finite number of rarefaction waves, R-H jumps,
  and constant states.
\end{theorem}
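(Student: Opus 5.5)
Since $\mS$ is finite, its derived set $\mSprime$ is empty and $\iso(\mS) = \mS$. By Theorem~\ref{th:partition}, the speed axis then splits as
\begin{equation*}
  \Rset = \mS \sqcup \mR \sqcup (\mC \sqcup \mE).
\end{equation*}
By Prop.~\ref{prop:constancy_at_infinity}, $\mS \subseteq [\xi_L, \xi_R]$, and $v$ is essentially constant on $(-\infty,\xi_L)$ and on $(\xi_R,\infty)$; we may enlarge these intervals so that they lie in $\mC$. The plan is to list the points of $\mS$ as $\xi_1 < \cdots < \xi_m$ and to handle each gap separately: $(-\infty,\xi_1)$, $(\xi_k,\xi_{k+1})$, and $(\xi_m,\infty)$. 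If $m=0$ we take the single gap $\Rset$ and use the same argument. Every gap lies in $\mC \sqcup \mW$.

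Consider a bounded gap $K = (\xi_k,\xi_{k+1})$. By Prop.~\ref{prop:local_finiteness}, $\mC \cap K$ and $\mW \cap K$ each have finitely many connected components. A connected component of $\mW \cap K$ is an interval, possibly degenerate. If it is a single point, that point belongs to $\mE$, where Prop.~\ref{prop:E} shows $v$ is constant and merges it with adjacent constant intervals. If it is nondegenerate, its interior is a bounded open interval in $\mW$, and Theorem~\ref{th:rarefaction_intervals} shows $\overline{v}$ there is a single BV rarefaction wave. Each component of $\mC \cap K$ is a constant state by Prop.~\ref{prop:constancy}. For an unbounded gap, say $(-\infty,\xi_1)$, I would split it into $(-\infty,\xi_L')$, on which $v$ is constant, and the bounded interval $[\xi_L',\xi_1)$, and then treat the bounded piece as above. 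Consequently each gap consists of finitely many rarefaction waves and constant intervals, and there are finitely many gaps.

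At each $\xi_k \in \mS = \iso(\mS)$, Prop.~\ref{prop:jumps} gives an \ESSIM{} jump discontinuity with values $v^\pm(\xi_k)$. By Prop.~\ref{prop:discontinuities}, $(v^-(\xi_k),\xi_k,v^+(\xi_k))$ is an R-H jump. Lemma~\ref{lem:endpoints} identifies $v^\pm(\xi_k)$ with the endpoint states of the adjacent constant or rarefaction intervals, so the pieces match at the discontinuities.

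The main obstacle is the bookkeeping at the boundary between adjacent components inside a gap. There I must make sure that each component of $\mW\cap K$ with nonempty interior is a full rarefaction interval. I must also show that consecutive rarefaction components of $\mW\cap K$ cannot touch without an intervening $\mC$ interval or $\mE$ point; if they did touch, the merged interval would be a single component anyway. Finally, the finiteness count must cover the endpoints of $K$, which Prop.~\ref{prop:local_finiteness} provides.

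As a byproduct, BV of $v$ on all of $\Rset$ follows. The total variation is the sum of finitely many rarefaction variations (Theorem~\ref{th:rarefaction_intervals}) and finitely many jumps, since constant pieces contribute nothing.
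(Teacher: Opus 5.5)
Your proposal is correct and follows essentially the paper's route. The paper applies Prop.~\ref{prop:local_finiteness} to each connected component of $L \setminus \mS$ with $L := (\xi_L - \epsilon, \xi_R + \epsilon)$, which is your gap-by-gap treatment with the unbounded tails cut off inside $\mC$, and then reads the structure off Theorem~\ref{th:partition} with $\mSprime = \emptyset$; your added steps (Prop.~\ref{prop:jumps} with Prop.~\ref{prop:discontinuities} for the R-H jumps, Lemma~\ref{lem:endpoints} for matching states, and the BV count) are details the paper leaves implicit. One wording slip: to place the tails in $\mC$ you should shrink $(-\infty,\xi_L)$ and $(\xi_R,\infty)$ rather than enlarge them, or simply note that the proof of Prop.~\ref{prop:constancy_at_infinity} already puts them in $\mC$.
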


\begin{proof}
  By Prop.~\ref{prop:constancy_at_infinity},
  there exist $\xi_L$, $\xi_R \in \Rset$ such that
  $(-\infty, \xi_L)$ and $(\xi_R, \infty)$ are contained in $\mC$;
  let $L := (\xi_L - \epsilon, \xi_R + \epsilon)$ for some $\epsilon > 0$.
  As $\mS$ is finite,
  $L \setminus \mS$ has finitely many connected components,
  each contained in $\mC \sqcup \mW$.
  If $K$ is one of them,
  then according to Prop.~\ref{prop:local_finiteness},
  $\mC \cap K$ and $\mW \cap K$, have finitely many components,
  which implies that $\mC$ and $\mW$ do too.
  In particular, $\mE$, as characterized
  in Prop.~\ref{prop:local_finiteness}, is finite.
  Therefore,
  $\Rset = \iso(\mS) \sqcup \mR \sqcup (\mC \sqcup \mE)$,
  where $\iso(\mS)$ is the finite set of jump discontinuities of $v$,
  $\mR$ is the union of finitely many bounded open intervals
  in which $v$ is a rarefaction wave,
  and $\mC \sqcup \mE$ is the union of finitely many intervals
  (open, half-open, or closed)
  in which $v$ is constant.
\end{proof}

\section*{Declaration of generative AI use}

During the preparation of this manuscript,
the authors used ChatGPT,
a generative AI tool developed by OpenAI,
for assistance with language editing
and critical review of the mathematical exposition.
The authors reviewed and revised all AI-assisted output
and take full responsibility for the content of this manuscript.

\appendix

\section{Essential image}
\label{sec:essential_image}

In this appendix,
we recall the definition of the essential image of a measurable map
and use it to develop a notion of accumulation set that respects equality a.e.

Set $Y := \Rset^p$.
For a Lebesgue measurable subset $\Yopen \subseteq Y$,
let $\abs{\Yopen}$ denote its measure.
For $y \in Y$ and $r > 0$,
let $B_r(y) \subseteq Y$ denote the open ball with radius $r$ centered at $y$.

Set $Z := \Rset^n$.
Let $\mB$ be the set
of open balls with rational radii and centers with rational coordinates,
which is a countable base for the topology on $Z$.

Let $h : Y \to Z$ be a measurable map:
if $\Zopen \subseteq Z$ is open,
$h^{-1}[\Zopen]$ is measurable.
A measure-theoretic analog of the image $\im h$ of $h$
is defined as follows.

\begin{definition}
  \label{def:essential_image}
  The \emph{essential image} (or \emph{essential range}) \emph{of $h$},
  which we denote $\essim h$,
  is the set of $z \in Z$ such that
  the preimage under $h$ of each neighborhood of $z$ has positive measure.
\end{definition}
\par\noindent
Thus,
$z \in \essim h$ if and only if $\abs{h^{-1}[\Zopen]} > 0$
for any neighborhood $\Zopen$ of $z$.
Equivalently,
$z \notin \essim h$ if and only if
there exists a neighborhood $\Zopen$ of $z$ such that
$\abs{h^{-1}[\Zopen]} = 0$.
Notice that if $h = k$ a.e., $\essim h = \essim k$,
\ie essential images are invariant under equality a.e.

The image of a subset $\Yopen \subseteq Y$ under $h$,
which is the image of the restriction $h\restrict_{\Yopen}$ of $h$ to $\Yopen$,
likewise has a measure-theoretic analog.

\begin{definition}
  \label{def:essential_image_of_subset}
  Let $\Yopen \subseteq Y$ be measurable.
  The \emph{essential image of $\Yopen$ under $h$}
  is the essential image of $h\restrict_{\Yopen}$.
\end{definition}
\par\noindent
In other words,
$z \in \essim h\restrict_{\Yopen}$ if and only if
$\abs{h^{-1}[\Zopen] \cap \Yopen} > 0$
for any neighborhood $\Zopen$ of $z$.

\begin{lemma}
  \label{lem:union}
  Suppose that $\Yopen$ and $\Yopen'$ are measurable.
  Then
  \begin{equation}
    \essim h\restrict_{\Yopen\,\cup\,\Yopen'}
    = \left(\essim h\restrict_{\Yopen}\right)
    \cup \left(\essim h\restrict_{\Yopen'}\right).
  \end{equation}
\end{lemma}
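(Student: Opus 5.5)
The plan is to prove the two inclusions separately, working directly from Def.~\ref{def:essential_image_of_subset}, so that everything reduces to elementary facts about Lebesgue measure. Throughout I use the identity
\begin{equation*}
  h^{-1}[\Zopen] \cap (\Yopen \cup \Yopen')
  = \bigl(h^{-1}[\Zopen] \cap \Yopen\bigr) \cup \bigl(h^{-1}[\Zopen] \cap \Yopen'\bigr),
\end{equation*}
valid for any open $\Zopen \subseteq Z$. All three sets appearing here are measurable, because $h$ is measurable and $\Yopen$, $\Yopen'$ are measurable.

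For the inclusion ``$\supseteq$'', suppose $z \in \essim h\restrict_{\Yopen}$; the case of $\Yopen'$ is symmetric. Let $\Zopen$ be any neighborhood of $z$. Then $\abs{h^{-1}[\Zopen] \cap \Yopen} > 0$. By monotonicity of measure this gives $\abs{h^{-1}[\Zopen] \cap (\Yopen \cup \Yopen')} > 0$, so $z \in \essim h\restrict_{\Yopen\,\cup\,\Yopen'}$.

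For the inclusion ``$\subseteq$'', I argue by contraposition. Suppose $z$ lies in neither $\essim h\restrict_{\Yopen}$ nor $\essim h\restrict_{\Yopen'}$. Then there are neighborhoods $\Zopen_1$ and $\Zopen_2$ of $z$ such that
\begin{equation*}
  \abs{h^{-1}[\Zopen_1] \cap \Yopen} = 0
  \quad\text{and}\quad
  \abs{h^{-1}[\Zopen_2] \cap \Yopen'} = 0 .
\end{equation*}
Set $\Zopen := \Zopen_1 \cap \Zopen_2$, which is again a neighborhood of $z$. Since $h^{-1}[\Zopen] \subseteq h^{-1}[\Zopen_i]$ for $i = 1, 2$, both pieces in the identity above have measure zero. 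By subadditivity, $\abs{h^{-1}[\Zopen] \cap (\Yopen \cup \Yopen')} = 0$, so $z \notin \essim h\restrict_{\Yopen\,\cup\,\Yopen'}$.

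The only step needing care is this second inclusion. The two witness neighborhoods may differ, and one has to pass to their common refinement $\Zopen_1 \cap \Zopen_2$ before applying subadditivity. Everything else is routine.
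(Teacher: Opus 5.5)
Your proof is correct and follows essentially the paper's route: both decompose the preimage of a neighborhood, intersected with the union, into the two pieces and then apply monotonicity and subadditivity of Lebesgue measure. The only difference is that in the inclusion $\subseteq$ you explicitly pass to the intersection of the two witness neighborhoods, which makes precise a quantifier exchange that the paper's per-neighborhood ``if and only if'' leaves implicit.
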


\begin{proof}
  Let $A := h^{-1}[\Zopen] \cap \Yopen$,
  $A' := h^{-1}[\Zopen] \cap \Yopen'$,
  and $B := h^{-1}[\Zopen] \cap \left(\Yopen \cup \Yopen'\right)$.
  As $B = A \cup A'$,
  \begin{equation}
    \max\{\abs{A}, \abs{A'}\} \le \abs{B} \le \abs{A} + \abs{A'},
  \end{equation}
  so that $\abs{B} > 0$ if and only if $\abs{A} > 0$ or $\abs{A'} > 0$.
  Hence, $z \in \essim h\restrict_{\Yopen \cup \Yopen'}$
  if and only if $z \in \essim h\restrict_{\Yopen}$
  or $z \in \essim h\restrict_{\Yopen'}$.
\end{proof}

\begin{lemma}
  \label{lem:monotonicity}
  Suppose that $\Yopen$, $\Yopen'$, $\Nullset \subseteq Y$ are measurable,
  $\Nullset$ has measure zero,
  and $\Yopen \subseteq \Yopen' \cup \Nullset$.
  Then $\essim h\restrict_{\Yopen} \subseteq \essim h\restrict_{\Yopen'}$.
\end{lemma}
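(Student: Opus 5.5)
The statement to prove is Lemma~\ref{lem:monotonicity}: if $\Yopen \subseteq \Yopen' \cup \Nullset$ with $\abs{\Nullset} = 0$, then $\essim h\restrict_{\Yopen} \subseteq \essim h\restrict_{\Yopen'}$. The plan is to argue pointwise from Def.~\ref{def:essential_image_of_subset}, comparing the measures of the relevant preimages via monotonicity and subadditivity of Lebesgue measure.

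First, fix $z \in \essim h\restrict_{\Yopen}$ and an arbitrary neighborhood $\Zopen$ of $z$. By definition, $\abs{h^{-1}[\Zopen] \cap \Yopen} > 0$. Next, intersect the hypothesis $\Yopen \subseteq \Yopen' \cup \Nullset$ with $h^{-1}[\Zopen]$ to obtain
\begin{equation}
  h^{-1}[\Zopen] \cap \Yopen \subseteq \bigl(h^{-1}[\Zopen] \cap \Yopen'\bigr) \cup \Nullset .
\end{equation}
All sets involved are measurable, since $h$ is measurable and $\Zopen$ may be taken open (or replaced by an open neighborhood inside it, which only shrinks the preimage).

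Monotonicity and subadditivity of measure then give
\begin{equation}
  0 < \abs{h^{-1}[\Zopen] \cap \Yopen} \le \abs{h^{-1}[\Zopen] \cap \Yopen'} + \abs{\Nullset} = \abs{h^{-1}[\Zopen] \cap \Yopen'} .
\end{equation}
Since $\Zopen$ was an arbitrary neighborhood of $z$, this shows $z \in \essim h\restrict_{\Yopen'}$, which is the claimed inclusion.

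There is no real obstacle here; the only point requiring care is measurability when neighborhoods are not open. I would handle that by observing that the defining condition only needs to be checked on open neighborhoods, because every neighborhood contains an open one and the measure of the preimage is monotone in $\Zopen$.
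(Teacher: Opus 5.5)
Your proof is correct and follows essentially the same route as the paper: fix $z$ and a neighborhood $\Zopen$, use $\Yopen \subseteq \Yopen' \cup \Nullset$ to get $0 < \abs{h^{-1}[\Zopen]\cap\Yopen} \le \abs{h^{-1}[\Zopen]\cap\Yopen'} + \abs{\Nullset}$, and use $\abs{\Nullset}=0$ to conclude. Your remark about reducing to open neighborhoods is extra care on a point the paper leaves implicit, and it does no harm.
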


\begin{proof}
  Let $z \in \essim h\restrict_{\Yopen}$.
  For any neighborhood $\Zopen$ of $z$,
  $\abs{h^{-1}[\Zopen] \cap \Yopen} > 0$.
  The assumption $\Yopen \subseteq \Yopen' \cup \Nullset$
  implies $\abs{h^{-1}[\Zopen] \cap (\Yopen' \cup \Nullset)} > 0$.
  As $\Nullset$ has measure zero,
  $\abs{h^{-1}[\Zopen] \cap \Yopen'} > 0$.
  Thus, $z \in \essim h\restrict_{\Yopen'}$.
\end{proof}

\begin{lemma}
  \label{lem:essential_image}
  Suppose that $\Yopen \subseteq Y$ has positive measure
  and $h\restrict_{\Yopen}$ is essentially bounded.
  Then $\essim h\restrict_{\Yopen}$ is nonempty and compact,
  and $h(y) \in \essim h\restrict_{\Yopen}$ for almost every $y \in \Yopen$.
\end{lemma}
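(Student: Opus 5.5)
We prove the three claims in order: first the a.e.\ statement, which is really a consequence of second countability, then nonemptiness, then compactness. Throughout, write $E := \essim h\restrict_{\Yopen}$, and recall the countable base $\mB$ of rational balls in $Z$.

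\emph{Step 1: $h(y) \in E$ for a.e.\ $y \in \Yopen$.}
Suppose $h(y) \notin E$ for some $y \in \Yopen$. Then $z = h(y)$ has a neighborhood $\Zopen$ with $\abs{h^{-1}[\Zopen] \cap \Yopen} = 0$. Shrinking $\Zopen$, we may take it to be a ball $\Bopen \in \mB$ with $z \in \Bopen$ and $\abs{h^{-1}[\Bopen] \cap \Yopen} = 0$. Let $\mB_0 \subseteq \mB$ be the collection of such null balls. Then
\begin{equation}
  \{\, y \in \Yopen \,:\, h(y) \notin E \,\}
  \subseteq \bigcup_{\Bopen \in \mB_0} h^{-1}[\Bopen] \cap \Yopen.
\end{equation}
This is a countable union of null sets, hence null. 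The key point is the reduction from arbitrary neighborhoods to the countable family $\mB$; it is the only place where an idea is needed, and second countability of $Z$ settles it.

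\emph{Step 2: $E$ is nonempty.}
Since $\abs{\Yopen} > 0$ and the exceptional set in Step 1 is null, some $y \in \Yopen$ has $h(y) \in E$.

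\emph{Step 3: $E$ is compact.}
First, $E$ is closed: its complement is open. Indeed, if $z \notin E$ with witnessing open neighborhood $\Zopen$, then every $z' \in \Zopen$ also has $\Zopen$ as a neighborhood with null preimage in $\Yopen$. So $\Zopen \cap E = \emptyset$.

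Second, $E$ is bounded. Essential boundedness gives $M$ with $\abs{h(y)} \le M$ for a.e.\ $y \in \Yopen$. If $\abs{z} > M$, take the open neighborhood $\{\abs{\cdot} > M\}$ of $z$. Its preimage intersected with $\Yopen$ is null, so $z \notin E$. Hence $E$ lies in the closed ball of radius $M$.

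Closed and bounded in $\Rset^n$, $E$ is compact by Heine--Borel.
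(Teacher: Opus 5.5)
Your proof is correct and follows essentially the same route as the paper. Both arguments reduce to the countable family of rational balls whose preimages in the domain are null to get the a.e.\ statement, and both get nonemptiness from the positive measure of the domain and boundedness from the complement of a closed ball of radius exceeding the essential supremum. The only difference is cosmetic: the paper gets closedness from the identity expressing the complement of the essential image as the union of those null rational balls, while you check directly that the complement is open.
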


\begin{proof}
  Denote $W := \essim h\restrict_{\Yopen}$.

  First, we show that $W$ is bounded.
  Let $K > \norm{h\restrict_{\Yopen}}_\infty$.
  If $\overline{B} \subseteq Z$
  denotes the closed ball centered at $0$ with radius $K$,
  then $\overline{B}^{\,c}$ is open and
  \begin{equation}
    h^{-1}\left[\,\overline{B}^{\,c}\,\right] \cap \Yopen
    = \{\, y \in \Yopen \,:\, \norm{h(y)} > K \,\}
  \end{equation}
  has measure zero,
  so that any $z \in \overline{B}^{\,c}$ belongs to $W^c$.
  Thus, $W \subseteq \overline{B}$.

  Let $\mB$ be the countable base for the topology on $Z$,
  and define $\mB_h$ as the set of $B \in \mB$
  such that $\abs{h^{-1}[B] \cap \Yopen} = 0$.
  We show that the complement of $W$ is
  \begin{equation}
    W^c = \bigcup\,\{\, B \,:\, B \in \mB_h \,\}
    \label{eq:union}
  \end{equation}
  as follows:
  \begin{itemize}
    \item If $z \in W^c$,
      then $z$ has a neighborhood $\Zopen$
      such that $\abs{h^{-1}[\Zopen] \cap \Yopen} = 0$.
      Choose $B \in \mB$ such that $z \in B \subseteq \Zopen$.
      As $\abs{h^{-1}[B] \cap \Yopen} \le \abs{h^{-1}[\Zopen] \cap \Yopen} = 0$,
      $B \in \mB_h$.
      Thus, $z \in B$ for some $B \in \mB_h$.
    \item If $z \in B$ for some $B \in \mB_h$,
      then $\abs{h^{-1}[B] \cap \Yopen} = 0$,
      so that $z \in W^c$.
  \end{itemize}

  One consequence is that $W^c$ is the union of open sets, \ie $W$ is closed.
  Being bounded, $W$ is compact.
  In addition,
  Eq.~\eqref{eq:union}
  implies that
  \begin{equation}
    h^{-1}[W^c] \cap \Yopen = \bigcup\,\{\, h^{-1}[B] \cap \Yopen
    \,:\, B \in \mB_h \,\}
  \end{equation}
  is a countable union of sets with measure zero; hence it has measure zero.
  Thus, the set of $y \in \Yopen$ such that $h(y) \notin W$ has measure zero.
  Finally, $W$ is nonempty, for otherwise
  $\Yopen = \left(h^{-1}[W] \cup h^{-1}[W^c]\right) \cap \Yopen$
  would have measure zero,
  contrary to hypothesis.
\end{proof}

The accumulation set (or cluster set) of $h$ at $y \in \Yopen$
is the set of limits of $h(y_k)$ as $k \to \infty$
for sequences $\{y_k\}_{k = 1}^\infty$ in $\Yopen$ that converge to $y$.
We now define an analog based on essential images.

\begin{definition}
  \label{def:essential_accumulation_sets}
  Let $y \in Y$.
  The \emph{\ESSIM{} accumulation set of $h$ at $y$} is
  \begin{equation}
    \Aessim(h; y) := \bigcap_{r > 0} \essim h\restrict_{B_r(y)}.
  \end{equation}
  Elements of this set are the \emph{\ESSIM{} accumulation values of
  $h$ at $y$}.
\end{definition}

\begin{remark}
  \label{rem:naming}
  To avoid confusion with
  a similar, but distinct, concept named an \emph{essential cluster set}
  that appears in the mathematical literature,
  we do not use the simpler term \emph{essential accumulation set}.
\end{remark}

\begin{remark}
  \label{rem:scalar}
  For the scalar case $n = 1$,
  the closely related notions of
  \emph{essential continuity} and \emph{essential limit sets}
  have received a comprehensive development in Ref.~\cite{FelWag98}.
  Even though the definition of essential limit set $E_v(\xi)$
  is based on the total order in $\Rset$,
  whereas the definition of \ESSIM{} accumulation set $\Aessim(v; \xi)$
  is based on essential images,
  these sets coincide.
  In particular, the associated notion of continuity is the same.
  \qed
\end{remark}

\begin{lemma}
  \label{lem:inclusion}
  Suppose that $\Yopen \subseteq Y$ is open
  and $h\restrict_{\Yopen}$ is essentially bounded.
  For any $y \in \Yopen$, $\Aessim(h; y)$ is nonempty and compact.
  Moreover, $h(y) \in \Aessim(h; y)$ for almost every $y \in \Yopen$.
\end{lemma}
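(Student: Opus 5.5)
Three claims need proof: the compactness of $\Aessim(h; y)$, its nonemptiness, and the almost-everywhere membership $h(y) \in \Aessim(h; y)$. Fix $y \in \Yopen$ and choose $r_0 > 0$ with $B_{r_0}(y) \subseteq \Yopen$; this is possible because $\Yopen$ is open. For $0 < r \le r_0$ the ball $B_r(y)$ has positive measure and $h$ is essentially bounded on it. Lemma~\ref{lem:essential_image} therefore makes each $W_r := \essim h\restrict_{B_r(y)}$ nonempty and compact. By Lemma~\ref{lem:monotonicity}, $r \le r'$ implies $W_r \subseteq W_{r'}$, so the family $\{W_r\}_{0<r\le r_0}$ is nested. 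The intersection over all $r > 0$ equals the intersection over $r \le r_0$, since larger $r$ only give larger sets. This already gives compactness: $\Aessim(h;y)$ is an intersection of compact sets, hence closed and contained in the compact set $W_{r_0}$.

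For nonemptiness I will use the finite intersection property, i.e.\ Cantor's intersection theorem. Any finite subfamily $W_{r_1},\dots,W_{r_m}$ of the nested family contains $W_{\min r_i} \ne \emptyset$, so the intersection of the whole family of compact sets is nonempty. Equivalently, the intersection equals $\bigcap_k W_{r_0/k}$, a decreasing sequence of nonempty compacta.

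The a.e.\ statement is the main obstacle. Here $y$ varies, and I must avoid an uncountable union of null sets. I will localize with countable data. Let $E$ be the set of $y \in \Yopen$ with $h(y) \notin \Aessim(h;y)$. If $y \in E$, there are $r > 0$ and an open set $\Zopen \ni h(y)$ such that $\abs{h^{-1}[\Zopen] \cap B_r(y)} = 0$. I can shrink $\Zopen$ to a ball $B \in \mB$ containing $h(y)$. I can also pick a ball $Q$ with rational center and rational radius such that $y \in Q \subseteq B_r(y)$. Then $\abs{h^{-1}[B] \cap Q} = 0$, and $y \in h^{-1}[B] \cap Q$. Consequently
\begin{equation}
  E \subseteq \bigcup \bigl\{\, h^{-1}[B] \cap Q \,:\, B \in \mB,\ Q \text{ a rational ball},\ \abs{h^{-1}[B] \cap Q} = 0 \,\bigr\}.
\end{equation}
This is a countable union of null sets, so $E$ has measure zero. (I will also note that $E$ itself need not be shown measurable, since it is contained in a null set and Lebesgue measure is complete.) This concludes the proof.
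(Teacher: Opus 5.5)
Your proof is correct, and for the a.e.\ claim it takes a genuinely different route from the paper's.

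\textbf{Compactness and nonemptiness.} This part matches the paper. Both arguments reduce to the nested family $\essim h\restrict_{B_r(y)}$, $0<r\le r_0$, of nonempty compact sets supplied by Lemma~\ref{lem:essential_image} and Lemma~\ref{lem:monotonicity}. The paper argues via the open cover $E_R\setminus E_r$ of $E_R$, while you invoke the finite intersection property; these are the same compactness fact.

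\textbf{The a.e.\ membership.} The paper applies the Lebesgue density theorem to each $h^{-1}[B]$ with $B\in\mB$ and discards the countably many exceptional null sets $\Nullset_B$. At every remaining $y$, the set $h^{-1}[B]\cap B_\delta(y)$ occupies at least half of $B_\delta(y)$, so it has positive measure. You instead cover the bad set by the countably many null sets $h^{-1}[B]\cap Q$, with $B\in\mB$ and $Q$ a rational ball in $Y$. This is the countable-base device the paper itself uses in Lemma~\ref{lem:essential_image}, now applied to bases of both the domain and the target.

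\textbf{What each route buys.}
\begin{itemize}
  \item Your route is more elementary and more general. It needs only second countability of $Y$ and $Z$ and countable subadditivity of the measure, with no differentiation theory. It also never uses $y\in\Yopen$ or essential boundedness for the a.e.\ statement.
  \item The paper's route proves more than the lemma requires. Off a null set, the preimage of every neighborhood of $h(y)$ has density one at $y$, i.e., $h$ is approximately continuous a.e. The lemma only needs positive measure of these preimages in every ball around $y$.
\end{itemize}
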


\begin{proof}
  Fix $y \in \Yopen$.
  Because $\Yopen$ is open,
  there exists $R > 0$ such that $B_R(y) \subseteq \Yopen$.
  By Lemma~\ref{lem:monotonicity},
  the sets $E_r := \essim h\restrict_{B_r(y)}$ for $r \in (0, R)$
  form a nested family of nonempty, compact subsets of $Y$
  that shrink as $r \decr 0$.
  The intersection $\Aessim(h; y)$ of these sets is compact
  because it is closed and contained in $E_R$.
  It is also nonempty, for otherwise the open sets $E_R \setminus E_r$
  would cover $E_R$; by compactness, finitely many would suffice,
  so that $E_r$ would be empty for some $r > 0$, a contradiction.

  Let $\mB$ be the countable base for the topology on $Z$.
  By the Lebesgue density theorem~\cite[Theorem~1.35]{EvaGar15},
  for any $B \in \mB$ there exists a set $\Nullset_B$ of measure zero such that
  every $y \in h^{-1}[B]\,\setminus\,\Nullset_B$ is a point of density~1,
  meaning that
  \begin{equation}
    \lim_{r \decr 0} \frac{\abs{h^{-1}[B] \cap B_r(y)}}{\abs{B_r(y)}} = 1.
    \label{eq:Lebesgue_density}
  \end{equation}
  Form the union $\Nullset$ of $\Nullset_B$ for $B \in \mB$;
  then $\Nullset$ has measure zero because $\mB$ is countable.

  Now fix $y \in \Yopen\,\setminus\,\Nullset$.
  We show that $h(y) \in \Aessim(h; y)$.
  Again let $R > 0$ be such that $B_R(y) \subseteq \Yopen$
  and let $r \in (0, R)$.
  For any neighborhood $\Zopen$ of $h(y)$,
  choose $B \in \mB$ such that $h(y) \in B \subseteq \Zopen$.
  Then $y\in h^{-1}[B]\,\setminus\,\Nullset_B$
  and Eq.~\eqref{eq:Lebesgue_density}
  ensures that $\abs{h^{-1}[B] \cap B_\delta(y)}/\abs{B_\delta(y)} \ge \half$
  for some $\delta \in (0, r)$.
  Therefore,
  \begin{equation}
    \abs{h^{-1}[\Zopen] \cap B_r(y)} \ge \abs{h^{-1}[B] \cap B_\delta(y)}
    \ge \half\,\abs{B_\delta(y)} > 0.
  \end{equation}
  Thus, $h(y) \in \essim h\restrict_{B_r(y)}$.
  As $r \in (0, R)$ is arbitrary,
  $h(y) \in \Aessim(h; y)$.
\end{proof}

Just as $y$ is a continuity point for $h$
if and only if its accumulation set at $y$ is a singleton,
we make the following definition.

\begin{definition}
  \label{def:essential-image_continuity}
  We say that $y \in Y$ is an \emph{\ESSIM{} continuity point} for $h$
  provided $\Aessim(h; y)$ contains a single element,
  which we denote by $\overline{h}(y)$:
  \begin{equation}
    \Aessim(h; y) = \bigl\{\,\overline{h}(y)\bigr\}.
    \label{eq:essential-image_continuity_point}
  \end{equation}
  Otherwise, we say $y$ is an \emph{\ESSIM{} discontinuity point} for $h$.
\end{definition}

\begin{remark}
  Ess-im continuity is defined in terms of accumulation sets;
  it is not induced by a topology on $Y$.
\end{remark}

\begin{lemma}
  \label{lem:continuity_lemma}
  Suppose that $\Yopen \subseteq Y$ is open
  and $h\restrict_{\Yopen}$ is essentially bounded.
  Assume that every point in $\Yopen$ is an \ESSIM{} continuity point for $h$.
  Then $\overline{h}$ is continuous in $\Yopen$
  and $h = \overline{h}$ a.e.\ in $\Yopen$.
\end{lemma}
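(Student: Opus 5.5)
We prove the two assertions in turn: first that $\overline{h}$ is continuous at each point of $\Yopen$, then that $h = \overline{h}$ a.e.\ in $\Yopen$.

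\textbf{Continuity.} Fix $y_0 \in \Yopen$ and a neighborhood $\Zopen$ of $\overline{h}(y_0)$; we may take $\Zopen$ to be an open ball. Choose a smaller closed ball $\overline{B}$ centered at $\overline{h}(y_0)$ with $\overline{B} \subseteq \Zopen$, and put $V := \interior \overline{B}$.

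We first claim that there is $r > 0$ with $B_r(y_0) \subseteq \Yopen$ and $\essim h\restrict_{B_r(y_0)} \subseteq V$. Pick $R$ with $B_R(y_0) \subseteq \Yopen$. As in the proof of Lemma~\ref{lem:inclusion}, the sets $E_r := \essim h\restrict_{B_r(y_0)}$ for $r \in (0, R)$ are nonempty and compact, and they decrease as $r \decr 0$ (Lemma~\ref{lem:monotonicity}). Their intersection is $\{\overline{h}(y_0)\} \subseteq V$. Hence the compact sets $E_r \setminus V$ are nested with empty intersection. By the finite intersection property, some $E_r \setminus V$ is empty, which proves the claim.

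Now let $y \in B_{r/2}(y_0)$. Then $B_{r/2}(y) \subseteq B_r(y_0)$. By Lemma~\ref{lem:monotonicity},
\[
  \Aessim(h; y) \subseteq \essim h\restrict_{B_{r/2}(y)} \subseteq \essim h\restrict_{B_r(y_0)} \subseteq V.
\]
Because $y$ is an \ESSIM{} continuity point, this gives $\overline{h}(y) \in V \subseteq \Zopen$. Therefore $\overline{h}$ is continuous at $y_0$.

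\textbf{Equality a.e.} Lemma~\ref{lem:inclusion} gives $h(y) \in \Aessim(h; y)$ for almost every $y \in \Yopen$. Since $\Aessim(h; y) = \{\overline{h}(y)\}$ at every $y \in \Yopen$, it follows that $h(y) = \overline{h}(y)$ a.e.\ in $\Yopen$. Measurability of $\overline{h}$ is automatic, since $\overline{h}$ is continuous.

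The main obstacle is the step passing from a pointwise singleton accumulation set to a uniform neighborhood statement. This is handled by the compactness argument in the claim above, which is already implicit in Lemma~\ref{lem:inclusion}.
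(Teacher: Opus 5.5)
Your proposal is correct and follows the paper's proof essentially step by step. You use compactness of the nested sets $\essim h\restrict_{B_r(y_0)}$ to find a radius where the essential image lies in the target neighborhood, then the half-radius ball with Lemma~\ref{lem:monotonicity} for continuity, then Lemma~\ref{lem:inclusion} for the a.e.\ equality. The only difference is that you spell out the finite-intersection argument the paper leaves implicit; your auxiliary closed ball is unnecessary, since any open $V$ already makes $E_r \setminus V$ compact.
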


\begin{proof}
  Let $y_\star \in \Yopen$
  and let $V \subseteq Z$ be a neighborhood of $z_\star :=
  \overline{h}(y_\star)$.
  As
  \begin{equation}
    \{ z_\star \} = \Aessim(h; y_\star) = \bigcap_{r > 0} \essim
    h\restrict_{B_r(y_\star)},
  \end{equation}
  and the sets $\essim h\restrict_{B_r(y_\star)}$
  form a nested family of nonempty compact sets,
  there exists $\delta > 0$ such that $\essim
  h\restrict_{B_\delta(y_\star)} \subseteq V$.
  Let $B := B_{\delta/2}(y_\star)$; we demonstrate that
  $\im \overline{h}\restrict_B \subseteq V$.
  If $y \in B$, then $B_{\delta/2}(y) \subseteq B_{\delta}(y_\star)$.
  By Lemma~\ref{lem:monotonicity}
  (with $\Nullset = \emptyset$),
  \begin{equation}
    \bigl\{\,\overline{h}(y)\bigr\} \subseteq \essim
    h\restrict_{B_{\delta/2}(y)}
    \subseteq \essim h\restrict_{B_{\delta}(y_\star)} \subseteq V.
  \end{equation}
  Thus, $\overline{h}$ is continuous at $y_\star$.
  By Lemma~\ref{lem:inclusion},
  $h(y) \in \Aessim(h; y) = \bigl\{\,\overline{h}(y)\bigr\}$
  for almost every $y \in \Yopen$.
  Therefore, $h=\overline{h}$ a.e.\ in $\Yopen$.
\end{proof}

\begin{lemma}
  \label{lem:ae_to_Aessim}
  Suppose that $\Yopen \subseteq Y$ is open
  and $h\restrict_{\Yopen}$ is essentially bounded.
  Define $X := \essim h\restrict_{\Yopen}$.
  Suppose that $\Psi : \Yopen \times X \to \Rset^q$ is continuous and
  \begin{equation}
    \Psi\bigl(y, h(y)\bigr) = 0\ \text{for a.e. $y \in \Yopen$}.
    \label{eq:ae_identity}
  \end{equation}
  If $y_\star \in \Yopen$ and $u \in \Aessim(h; y_\star)$,
  then
  \begin{equation}
    \Psi(y_\star, u) = 0.
  \end{equation}
\end{lemma}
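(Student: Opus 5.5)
I will argue by contradiction, using continuity of $\Psi$ to convert the a.e.\ identity into a statement about essential images. The first step is to note that $u\in X$. By Lemma~\ref{lem:monotonicity}, for $r$ small enough that $B_r(y_\star)\subseteq\Yopen$, we have $\essim h\restrict_{B_r(y_\star)}\subseteq\essim h\restrict_{\Yopen}=X$. Hence $\Aessim(h;y_\star)\subseteq X$, and $\Psi(y_\star,u)$ is defined.

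Next, suppose $\Psi(y_\star,u)\neq 0$. Since $\Psi$ is continuous on $\Yopen\times X$, there are $\delta>0$ and an open neighborhood $\Zopen$ of $u$ in $Z$ such that $B_\delta(y_\star)\subseteq\Yopen$ and $\Psi(y,z)\neq0$ for all $y\in B_\delta(y_\star)$ and $z\in\Zopen\cap X$. Here the topology on $X$ is the subspace topology, so the neighborhood has the form $\Zopen\cap X$.

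Now I use the membership $u\in\Aessim(h;y_\star)\subseteq\essim h\restrict_{B_\delta(y_\star)}$. It gives $\abs{h^{-1}[\Zopen]\cap B_\delta(y_\star)}>0$. By Lemma~\ref{lem:essential_image}, $h(y)\in X$ for a.e.\ $y\in\Yopen$. By hypothesis~\eqref{eq:ae_identity}, $\Psi(y,h(y))=0$ for a.e.\ $y\in\Yopen$. Discarding these two null sets leaves a set of positive measure of points $y\in B_\delta(y_\star)$ with $h(y)\in\Zopen\cap X$ and $\Psi(y,h(y))=0$. This contradicts the choice of $\delta$ and $\Zopen$, so $\Psi(y_\star,u)=0$.

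The only delicate point is the domain bookkeeping: $\Psi$ is defined only on $\Yopen\times X$. This requires both that $u\in X$ and that $h(y)\in X$ a.e., and both are supplied by the earlier appendix lemmas. Lemma~\ref{lem:essential_image} applies because $\Yopen$ is open and nonempty (it contains $y_\star$), hence has positive measure. The remaining steps are routine.
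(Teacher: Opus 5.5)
Your proof is correct and is essentially the paper's argument: the paper chooses points $y_k$ within $1/k$ of $y_\star$, with $h(y_k)$ within $1/k$ of $u$ and lying outside the exceptional null set, and then passes to the limit using continuity of $\Psi$, which is the sequential form of your neighborhood-and-contradiction argument. Your explicit check that $u \in X$, so that $\Psi(y_\star, u)$ is defined, fills in a point the paper leaves implicit.
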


\begin{proof}
  There exists a set $\mN \subseteq \Yopen$ of measure zero such that
  \begin{equation}
    \text{$h(y) \in X$ and $\Psi\bigl(y, h(y)\bigr) = 0$
    for all $y \in \Yopen\,\setminus\,\mN$.}
    \label{eq:identity_off_N}
  \end{equation}
  For each integer $k \ge 1$, define
  \begin{equation}
    E_k := \{\, y \in \Yopen \cap B_{1/k}(y_\star) \,:\, \norm{h(y) -
    u} < 1/k \,\}.
  \end{equation}
  Since $u \in \Aessim(h; y_\star)$,
  the set $E_k$ has positive measure,
  so that $E_k\,\setminus\,\mN$ is nonempty.
  Choose $y_k \in E_k\,\setminus\,\mN$.
  Then $y_k \to y_\star$ and $h(y_k) \to u$ as $k \to \infty$.
  By Eq.~\eqref{eq:identity_off_N},
  \begin{equation}
    \Psi\bigl(y_k, h(y_k)\bigr) = 0
  \end{equation}
  for all $k \ge 1$.
  Taking $k \to \infty$ and using continuity of $\Psi$ yields
  $\Psi(y_\star, u)=0$.
\end{proof}

\bibliographystyle{amsplain}
\bibliography{biblio}

\end{document}